\newtheorem{thm}{Theorem}[section]
\newtheorem{lem}[thm]{Lemma}
\newtheorem{prop}[thm]{Proposition}
\newtheorem{definition}[thm]{Definition}
\def\ism {\mu_{\epsilon h}^+}     
\def\isn {\mu_{0}^{+}}            
\def\parz{\mathcal{Z}^+_{\phi}(\epsilon h)}    
\def\parf#1{\mathcal{Z}^+_{\phi}(\epsilon {#1})}  
\def \part{\mathcal{Z}^+_{\phi}}
\def \fkh#1{\phi_p^{{#1},\epsilon h}}
\def \vfkh#1{\varphi_p^{{#1},\epsilon h}}
\def \fkf#1#2{\phi_p^{{#1},\epsilon {#2}}}
\def \vfkf#1#2{\varphi_p^{{#1},\epsilon {#2}}}
\def \fk#1{\phi_p^{#1,0}}
\def \vfk#1{\varphi_p^{#1,0}}
\newcommand{\lamn}{\Lambda_N}                     
\newcommand{\eps}{\epsilon}
\def\mbb#1{\mathbb{#1}}
\def\mcc#1{\mathcal{#1}}
\def\pari{\partial_{\mathrm i}}
\def\pare{\partial_{\mathrm e}}
\begin{document}

	\title{Long range order for three-dimensional random field Ising model throughout the entire low temperature regime}
\author{Jian Ding  \\ Peking University \and Yu Liu  \\ Peking University \and Aoteng Xia \\ Peking University}

\maketitle

\begin{abstract}
For $d\geq 3$, we study the Ising model on $\mathbb Z^d$ with random field given by $\{\epsilon h_v: v\in \mathbb Z^d\}$ where $h_v$'s are independent normal variables with mean 0 and variance 1. We show that for any $T < T_c$ (here $T_c$ is the critical temperature without disorder), long range order exists as long as $\epsilon$ is sufficiently small depending on $T$. Our work extends previous results of Imbrie (1985) and Bricmont--Kupiainen (1988) from the very low temperature regime to the entire low temperature regime.
\end{abstract}

\section{Introduction}

For $d\geq 3$, we consider the $d$-dimensional lattice $\mathbb Z^d$ where $u, v\in \mathbb Z^d$ are adjacent (and we write $u\sim v$) if their $\ell_1$-distance is 1.
For $N\geq 1$, let $\Lambda_N=[-N,N]^d \cap \mathbb Z^d$ be the box of side length $2N$ centered at the origin $o$. For $v\in \mathbb Z^d$, let $h_v$'s be independent normal variables with mean 0 and variance 1 (we denote by $\mathbb P$ and $\mathbb E$ the measure and expectation with respect to  $h=\{h_v\}_{v\in \mathbb Z^d}$, respectively). For $\epsilon \geq 0$, we define the random field Ising model (RFIM) Hamiltonian $H^{\pm, \Lambda_N, \epsilon h}$ with the plus (respectively minus) boundary condition and the external field  $\eps h=\{ \epsilon h_v :v \in \mathbb{Z}^d \}$ by
\begin{equation}
\label{def_h}
H^{\pm, \Lambda_N, \epsilon h} (\sigma)= - \left(\sum_{u,v \in \Lambda_N,\, u\sim v} \sigma_u \sigma_v \pm \sum_{ u \in \Lambda_N, v\in \Lambda_N^c,\, u\sim v} \sigma_u + \sum_{u \in \Lambda_N} \epsilon h_u \sigma_u\right),
\end{equation}
where $\sigma \in \{-1, 1\}^{\Lambda_N}$.
For $T>0$, we define $\mu^{\pm}_{T, \Lambda_N, \epsilon h} $ to be the Gibbs measure on $\{-1,1\}^{\Lambda_N}$ at temperature $T$ by
\begin{equation}
\label{def_mu}
\mu^{\pm}_{T, \Lambda_N, \epsilon h} (\sigma)= \frac{1}{\mathcal{Z}^\pm_{T, \Lambda_N,\mu} (\epsilon h)} e^{-\frac{1}{T} H^{\pm, \Lambda_N, \epsilon h} (\sigma)},
\end{equation}
where (again) $\sigma\in \{-1, 1\}^{\Lambda_N}$ and $\mathcal{Z}^\pm_{T, \Lambda_N,\mu}( \epsilon h)$ is the partition function given by
\begin{equation}
\label{def_z}
\mathcal{Z}^\pm_{T, \Lambda_N,\mu}( \epsilon h) = \sum_{\sigma \in \{-1,1\}^{\Lambda_N}} e^{-\frac{1}{T} H^{\pm, \Lambda_N, \epsilon h} (\sigma)}.
\end{equation}
(Note that $\mu^\pm_{T, \Lambda_N, \epsilon h}$ and $\mathcal Z^\pm_{T, \Lambda_N,\mu} (\epsilon h)$ are random variables depending on  the external field $\eps h $.) Similarly, we denote the measure without disorder by $\mu^{\pm}_{T, \Lambda_N,0}$. Throughout the paper, we let $T_c = T_c(d)$ be the critical temperature for the Ising model without disorder and we consider $T<T_c$. A key quantity of interest in this paper is the boundary influence, defined as
$$
m_{T,\Lambda_N, \epsilon h} = \mu^{+}_{T, \Lambda_N, \epsilon h}(\sigma_o = 1) - \mu^-_{T, \Lambda_N, \epsilon h}(\sigma_o = 1)\,.
$$
Our main theorem concerns the existence of long range order, that is, the boundary influence $m_{T,\Lambda_N, \epsilon h}$ stays above a positive constant as $N\to \infty$ (for a typical instance of the disorder).
\begin{thm}\label{thm-main}
For any $d\geq 3,$  $ T< T_c$ and any $\gamma>0$, there exists $\epsilon_ 0 = \epsilon_0(d, T, \gamma)$ such that for all $\epsilon \leq \epsilon_0$ we have
$$\mbb P(m_{T,\Lambda_N, \epsilon h} \geq \epsilon_0) \geq 1 - \gamma \mbox{ for all } N \geq 1\,.$$
\end{thm}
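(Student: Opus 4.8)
The plan is to establish long-range order via a contour (Peierls-type) argument that works uniformly over the entire low-temperature regime $T<T_c$, using a multi-scale/renormalization-group analysis in the spirit of Imbrie and Bricmont--Kupiainen but crucially replacing their ``very low temperature'' expansion inputs with quantitative surface-tension estimates valid up to $T_c$. The starting point is the standard random-field/ground-state duality: by monotonicity in boundary conditions (FKG), it suffices to control a single measure, say $\mu^+_{T,\Lambda_N,\epsilon h}$, and show $\mu^+_{T,\Lambda_N,\epsilon h}(\sigma_o=-1)$ is small with high probability; the difference $m_{T,\Lambda_N,\epsilon h}$ is then bounded below by a constant. We represent the event $\{\sigma_o=-1\}$ in terms of the existence of a contour in the Ising (or random-cluster) representation separating $o$ from the plus boundary, so the task reduces to a quenched bound on the probability (under the Gibbs measure, then averaged/controlled under $\mathbb P$) that such a contour occurs.

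First I would set up the coarse-graining: partition $\mathbb Z^d$ into blocks at a hierarchy of scales $L_k$, and on each block declare it ``good'' or ``bad'' according to whether the local disorder field, suitably averaged, is atypically large; because $h_v$ are i.i.d.\ Gaussians, bad blocks at scale $L_k$ occur with probability $\le \exp(-c\, \epsilon^{-2} L_k^{d-2} \cdot \text{something})$-type bounds coming from the fact that the energetic cost of a contour of surface area $A$ is $\gtrsim \tau(T) A$ (surface tension, positive and bounded below for $T<T_c$ by the results we may cite) while the disorder can gain at most $\epsilon \sqrt{\sum h_v^2}$ over the enclosed volume, and by a standard isoperimetric/chaining estimate the net cost of a contour surrounding $o$ is positive unless it passes through many bad blocks. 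Then I would run the renormalization recursion à la Bricmont--Kupiainen: show that the density of bad blocks contracts from scale to scale, so that with $\mathbb P$-probability $\ge 1-\gamma$ the origin is not surrounded by a ``bad'' region at any scale, and on that event the Peierls sum over contours through $o$ converges and is $<1/2$.

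**The key steps, in order:** (i) reduce via FKG/symmetry to a one-sided contour estimate for $\mu^+$; (ii) set up the contour representation and the energy--entropy balance, importing the uniform lower bound on surface tension $\tau(T)>0$ for all $T<T_c$ (this is the substitute for the small-$\beta$ cluster expansion and is where the ``entire low temperature regime'' gain comes from); (iii) define the multi-scale good/bad decomposition of blocks with Gaussian tail bounds on badness; (iv) prove the RG contraction estimate showing the bad density decays doubly-exponentially in the scale index; (v) on the high-probability event that $o$ sits in an all-scales-good region, execute the Peierls summation to conclude $\mu^+(\sigma_o=-1)\le 1/4$, hence $m\ge 1/2$, uniformly in $N$; (vi) choose $\epsilon_0=\epsilon_0(d,T,\gamma)$ small enough that the Gaussian badness probabilities feed the RG and that the final bound beats $\gamma$.

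**The main obstacle** I expect is step (iv), the renormalization contraction, carried out with constants that depend only on $d$ and on the (positive but possibly small) surface tension $\tau(T)$ rather than on a small coupling: near $T_c$ the surface tension degenerates, so one must track carefully how the Peierls/isoperimetric constants scale with $\tau(T)$ and ensure the RG still closes for any fixed $T<T_c$ once $\epsilon$ is taken small relative to $\tau(T)$. A secondary difficulty is controlling long, thin contours (the anisotropic or ``fractal'' contours that are cheap energetically) — this is precisely where the multi-scale analysis, as opposed to a naive single-scale Peierls bound, is indispensable, and where the interplay between the deterministic surface cost and the stochastic disorder gain must be handled with care (e.g.\ via a concentration/chaining bound on $\sup$ over contours of the disorder functional).
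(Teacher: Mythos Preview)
Your proposal has a concrete quantitative error and a deeper strategic gap.

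\textbf{The quantitative error.} In step (v) you aim to conclude $\mu^+_{T,\Lambda_N,\epsilon h}(\sigma_o=-1)\le 1/4$, hence $m\ge 1/2$. But this is impossible for $T$ close to $T_c$: even at $\epsilon=0$ the spontaneous magnetization tends to zero as $T\uparrow T_c$, so $\mu^+_{T,\Lambda_N,0}(\sigma_o=-1)\to 1/2$. No Peierls sum can beat the zero-field value. The theorem only promises $m\ge \epsilon_0(d,T,\gamma)$, with $\epsilon_0$ allowed to vanish as $T\uparrow T_c$. This forces the argument to be a \emph{comparison} with the zero-field measure rather than an absolute bound, and that is precisely the route the paper takes: it proves $\mu^+_{\epsilon h}(\sigma_o=-1)\le e^{\theta_1}\mu^+_0(\sigma_o=-1)+\theta_2$ with $\theta_1,\theta_2$ arbitrarily small, then uses only that $\mu^+_0(\sigma_o=-1)<1/2$.

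\textbf{The strategic gap.} Your contour argument is not specified, and neither natural choice works as stated. Spin contours (Peierls walls) are expected to fail near $T_c$ because minus spins may percolate under the plus boundary condition (the paper cites this conjecture explicitly), so there need not be a finite contour around $o$. If you instead mean FK-cluster boundaries, the paper points out that the boundary of the FK cluster of $o$ does \emph{not} have exponential tails in its own size near $T_c$ (it gives an explicit $e^{-O(N^2)}$ event producing boundary of order $N^3$), so a naive Peierls sum over such boundaries diverges. Your step (ii), ``import the surface tension $\tau(T)>0$'', is the crux and is left as a black box; converting positivity of surface tension into usable contour-probability bounds is exactly what Pisztora's coarse-graining accomplishes, and it lives in the FK representation with ``good boxes'' defined by FK connectivity, not by atypicality of the disorder as in your step (iii). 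Finally, you propose to run the Bricmont--Kupiainen RG with surface-tension inputs in place of low-temperature expansion; the paper explicitly identifies this as the approach that appears very difficult to push to all $T<T_c$, and avoids it entirely.

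\textbf{What the paper does instead.} It works in the FK representation, coarse-grains using Pisztora good boxes (a property of the FK configuration, independent of disorder), and defines an ``outmost blue boundary'' around the FK cluster of $o$ that is by construction surrounded by layers of good boxes. The exponential decay needed for the Peierls-type sum comes from \eqref{goodboxprobability}, valid for all $T<T_c$ by Bodineau's $\hat T_c=T_c$. The comparison between $\mu^+_{\epsilon h}$ and $\mu^+_0$ is then carried out by (i) flipping disorder in enclosed regions as in Ding--Zhuang, (ii) showing via the good-box shielding that the influence of disorder on the relevant conditional FK probabilities is confined to a thin shell of volume $O(L)$, and (iii) using ratio spatial mixing for FK-Ising (Duminil-Copin--Goswami--Raoufi plus Alexander, applied through a random-partition trick) to decouple boundary conditions. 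None of these ingredients appears in your outline.
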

There is a long history for the study of long range order for the random field Ising model.
With strong disorder, i.e., when $\epsilon$ is large, this is relatively an easy question. It was shown in \cite{Ber85, FI84, CJN18} (see also \cite[Appendix A]{AP19}) that for any $d\geq 2$, the boundary influence decays exponentially in $N$ (so in particular there exists no long range order) at any temperature as long as $\epsilon = \epsilon (d)$ is large enough. On the contrary, the question becomes substantially more challenging with weak disorder, i.e., when $\epsilon$ is small. There is a \emph{vast} literature on the random field Ising model in physics including both numeric studies and non-rigorous derivations. Among these, the most influential physics prediction is arguably due to \cite{IM75}, which predicts that long range order exists at low temperatures for $d\geq 3$ but not for $d=2$. It would not be possible to provide a complete list on relevant literature in physics, and a partial list on the random field Ising model for $d\geq 3$ with some emphasis on ``transition in temperature'' (and thus most relevant to Theorem~\ref{thm-main}) includes \cite{GM83, YN85, PF85, HFBT93, BFHHRT95}. While we feel that Theorem~\ref{thm-main} should have been speculated by people, we are unable to locate an explicit and precise reference.

Somewhat interestingly, for the random field Ising model it seems that it has happened for multiple times that debates among physicists were solved/clarified with the help of rigorous work from mathematicians. For instance, there has been controversy over the prediction of \cite{IM75} for quite some time, and it was finally proved to be correct by \cite{Imbrie85, BK88} (see also \cite[Chapter 7]{bovier06} and \cite{DZ21}) for $d\geq 3$ and by \cite{AW90} for $d=2$ (see also \cite{Chatterjee18, AP19, DX21, AHP20}). It is also worth noting that for $d=2$ we now have a fairly good quantitative understanding including exponential decay \cite{DX21, AHP20} and the correct scaling for (a notion of) the correlation length \cite{DW20}, over both of which physicists seemed to have debates for a long time.

It seems that until very recently, in the study (at least mathematical study) of the random field Ising model,  the high temperature regime refers to the case when $T$ is very large and the low temperature regime refers to the case when $T$ is sufficiently small. In the direction of proving correlation decay,  a  recent new correlation inequality for the Ising model \cite{DSS22}, combined with classic results for exponential decay of the Ising model without disorder at $T>T_c$ \cite{ABF87} (see also \cite{DTV18, DRT19}), implies exponential decay for the RFIM as long as $T > T_c$. In the direction of proving long range order, the method in the classic papers \cite{Imbrie85, BK88} is based on a sophisticated scheme of renormalization group theory, and as a result it seems very difficult (if possible at all) to extend to the entire low temperature regime. The starting point of our proof is the extension of the Peierls argument in \cite{DZ21}, but it is fair to say that one has to conquer major obstacles in order to prove long range order for $T<T_c$ even assuming familiarity of \cite{DZ21}. Extending \cite{DZ21}, our proof employs a Peierls type of \emph{equality} as opposed to an \emph{inequality} when Peierls type of argument (initiated in \cite{Peierls1936}) was employed in all previous works to our best knowledge.
As a result, our proof
 combines many ingredients such as the greedy lattice animal \cite{Chalker83, FFS84} (which already appeared in \cite{DZ21}), the connection to FK-Ising model \cite{FK72, ES88} (see also \cite{Grimmett06}), the coarse graining method \cite{Pis96, Bodineau05}, as well as the spatial mixing property for the FK-Ising model \cite{DCGR20, Alexander98} and a random partitioning method from \cite{CKR05} (which allows us to apply the spatial mixing property in an effective manner). We feel that our proof approach is non-trivial and at least somewhat novel. As a result, we present a high-level overview in Section~\ref{sec:outline-and-prelim} emphasizing the importance of the FK-Ising model and the coarse graining method in our proof outline, and then we present a detailed discussion on our proof outline at the beginning of Section~\ref{sec:proof-sketch}.

\section{Proof overview and preliminaries} \label{sec:outline-and-prelim}

In this section, we present an overview discussion for our proof strategy, from which we see that the FK-Ising model (as reviewed in Section~\ref{sec:coupling}) and the coarse graining method (as reviewed in Section~\ref{sec-coarse-graining-framework}) are of fundamental importance to our proof.

\subsection{Overview of our proof}\label{sec:proof-outline}

In what follows we may drop the subscripts $T$ and region $\lamn$ in the context of no ambiguity. For example, $\mu^{\pm}_{T, \Lambda_N, \epsilon h}$ will be abbreviated to $\mu^{\pm}_{\epsilon h}$ and similarly for $\mu^{\pm}_{T, \Lambda_N, 0}$ and $\mu^{\pm}_0$. When we wish to emphasize the underlying region for the Ising/FK-Ising model, we simply add it to the subscript. For instance, we denote by $\phi^{\xi,0}_{p,U}$ the FK-Ising model on $U$ with boundary condition $\xi$ and no external field.
Our goal is to show that for any fixed $T<T_c$, long range order exists for sufficiently small $\epsilon$ which may depend on $T$, as in Theorem~\ref{thm-main}. It suffices to prove the following theorem. 
	\begin{thm}\label{thm-main-reformulation}
        For any $d\geq 3$ and $T< T_c$ and any $\gamma>0$, there exists $\epsilon_ 0 = \epsilon_0(d, T, \gamma)$ such that for all $\epsilon \leq \epsilon_0$ we have
        $$\mbb P(\ism
        (\sigma_o=-1) \leq 1/2 - \epsilon_0) \geq 1 - \gamma \mbox{ for all } N \geq 1\,.$$
    \end{thm}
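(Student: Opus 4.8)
The plan is to reduce the statement to a Peierls-type estimate on the probability, under $\ism$, that the origin is separated from the boundary by a large contour of minus-spins, and then to control this probability uniformly in $N$ (and for a typical field) by exploiting the FK-Ising representation together with coarse graining. The starting point is the observation that $\ism(\sigma_o = -1)$ differs from $\tfrac12$ only because of the influence of the plus boundary condition: if $\sigma_o = -1$ then, in the Ising configuration with the plus boundary, there must be a $*$-connected surface (a Peierls contour) enclosing $o$ on which neighboring spins disagree with the outside, and so $\ism(\sigma_o = -1) \le \sum_{\Gamma \ni o} \ism(\Gamma \text{ is a contour})$, the sum being over contours surrounding $o$. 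The task is therefore to show that, with $\mathbb P$-probability at least $1-\gamma$, this sum is at most $\tfrac12 - \epsilon_0$ for all $N$.

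First I would import the Peierls-type \emph{equality} from \cite{DZ21}: rather than bounding the weight of a contour $\Gamma$ by the crude factor $e^{-c|\Gamma|/T}$, one writes the contour weight exactly as a ratio of partition functions on the inside and outside of $\Gamma$, which produces an honest gain from the field contribution (the sum of $\epsilon h_v$ over the flipped region) that can be analyzed via the greedy lattice animal bound \cite{Chalker83, FFS84}: the total field energy collected over any lattice animal of size $n$ is at most $C\epsilon n$ with overwhelming probability, uniformly over animals and over $n$. The subtle point is that at $T$ close to $T_c$ the naive surface-tension gain $e^{-c|\Gamma|/T}$ is \emph{not} summable over contours with a constant that beats $\tfrac12$, so the crude Peierls bound fails; this is exactly why we must pass to the FK-Ising model. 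Via the Edwards--Sokal coupling \cite{ES88, FK72}, the event $\{\sigma_o = -1\}$ under $\ism$ translates into the event that $o$ is \emph{not} connected to the wired boundary in the FK-Ising configuration, and a contour of minus-spins around $o$ corresponds to a dual circuit (in $d=2$) or dual surface separating $o$ from $\partial\Lambda_N$. The key structural input is that for $p$ corresponding to $T<T_c$ the FK-Ising model without field on $\mathbb Z^d$ is in its supercritical/ordered phase, so that such a separating dual surface of linear size $L$ has probability at most $e^{-c(T)L}$ — a bound that holds with a \emph{genuinely small} constant relative to the combinatorial entropy of surfaces once one coarse-grains.

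The heart of the argument is then the coarse graining step \cite{Pis96, Bodineau05}: partition $\Lambda_N$ into blocks of a large fixed side length $\ell = \ell(T)$, declare a block ``good'' if the FK-Ising configuration restricted to a neighborhood of that block looks like the ordered phase, and show that the good blocks percolate across annuli surrounding $o$ with high probability. Here I would bring in the spatial mixing (ratio weak mixing) property of the FK-Ising model \cite{DCGR20, Alexander98} to decouple the status of distant blocks, and the random partitioning method of \cite{CKR05} to apply this mixing in an effective, union-bound-friendly way despite the lack of finite-energy/independence in the FK measure. Because the field is small, one shows that adding $\epsilon h$ perturbs the block-goodness probabilities by at most $o(1)$ as $\epsilon \to 0$ (again using the greedy-animal control on the field energy to absorb the Radon--Nikodym factor between $\isf{}$ and $\isn{}$ on each block's neighborhood), so the coarse-grained percolation picture survives the disorder for $\epsilon \le \epsilon_0(d,T,\gamma)$. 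Combining: on the high-probability event for $h$, any minus-contour around $o$ must cross a circuit of good blocks, which forces the contour to be ``long'' in coarse-grained units, and the coarse-grained Peierls sum is then controlled by $e^{-c(T)\ell} \cdot (\text{entropy})$, which is $\le \tfrac12 - \epsilon_0$ once $\ell$ is taken large.

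The main obstacle I anticipate is precisely the interaction between the renormalization/coarse-graining and the disorder: one needs the coarse-grained ``good block'' event to be (i) defined only in terms of the field-free FK-Ising model on a bounded region, so that spatial mixing applies, yet (ii) robust enough that the true (field-perturbed) FK-Ising measure still has almost all blocks good, \emph{and} (iii) compatible with the Peierls equality so that the contour weights are genuinely suppressed. Making these three requirements simultaneously consistent — in particular choosing the block scale $\ell$, the mixing scale, and the field threshold $\epsilon_0$ in the right order, and handling the fact that the field contribution inside a contour is a sum over a region whose geometry is itself random — is where the real work lies, and is where I expect the detailed argument in the subsequent sections to be concentrated.
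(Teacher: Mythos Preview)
Your sketch assembles the right ingredients (FK representation, coarse graining \`a la \cite{Pis96,Bodineau05}, spatial mixing from \cite{DCGR20,Alexander98}, the greedy lattice animal control on the field, and the CKR partitioning), and these are indeed the tools the paper uses. But there is a genuine structural gap in how you propose to put them together.

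You aim to bound $\ism(\sigma_o=-1)$ directly by a coarse-grained Peierls sum and to push that sum below $\tfrac12-\epsilon_0$ by taking the block scale large. This cannot work as stated. The coarse-grained contour sum only controls the contribution from configurations where the FK-cluster of the origin (or its coarse-grained envelope) is \emph{large}; that contribution can indeed be made $o(1)$ as $\mathsf q\to\infty$. But the dominant contribution to $\ism(\sigma_o=-1)$ comes from the event that $\mathcal C_o$ is \emph{small} (contained in a single block), and that contribution is of order $\isn(\sigma_o=-1)$, which for $T$ just below $T_c$ is only slightly less than $\tfrac12$. No Peierls-type sum will squeeze this below $\tfrac12-\epsilon_0$; the surface tension vanishes as $T\uparrow T_c$.

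The paper's actual proof of the theorem is therefore organized differently: it is a one-line reduction to Proposition~\ref{prop:main-thm-restatement}, which asserts that with high $\mathbb P$-probability
\[
\ism(\sigma_o=-1)\ \le\ e^{\theta_1}\,\isn(\sigma_o=-1)+\theta_2,
\]
and then one simply invokes the black-box fact $\isn(\sigma_o=-1)<\tfrac12$ (this \emph{is} what $T<T_c$ means). All of the machinery you list is deployed to prove this \emph{comparison} between the disordered and field-free measures, not to bound the disordered measure in isolation. In particular the small-cluster case (Proposition~\ref{prop-compare-small-L}) is handled by showing the disorder perturbs that probability only multiplicatively by $e^{\theta_1}$, while the large-cluster case (Proposition~\ref{prop-large-surface-estimation}) allows an exponential loss $e^{cL}$ which is then absorbed by the coarse-grained decay $e^{-b\mathsf q L}$ from Proposition~\ref{prop:coarse-graining}. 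Your sketch has the latter mechanism but not the former, and without the comparison framework you have no way to handle the typical (small-cluster) contribution.
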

    For convenience, we will write the proof for $d = 3$ and the adaption to $d > 3$ is verbatim. 
    In order to prove Theorem~\ref{thm-main-reformulation}, the very basic idea of our proof lies in the classic Peierls argument. The first challenge in applying the Peierls argument to the random field Ising model is that the disorder has non-negligible influence on the change of Hamiltonian. This challenge was addressed in \cite{DZ21} where the Peierls mapping was extended to the joint space of Ising configurations and the random external field; in particular, the signs for disorder in the simply connected set enclosed by the outmost boundary for the sign cluster of the origin were flipped together with the Ising spins therein, while the change of partition function from the flipping procedure is $e^{\delta_\epsilon L}$ with $\delta_\epsilon \to_{\epsilon \rightarrow 0 }0$ (here $L$ is the size for the outmost boundary). 

In \cite{DZ21}, $T$ is sufficiently small so that the gain from flipping the Ising spins is very large and easily absorbs multiplicity of the Peierls mapping. In the contrast, in this work $T$ can be just slightly below $T_c$, and as a result a much more refined analysis is required.
    It seems the only plausible route here is to use $\isn(\sigma_o = 1) > 1/2$ for $T < T_c$ (which is essentially the definition of $T_c$) and to make comparisons between what happens along the Peierls mapping for the Ising model without and with external field. This is incorporated in the following proposition.
\begin{prop}\label{prop:main-thm-restatement}
		For any $ T < T_c$ and any $\theta_1,\theta_2, \gamma>0$, there exists  $\eps_0=\eps_0(\gamma, \theta_1,\theta_2)>0$ such that for any $\eps<\eps_0$
		we have that with $\mbb P$-probability at least $1 -\gamma$
		$$\ism (\sigma_o=-1)\leq e^{\theta_1}\isn(\sigma_o=-1)+\theta_2\,.$$
	\end{prop}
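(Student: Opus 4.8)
The plan is to set up a Peierls-type coupling between the $\epsilon h$-system and the $0$-system, conditioned on the sign cluster of the origin, and then to transfer control of the zero-field model $\mu_0^+$ to the disordered model $\mu_{\epsilon h}^+$ at the level of the event $\{\sigma_o = -1\}$. Concretely, on the event $\{\sigma_o=-1\}$ there is an outermost contour $\Gamma$ (the exterior boundary of the $(-1)$-sign cluster of $o$) separating $o$ from the plus boundary condition; let $L = |\Gamma|$ and let $\mathrm{Int}(\Gamma)$ be the simply connected region it encloses. Following the extension of the Peierls map in \cite{DZ21}, I would flip both the Ising spins and the disorder signs inside $\mathrm{Int}(\Gamma)$; the point is that this map is measure-preserving on the disorder and, crucially, changes the (quenched) partition function only by a multiplicative factor $e^{\delta_\epsilon L}$ with $\delta_\epsilon \to 0$ as $\epsilon\to 0$ — this is exactly the ``Peierls equality'' alluded to in the introduction, and it is the input I would lean on most heavily. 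Summing over $\Gamma$ this yields, schematically,
$$
\ism(\sigma_o=-1) \;\leq\; \sum_{\Gamma \ni o \text{ inside}} e^{\delta_\epsilon |\Gamma|}\, \mu^{+}_{\epsilon h}\big(\Gamma \text{ is a contour of } \sigma, \ \sigma \equiv +1 \text{ just outside } \Gamma\big)^{\text{(flipped weight)}},
$$
and the goal is to compare the right-hand side with the analogous (contour) expansion of $\isn(\sigma_o=-1)$.

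The key step is then to pass from a \emph{fixed}-$\epsilon h$ statement to a $\mathbb P$-typical statement by absorbing the factor $e^{\delta_\epsilon L}$. Here I would split according to the size $L$ of the outermost contour. For small $L$ (say $L \leq C_\gamma$ for a large constant depending on $\gamma, \theta_1$), the factor $e^{\delta_\epsilon L} \leq e^{\delta_\epsilon C_\gamma} \leq e^{\theta_1}$ once $\epsilon$ is small, so the contribution of small contours is bounded by $e^{\theta_1} \isn(\sigma_o = -1)$ directly, provided one also controls the discrepancy between the contour weights with and without the external field — and for a contour of bounded size the field inside is a sum of boundedly many Gaussians, so with high probability it is small and contributes a further $e^{o(1)}$ factor (this is where one needs a union bound over the finitely many small shapes, and where $\gamma$ enters). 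For large $L$, I would instead show the total contribution of all contours of size $\geq C_\gamma$ is at most $\theta_2$ with probability $\geq 1-\gamma$: here the zero-field contour sum $\sum_{|\Gamma|\geq C_\gamma} (\text{weight}) $ is already exponentially small in $C_\gamma$ by the standard Peierls bound at $T<T_c$ (using the FK representation / coarse graining to get a clean exponential bound valid up to $T_c$), and one must check that the disorder, together with the $e^{\delta_\epsilon L}$ factor, does not destroy this — the greedy lattice animal estimate of \cite{Chalker83, FFS84} (as used in \cite{DZ21}) bounds $\sum_{v\in \mathrm{Int}(\Gamma)} \epsilon h_v$ by something like $\epsilon \cdot o(L) + O(L)$ on a high-probability event, so that $e^{\delta_\epsilon L}$ is genuinely subexponential relative to the Peierls decay.

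I would expect the main obstacle to be precisely this large-$L$ estimate at temperatures close to $T_c$: the naive Peierls bound loses exponentially in $L$ with a rate that degenerates as $T\uparrow T_c$, so one cannot afford to be wasteful, and in particular one needs the \emph{sharp} exponential decay of the surface tension / contour weight that comes from the FK-Ising coarse graining (\cite{Pis96, Bodineau05}) together with the spatial mixing property (\cite{DCGR20, Alexander98}) and the random partitioning of \cite{CKR05} to decouple the quenched contour weights on different regions. Balancing $\delta_\epsilon$ (which I can make as small as I like by shrinking $\epsilon$, but which is \emph{fixed} before $N\to\infty$) against a $T$-dependent but $\epsilon$-independent Peierls rate is the delicate accounting. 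A secondary technical point is that the ``weight'' appearing after the flip is not literally a probability but a ratio of partition functions restricted to the exterior of $\Gamma$, so to compare with the zero-field model cleanly one has to run the same contour surgery on $\mu_0^+$ and match terms contour-by-contour; making this matching precise — rather than merely bounding one sum by another — is what makes the statement an inequality of the stated form $e^{\theta_1}\isn(\sigma_o=-1) + \theta_2$ rather than something weaker.
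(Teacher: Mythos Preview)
Your overall shape (split by contour size, flip disorder together with spins, control partition function ratio by $e^{\delta_\epsilon L}$, absorb small $L$ into the $e^{\theta_1}$ factor and large $L$ into the $\theta_2$ term) matches the paper's strategy in spirit, but there is a genuine gap in the choice of the object $\Gamma$.

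You take $\Gamma$ to be the outermost boundary of the \emph{minus spin cluster} of $o$, and then assert that the zero-field contour sum over $|\Gamma|\ge C_\gamma$ is exponentially small ``by the standard Peierls bound at $T<T_c$ (using the FK representation / coarse graining).'' This is precisely the step that fails for $T$ close to $T_c$. As the paper emphasizes (Section~\ref{sec:proof-outline}), it is conjectured that in a near-supercritical regime the minus spins \emph{percolate} under the plus boundary condition; on that event the spin contour has no decay in $L$ at all, let alone an exponential one. The coarse-graining results of \cite{Pis96,Bodineau05} you invoke are statements about \emph{FK-clusters}, not spin clusters, and do not transfer to the boundary of a sign cluster. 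So the large-$L$ half of your argument has no input to feed on.

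The paper's remedy is to work from the start with the Edward--Sokal coupling and the FK-cluster $\mathcal C_o$ of the origin (equation~\eqref{eq:expressionstep1}), and then to coarse-grain \emph{that}: one defines good/bad boxes at scale $\mathsf q$, introduces auxiliary Bernoulli variables to get blue/red vertices, and takes the ``outmost blue boundary'' $\mathsf B$ (together with its $2\mathsf k$-connected extension $\mathsf B'$) as the analogue of your $\Gamma$. The exponential decay (Proposition~\ref{prop:coarse-graining}) is then in $|\mathsf B\cup\mathsf B'|$, not in any spin-contour length, and the rate $b\mathsf q$ can be made large by increasing $\mathsf q$. The reason the construction insists on surrounding $\mathsf B\cup\mathsf B'$ by layers $\mathsf S_1,\mathsf S_2$ of good boxes is exactly to localize the influence of the disorder to a thin shell (Proposition~\ref{prop:inside-M-event-h-and-no-h}); this is what replaces your informal ``the field inside is a sum of boundedly many Gaussians'' in the large-$L$ regime, where the enclosed volume is not bounded. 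Your sketch touches the right ingredients (FK, coarse graining, spatial mixing, random partitioning) but applies them to the wrong geometric object; once you switch from the spin contour to the coarse-grained FK boundary, the pieces fit as in Section~\ref{sec:proof-sketch}.
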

 
\begin{proof}[Proof of Theorem~\ref{thm-main-reformulation} assuming Proposition~\ref{prop:main-thm-restatement}]	Since $\isn(\sigma_o=-1) < 1/2$ for $T<T_c$, we can take $\theta_1, \theta_2$ small enough so that $e^{\theta_1}\isn(\sigma=-1)+\theta_2 < \frac{1}{2} - \delta$ for some  $\delta = \delta(T)>0$. Combined with Proposition~\ref{prop:main-thm-restatement}, this proves Theorem~\ref{thm-main-reformulation}.
 \end{proof}
We wish to prove Proposition~\ref{prop:main-thm-restatement} by a Peierls type of argument. With a moment of thinking, one can be convinced that in order for the Peierls argument to succeed in a plausible way, it is \emph{essentially} necessary that the ``cluster'' flipped in the Peierls mapping exhibits an exponential decay in its surface area. This requirement indeed holds for previous applications of the Peierls argument since (as far as we know) previously the Peierls argument was applied at sufficiently low temperatures where the minus spin cluster under the plus boundary condition does exhibit the required exponential decay (it is also important that the rate in the exponential decay is large for small $T$).  However, this may not be the case when $T$ is slightly below $T_c$: it was conjectured that there exists a near-supercritical regime where  minus spins percolate under the plus boundary-condition \cite{BFL82}. Assuming this conjecture, we will have no decay for the size of the minus cluster, let alone an exponential decay.
 
In light of this, a natural alternative is to work with the FK-Ising cluster which does have a decaying probability for the FK-cluster of the origin to be large but disconnected from the boundary \cite{Pis96, Bodineau05}. For our analysis later, it would be ideal that this decay rate is exponential in the boundary size of the cluster (at least in the size for the outmost boundary). Unfortunately, this is not the case when $T$ is near $T_c$: with probability $e^{-O(N^2)}$ all edges on the boundary of $\Lambda_{N/2}$ are closed, and given this event with non-vanishing probability the cluster of the origin has volume of order $N^3$ and also has boundary size of order $N^3$ (it is expected that when $T$ is near $T_c$ the size of the outmost boundary should also be of order $N^3$). In order to address this, we apply the coarse graining method where we consider the cluster in a lower resolution: roughly speaking, we consider a box of size $\mathsf q$ as occupied by the cluster as long as it intersects with the cluster. We will review coarse graining method in Section~\ref{sec-coarse-graining-framework}, which is of fundamental importance to our analysis. 

Even with the aforementioned coarse graining, it is plausible that the rate for the exponential decay approaches 0 when  $T \uparrow T_c$, and as a result still there is very little room to spare in our comparison of measures for events concerning a component with boundary size $L$: we need to show that the ratio between the measures with and without the external field is within an $e^{\delta L}$ multiplicative factor (where $\delta$ is sufficiently small depending on $T$). It is then perhaps not surprising that such high precision comparison incurs substantial challenge. The main underlying intuition is that, the main influence from the disorder in the aforementioned comparison comes from a thin shell around the outmost boundary (and thus has volume of order $L$). In order to make this intuition rigorous, we have to slightly modify the definition of the cluster when constructing the Peierls mapping so that this thin shell is surrounded by good boxes of size $\mathsf q$ which will ``shield'' the influence of the disorder (see Section~\ref{sec-coarse-graining-framework} for the definition of a `good' box).  The proof is incorporated in Section~\ref{sec:proof-sketch} where we set up the framework and present the flow of our proof. Two main ingredients, including the spatial mixing and the actual comparison of the measures restricted in the aforementioned thin shell, are postponed until later sections.

In the subsequent two subsections, we review the FK-Ising model (and the coupling between the FK-Ising model and the Ising model) and we also review the framework of the coarse graining method in the context of the FK-Ising model.

\subsection{FK-Ising model and Edward-Sokal coupling}\label{sec:coupling}

In this subsection, we review Edward-Sokal coupling between the Ising model and the FK-Ising model \cite{ES88} (see also \cite{Grimmett06} for an excellent account on the topic of random cluster models, where the FK-Ising is a particular and important example). Although our interests are for lattices, we describe this coupling in more general setting partly since we may need to apply this for general domains in lattices. For a finite graph $G=(V,E)$, let $V$ be the disjoint union of $W$, $U_+$ and $U_-$, and let  $\{h_x:x\in V\}$ be an external field. We define $E(V)$ to be the collection of edges within $V$ and define $E(U, V)$ to be the collection of edges between $U$ and $V$.

Then, we can naturally extend \eqref{def_h} and define the Hamiltonian (with respect to the plus boundary condition on $U^+$, the minus boundary condition on $U^-$ and the external field $h$) for an Ising spin configuration $\sigma\in\Sigma=\{-1,1\}^V$ by
$$ H^{U, \pm}_{G, h}(\sigma)= - \big(\sum_{\{x,y\}\in E(W)}\sigma_x\sigma_y+\sum_{\{x,y\}\in E(W, U_+)}\sigma_x-\sum_{\{x,y\}\in E( W, U_-)}\sigma_x+\sum_{x\in V}h_x \sigma_x\big)\,.$$
Then the Ising measure $ \mu^{U,\pm}_{T, G, h}$ is a probability measure on $\Sigma$ such that $ \mu^{U,\pm}_{T, G, h}(\sigma) \propto e^{-\frac{1}{T} H^{U, \pm}_{G, h}(\sigma)}$
where $\propto$ means ``proportional to''.

Similarly, on the graph $G=(V,E)$, we can consider a percolation configuration $\omega\in\Omega=\{0,1\}^E$ where $0$ indicates that an edge is \emph{closed} and $1$ indicates an edge is \emph{open}. For $\omega \in \Omega$ and $x \neq y\in V$, we say $x$ is \emph{connected} to $y$ in $\omega$ if $x$ is connected to $y$ by an open path in $\omega$. Let $\eta(\omega)$ be the number of open clusters in $\omega$.
Then the FK-Ising model on $G$ with parameter $p \equiv 1-\exp(-\frac{2}{T})$ is a probability measure on  percolation configuration $\Omega$ given by
\begin{equation}\label{eq-def-random-cluster}
\phi_{p,G}(\omega)= \frac{1}{\mathcal Z_\phi}\prod_{e\in E}p^{\omega(e)}(1-p)^{1-\omega(e)}\times 2^{\eta(\omega)} \mbox{ for } \omega \in \Omega\,,
\end{equation}
where $\mathcal Z_\phi$ is a normalizing constant for $\phi_{p,G}$. 
We can further pose certain boundary condition on a subset $U$ of $V$. For instance, the \emph{wired} and \emph{free} boundary condition on $U$ can be viewed as conditioning on the edges in $E(U)$ being open and closed respectively.

The Edward-Sokal coupling \cite{ES88} between the FK-Ising model and the Ising model with $p=1-\exp(-\frac{2}{T})$ can be described as follows:
for $G=(V,E),$ consider a probability measure $\pi$ on $\Sigma\times \Omega$ defined by $$\pi(\sigma,\omega)=\frac{1}{\mathcal Z_\pi}\prod_{e=\{x,y\}\in E}[(1-p)\delta(\omega(e),0)+p\delta(\omega(e),1)\delta(\sigma_x,\sigma_y)],$$
where $\delta(\cdot,\cdot)$ is a  $\{0,1\}$-valued function such that $\delta(a,b) =1$ if and only if $a=b$ and  $\mathcal Z_{\pi}$  is the normalizing constant. Then we have the following properties (see, e.g., \cite{Grimmett06}):
\begin{enumerate}[(i)]
   \item Marginal on $\Sigma$: for a fixed $\sigma\in \Sigma$, summing over all $\omega\in\Omega$ we have 
   $$\pi(\sigma)=\sum_{\omega\in\Omega}\pi(\sigma,\omega)=\frac{1}{\mathcal Z_\pi}\prod_{e=\{x,y\}\in E}[(1-p)+p\delta(\sigma_x,\sigma_y)]\propto (1-p)^{\sum\limits_{\{x,y\}\in E}\mathbbm{1}_{\{\sigma_x\neq \sigma_y\}}}\,.$$
   This coincides with the Ising measure $\mu^{U,\pm}_{T, G, h}$ with $h = 0$, $U^\pm = \emptyset$ and  $1-p=\exp(-\frac{2}{T})$.
   
   \item Marginal on $\Omega$: for a fixed $\omega\in \Omega$, summing over all   $\sigma\in \Sigma$ gives
   $$\pi(\omega)=\sum_{\sigma\in \Sigma}\pi(\sigma,\omega)=\frac{1}{\mathcal Z_\pi}\prod_{\omega(e)=0}(1-p)\sum_{\sigma\in\Sigma}\prod_{\omega(e)=1}p\delta(\sigma_x,\sigma_y).$$ 
   In order for the preceding product to be non-zero, the two spins on the endpoints of every open edge must agree and thus the spin on every open cluster is the same. As a result, there are $2^{\eta(\omega)}$ different choices for $\sigma$, thus we have
   $$\pi(\omega)=
   \frac{1}{\mathcal Z_{\pi}}\prod_{e\in E}[p^{\omega(e)}(1-p)^{1-\omega(e)}]\times 2^{\eta(\omega)}\,,$$
   which corresponds to the measure of the FK-Ising model on $\{0,1\}^E.$
   
   \item Conditioned on spin configuration: for a fixed $\sigma\in\Sigma$,
   let $E_1=\{\{x,y\}\in E: \sigma_x=\sigma_y\}$ and $E_2=E\setminus E_1$. Dividing $\pi(\sigma, \omega)$ by $\pi(\sigma)$ yields that
   $$\pi|_{\sigma}(\omega)\propto \prod_{e\in E_2}\delta(\omega(e),0)\prod_{e\in E_1}[(1-p)\delta(\omega(e),0)+ p \delta(\omega(e),1)].$$
   Thus, under $\pi|_{\sigma}$,  the edges in $E_2$ must be closed and in addition edges in $E_1$ are open with probability $p$ and closed with probability $1-p$ independently. In other words, the conditional measure for $\omega$ can be viewed as a Bernoulli bond percolation within each spin cluster.
   \item Conditioned on percolation configuration: For a fixed $\omega\in\Omega$, let $E_3 =\{e\in E:\omega(e)=1\}$ and $E_4 =E\setminus E_3$. Then we have 
   $$\pi|_{\omega}(\sigma) \propto\prod_{e=\{x,y\}\in E_3}p\delta(\sigma_x,\sigma_y).$$
   Thus, the conditional
   measure $\pi|_{\omega}$ is uniform among spin configurations where each open cluster in $\omega$ receives the same spin.
\end{enumerate}
Furthermore, the above coupling can be extended to the case with external field $\{h_x:x\in V\}$. To this end, we define $$\pi_h(\sigma,\omega)=\frac{1}{\mathcal Z_{\pi_h}}\prod_{e=\{x,y\}\in E}[(1-p)\delta(\omega(e),0)+p\delta(\omega(e),1)\delta(\sigma_x,\sigma_y)]\exp(\frac{1}{T}\sum_{x\in V}\sigma_xh_x)\,,$$
where $\mathcal Z_{\pi_h}$ is the normalizing constant. 
By a straightforward computation, we see 
\begin{equation}\label{eq-partition-function-equal}
\mathcal Z_{\pi_h} = \sum_{\sigma\in \Sigma}\sum_{\omega\in \Omega}\pi_h(\sigma,\omega)= \sum_{\sigma\in \Sigma}\exp[\frac{1}{T}(\sum_{\{x, y\}\in E} \sigma_x \sigma_y + \sum_{x\in V} \sigma_x h_x)] \cdot \exp(-\frac{1}{T}|E|)\,,
\end{equation}
implying that $\mathcal Z_{\pi_h}$ and the partition function $\mathcal Z_{\mu_h}$ for the Ising measure $\mu_{T,V,h}$  are up to a factor of $\exp(-\frac{1}{T}|E|)$, which in particular does not depend on the external field $h$.
One can verify that the aforementioned properties (in the case without external field) can be extended as follows: 
\begin{enumerate}[(i)]
    \item For a fixed $\sigma\in \Sigma$, the additional term $\exp(\frac{1}{T}\sum\limits_{x\in V}\sigma_x h_x)$ is a constant in $\omega$. As a result, the marginal distribution of $\pi_h$ on $\Sigma$ is exactly the Ising measure with external field $h$. In addition, the conditional distribution for percolation configuration given a fixed $\sigma$ is also equivalent to a Bernoulli bond percolation within each spin cluster (i.e., the same as the case for $h=0$).
    \item For a fixed $\omega \in \Omega$, in order for $\pi_h(\sigma, \omega)$ to be non-zero, each open cluster must have the same spin. Thus, writing $h_A = \sum_{x\in A} h_x$ for any subset $A \subset \mathbb Z^3$, we have
    $$\sum_{\sigma\in \Sigma}\pi_h(\sigma,\omega)=\frac{1}{\mathcal Z_{\pi_h}} \prod_{e\in E}p^{\omega(e)}(1-p)^{1-\omega(e)}\prod_{j=1}^{\eta(\omega)}2 \cosh(h_{\mcc C_j}/T),$$
    where $\mcc C_1,\mcc C_2,\cdots,\mcc C_{\eta(\omega)}$ are all the  (random) open clusters in $\omega$ and $\cosh z = \frac{e^z + e^{-z}}{2}$. Therefore, conditioned on a fixed $\omega\in \Omega$, each open cluster $\mcc C_j$ must have the same spin and this spin is plus with probability
    $\frac{\exp(h_{\mcc C_j}/T)}{2\cosh( h_{\mcc C_j}/T)}$.
\end{enumerate}

In this paper, we mainly consider the Ising measure on the box $\lamn$ as defined in \eqref{def_mu}.
We define the  internal and external boundary of a set $A$ respectively by
\begin{align*}
\pari A&=\{x\in A:\text{ there exists }y\in A^c \text{ with } x\sim y\}\,,\\
\pare A&=\{y\in A^c:\text{ there exists }x\in A \text{ with } x\sim y\}\,.
\end{align*}
We define the edge boundary of a set $A$ by $\partial^{\mathrm{e}}A = \{(x, y): x\sim y, x\in A, y\in A^c\}$. 
Then the corresponding FK-Ising measure of $\mu^{+}_{T, \Lambda_N, \epsilon h}$ is constructed by identifying the  external boundary $\pare \lamn$ to be one point and by conditioning on the event that the open cluster connected to it has the plus spin. For notation convenience, denote by $\mcc C_*$  the open cluster connected to  $\pare\lamn$, and denote by $\mcc C_0$  the open cluster containing the origin $o$. It is possible that $\mcc C_0=\mcc C_*$, i.e., the origin is connected to the boundary $\pare \lamn$ in $\omega$. In light of this, we let $s=1$ if $\mcc C_0=\mcc C_*$ and let $s=0$ otherwise.
In addition, let $\mcc C_1,\mcc C_2,\cdots,\mcc C_{\eta(\omega)-2+s}$ be the rest of the open clusters.
 Then we can write the corresponding FK-Ising measure $\phi^{+, \epsilon h}_{p,T, \lamn}$ as 
\begin{equation}\label{eq-def-FK-external-field}
\phi^{+, \epsilon h}_{p,T, \lamn}(\omega)=\frac{1}{\parz } \prod_{e\in E(\Lambda_N)\cup \partial^{\mathrm e} \Lambda_N}\hspace{-1.5em}p^{\omega(e)}(1-p)^{1-\omega(e)}\prod_{j=s}^{\eta(\omega)-2+s}\hspace{-1em}2 \cosh(\eps h_{\mcc C_j}/T)\times \exp(\eps h_{\mcc C_*}/T)\,,
\end{equation}
where $\parz$ is the partition function of $\phi^{+, \epsilon h}_{p,T, \lamn}$.  Note that on the right hand side, the weight of $\mcc C_*$ is an exponential term instead of twice the hyperbolic cosine for the reason that the spin on the  external boundary is fixed to be plus. In addition, we denote the FK-Ising measure with the plus boundary condition and without disorder by $\phi^{+, 0}_{p,T, \lamn}$ (that is, $\phi^{+, \epsilon h}_{p,T, \lamn}$ with $\epsilon = 0$), and we denote its partition function by $\mathcal Z^+_{\phi}$. 

Recall that for brevity of notation, in what follows we will omit the temperature $T$ and region $\lamn$ in the subscripts of the Ising and FK-Ising measures in the context of no ambiguity. With this convention we have
\begin{equation}\label{eq:expressionstep1}
\ism(\sigma_0=-1)=\sum_{C_0\subset \lamn}\fkh +(\mcc C_0=C_0)\frac{\exp(-\eps h_{C_0}/T)}{2\cosh(\eps h_{C_0}/T)}\,,
\end{equation}
where the sum is over all connected subset $C_0$ that contains the origin and is not connected to $\pare\lamn$ (the latter is implied by $C_0 \subset \Lambda_N$).

\subsection{Framework of coarse graining}\label{sec-coarse-graining-framework}
	
	In this subsection, we describe the method of coarse graining in the context of FK-Ising model, which plays an important role in various places of our proof and in particular in the definition for the coarsening of the FK-cluster used in our Peierls mapping. 
	The method of coarse graining for the FK-Ising model was first introduced in \cite{Pis96} for $T < \hat T_c$ where $\hat T_c$ is the so-called slab percolation critical temperature. Later in \cite{Bodineau05} it was shown that $\hat T_c = T_c$ for the FK-Ising model.	In this paper our presentation follows the framework of \cite{DCGR20} more closely, where coarse graining was applied to show the spatial mixing property for the FK-Ising model and the exponential decay of truncated correlation for the Ising model.

	As a notation convention, we typically use the mathsf font to denote objects related to the coarse graining. For an integer $\mathsf q\ge 1$ and for $\mathsf v=(\mathsf v^1, \mathsf v^2, \mathsf v^3)\in \mbb Z^3$, write $\mathsf q \mathsf v = (\mathsf q \mathsf v^1, \mathsf q \mathsf v^2, \mathsf q \mathsf v^3)$ and  write  $\mathsf q\mbb Z^3 = \{\mathsf q \mathsf v: \mathsf v\in \mbb Z^3\}$. 
	For a vertex $\mathsf v \in \mbb Z^3$,  let $\mathsf Q_{\mathsf v}$ be the box of side length $2\mathsf q$ centered at $\mathsf q\mathsf v$, i.e., 
	\begin{equation}\label{eq-def-mathsf-Q}
	\mathsf Q_{\mathsf v}=\mathsf q \mathsf v+[- \mathsf q, \mathsf q]^3\cap \mbb Z^3\,.
	\end{equation}
	For  $\mathsf V\subset \mbb Z^3$, we define
	$$\mcc Q_{\mathsf V}=\{\mathsf Q_{\mathsf v}:  \mathsf v\in \mathsf V\} \mbox{ and } \mathsf Q_{\mathsf V} = \bigcup_{\mathsf v\in \mathsf V} \mathsf Q_{\mathsf v}\,.$$ 
For later convenience and without loss of generality, we can assume that  $\mathsf q$ divides  $(N+1)$. Define $\mathsf N = \frac{N+1}{q}-1$. We see that $\mathsf{Q_{\Lambda_N}}=\Lambda_{N+1}$. 	For an FK-Ising configuration $\omega$, (following \cite{Pis96}) we say a box $\mathsf Q\in \mcc Q_{\Lambda_{\mathsf N}}$ is \emph{good} in $\omega$ if 
	\begin{enumerate}
		\item $\omega|_{\mathsf Q}$ contains a cluster touching all the 6 faces of $\mathsf Q$, where $\omega|_{\mathsf Q}$ is the restriction of $\omega$ on $\mathsf Q$ (or equivalently, the subgraph on $\mathsf Q$ induced by open edges in $\omega$);
		\item Any open path of diameter at least $\mathsf q$ in $\mathsf Q$ is included in this cluster.
	\end{enumerate}
	From the definition, for a good box $\mathsf Q$, there is a unique cluster in $\mathsf Q$ that touches all 6 faces and thus we can refer to it as \emph{the} cluster in $\mathsf Q$. By \cite{Pis96, Bodineau05} we get that for every $p>p_c$ (recall that $p = 1 - \exp(2/T)$), there exists a constant $\mathsf c_{\mathrm g}>0$ such that for every $\mathsf q>0$ and for any boundary condition $\xi$, 
	\begin{equation}
		\phi_{p, \Lambda_{2\mathsf q}}^\xi[\Lambda_{\mathsf q} \text{ is good }]\geq 1-e^{- \mathsf c_{\mathrm g} \mathsf q}.\label{goodboxprobability}
	\end{equation}
This implies that with probability close to 1 we have that $\mathsf{Q_v}$ is good, regardless of the percolation configuration on edges outside a concentric box with side length $4\mathsf q$. Furthermore, we note that this holds also for $\mathsf v\in\pari\Lambda_{\mathsf N}$ (for which the wired boundary intersects  $\mathsf{Q_v}$).
	
	As mentioned earlier, \eqref{goodboxprobability} was proved in \cite{Pis96} for $p > \hat p_c$ and it was then shown in \cite{Bodineau05} that $\hat p_c = p_c$. In the coarsening, we will consider vertices on $\Lambda_{\mathsf N}$ and we say a vertex $\mathsf v \in \Lambda_{\mathsf N}$ is good if $\mathsf Q_{\mathsf v}$ is good; otherwise, we say $\mathsf v$ is bad.
 By the definition of good box, if $\mathsf v_1, \ldots, \mathsf v_n$ is a neighboring sequence of good vertices, then there is a unique cluster within $ \mathsf Q_{\{\mathsf v_1, \ldots, \mathsf v_n\}}$ that touches every face of every box in $\mcc Q_{\{\mathsf v_1, \ldots, \mathsf v_n\}}$, and we will refer this as the \emph{mesh cluster} in  $\mathsf Q_{\{\mathsf v_1, \ldots, \mathsf v_n\}}$.

	By \eqref{goodboxprobability}, the domain Markov property and \cite[Theorem 1.5]{LSS97}, we see that $\rho$ dominates a Bernoulli percolation $\hat\rho$ with density $\mathsf p_{\mathsf q}$ such that 
	\begin{equation}\label{eq-domination}
	\mathsf p_{\mathsf q}\rightarrow_{\mathsf q\to \infty} 1\,.
	\end{equation}

\section{Proof of Proposition~\ref{prop:main-thm-restatement}}\label{sec:proof-sketch}

In this section we prove Proposition~\ref{prop:main-thm-restatement}. We first set up the framework in Section~\ref{sec:archipelago} by defining the outmost ``blue'' boundary, which has a number of novel complications: 
\begin{itemize}
\item This is defined via a coarse graining manner in the sense that we consider clusters on the coarse grained box $\Lambda_{\mathsf N}$. Essentially we wish to work with the outmost boundary of bad clusters.
\item In order for the analysis of ``localizing disorder'' later, we need this boundary of bad vertices to be surrounded by some layers of good vertices, and as a result we need to further explore from the outmost boundary (see discussions surrounding \eqref{eq-B-prime-blue}).
\item In order to control the influence of conditioning on bad vertices, we introduce some auxiliary random variables and this is also why we use blue vertices instead of bad vertices in our actual construction. 
\end{itemize}
Then, in Section~\ref{sec:sketch-of-proof} we prove Proposition~\ref{prop:main-thm-restatement} assuming three propositions:
\begin{itemize}
\item  Proposition~\ref{prop-compare-small-L} compares the measures with and without disorder when the aforementioned outmost boundary does not exist (this is the case where the FK-cluster of the origin is small). Somewhat interestingly, this is the conceptually more important case (in the sense that most of the probability for the origin to be disconnected from $\pare \Lambda_N$ is in this case) but technically it is much simpler.  
\item Proposition~\ref{prop:coarse-graining} offers the promised exponential decay in the size of the boundary which gives us necessary room to spare in our proof. This is a direct consequence of our definition of good vertices employed in coarse graining, as proved in Section~\ref{sec:coarse-graining-proof}.
\item Proposition~\ref{prop-large-surface-estimation} makes the comparison when the boundary size is large and this comparison loses an exponential factor, which can be absorbed thanks to Proposition~\ref{prop:coarse-graining}.
\end{itemize}    
In Section~\ref{subsec-influence-disorder}, we first prove Proposition~\ref{prop-compare-small-L}. Despite being simpler than Proposition~\ref{prop-large-surface-estimation}, the proof of Proposition~\ref{prop-compare-small-L} is not at all trivial and in fact captures quite some of the conceptual ideas: the simplification comes from the ``fact'' that the region where disorder ``matters'' is of constant size and thus many estimates can be loose (see \eqref{eq-small-to-show-2}). However, the justification for \eqref{eq-small-to-show-2} is not trivial, and in fact for one of the necessary ingredients \eqref{eq:small-comparison-2} we only explain the underlying intuition and refer the formal and complete proof to Section~\ref{sec:localization-disorder} where we prove this in a substantially more difficult scenario (we choose not to provide the formal proof for \eqref{eq:small-comparison-2} since it does not seem to give the reader additional warm up for understanding the harder proof in  Section~\ref{sec:localization-disorder} besides what is offered by the heuristics). Much of the work in Section~\ref{subsec-influence-disorder} is to present the proof of Proposition~\ref{prop-large-surface-estimation}, which includes the following four ingredients.
\begin{itemize}
\item Lemma~\ref{lem:flip-external-field-estimation} combined with Lemma~\ref{lem:comparez} compares the measures before and after flipping the disorder: Lemma~\ref{lem:comparez} controls the change of the partition function when flipping the disorder, which essentially follows from \cite{Chalker83, FFS84, DZ21} and is proved in Section~\ref{sec:coarse-graining-proof}; Lemma~\ref{lem:flip-external-field-estimation} controls the change of the Gibbs weight after flipping the disorder and its proof (as presented in Section~\ref{sec:proof-lem-flip-external-field-estimation}) shares the similar spirit as that for Proposition~\ref{prop:inside-M-event-h-and-no-h}.
\item Proposition~\ref{prop:inside-M-event-h-and-no-h} compares the measures with the same boundary condition on the thin shell around our outmost blue boundary, but one is with disorder and the other is without disorder. As proved in Section~\ref{sec:proof-prop-inside-M-event-h-and-no-h}, the influence of the disorder only comes from this thin shell. 
\item Proposition~\ref{prop:inside-M-event-different-theta} compares the measures without disorder but with different boundary conditions on the shell around our outmost blue boundary. This is proved in Section~\ref{sec:spatial-mixing} with crucial input from \cite{DCGR20,Alexander98}. This is necessary since the boundary conditions on the thin shell without and with disorder can potentially be quite different.
\item Proposition~\ref{prop:0-field-case-connecting-and-S2-red} provides a useful input about our blue/red percolation. The challenge here, as shown in Section~\ref{sec:blue-red-percolation}, is to deal with the conditioning, and this is why we will introduce auxiliary random variables. 
\end{itemize}

\subsection{Outmost blue boundary}\label{sec:archipelago}

In previous applications of the Peierls argument, the change of the configuration typically occurs in a certain simply connected subset; for instance in many cases it was chosen as the simply connected subset enclosed by the sign cluster of the origin.   For our proof, we need to employ a more involved  structure via the method of coarse graining. For $\mathsf k \geq 1$, we say two vertices are $\mathsf k$-neighboring if their $\ell_{\infty}$-distance is at most $\mathsf k$. And thus, we say a set is $\mathsf k$-connected if it is connected with respect to the $\mathsf k$-neighboring relation.   For each set $\mathsf A$, let $\psi(\mathsf A)$ be the collection of vertices enclosed by $\mathsf A$ (formally, a vertex $\mathsf v$ is enclosed by $\mathsf A$ if every path from $\mathsf v$ to $\pari\Lambda_{\mathsf N}$ intersects $\mathsf A$). In addition, we write $\mathrm{Ball}(\mathsf A; \mathsf k) = \{\mathsf v \in \Lambda_{\mathsf N}: d_{\infty}(\mathsf v, \mathsf A) \leq \mathsf k\}$ for $\mathsf k\geq 1$. Furthermore, for convenience of analysis later, we introduce auxiliary random variables $\mathrm{Aux} =  \{\mathrm {Aux}_{\mathsf v}: \mathsf v\in \Lambda_{\mathsf N}\}$ where $\mathrm {Aux}_{\mathsf v}$'s are i.i.d.\ Bernoulli variables so that they take value 1 with probability $\mathsf p_{\mathrm{aux}} = 1- e^{-\mathsf c_{\mathrm g} \mathsf q/{250} }$ (note that the same trick was also used in \cite{Pis96}). We denote by $\mathsf P_{\mathrm{aux}}$ the law for $\mathrm {Aux}$, and we let $\vfk+ = \fk+ \times\mathsf P_{\mathrm{aux}}$ and $\vfkh+ = \fkh+ \times \mathsf P_{\mathrm{aux}}$ be the product measures on configurations for $\omega \times \mathrm{Aux} \in \Pi = \Omega \times \{0, 1\}^{\Lambda_{\mathsf N}}$.

 We say a vertex $\mathsf v$ is \emph{blue} if either $\mathsf v$ is bad or $\mathrm {Aux}_{\mathsf v} = 0$; otherwise, we say $\mathsf v$ is \emph{red}. We say $\mathsf B$ is a blue boundary if
$\mathsf B$ is blue (i.e., every vertex in $\mathsf B$ is blue), $\psi(\mathsf B)$ is a connected set containing the origin $\mathsf o$, and $\pari \psi(\mathsf B) = \mathsf B$. So in particular, $\{\mathsf o\}$, the set of origin itself, is a blue boundary as long as $\mathsf o$ is blue. We say $\mathsf B$ is the outmost blue boundary if $\mathsf B$ is a blue boundary and $\psi(\mathsf B') \subset \psi(\mathsf B)$ for any other blue boundary $\mathsf B'$.  We use the convention that the outmost blue boundary is $\emptyset$ if there exists no blue boundary. For convenience, we let $\mathtt{Blue} \subset \Lambda_{\mathsf N}$ be the collection of blue vertices and define $\mathtt{Red} = \Lambda_{\mathsf N} \setminus \mathtt{Blue}$. For $\mathsf B, \mathsf B'\subset \Lambda_{\mathsf N}$, let $\Pi_{\mathsf B, \mathsf B'}$ be the collection of $\omega \times \mathrm{Aux}\in \Pi$ such that the following hold (see Figure~\ref{fig:my_label} for an illustration):
\begin{itemize}
\item $\mathsf B$ is the outmost blue boundary.
\item If $\mathsf B \neq \emptyset$, then $\mathsf B'$ is the collection of all blue vertices which can be $2\mathsf k$-connected to $\mathsf B$ via blue vertices; if $\mathsf B = \emptyset$, then $\mathsf B'$ is the collection of blue vertices which are $2\mathsf k$-connected to $\mathsf o$ in blue vertices. (Actually we just fix $\mathsf k=4$ but keep the notation only for conceptual clearness.)
\end{itemize}
Let $\mathfrak B$ be the collection of pairs $(\mathsf B, \mathsf B')$ such that $\Pi_{\mathsf B, \mathsf B'} \neq \emptyset$. We observe that for $(\mathsf B, \mathsf B')\in \mathfrak B$ and for $\omega \times \mathrm{Aux} \in \Pi_{\mathsf B, \mathsf B'}$, we have that 
\begin{equation}\label{eq-B-prime-blue}
\mathsf B \subset \mathtt{Blue} \mbox{ and } \mathrm{Ball}(\mathsf B \cup \mathsf B', 2\mathsf k) \cap \mathtt{Blue} = \mathsf B \cup\mathsf B'\,,
\end{equation}
and that 
\begin{equation}\label{eq-red-path}
    \begin{aligned}
    &\text{for each point } \mathsf v\in\pare\psi(\mathsf B)\text{ there exists a red path in }\\
    &\mathrm{Ball}(\mathsf B\cup \mathsf B',2\mathsf k)\setminus\psi(\mathsf B) \text{ connecting }\mathsf v \text{ and } \pari(\mathrm{Ball}(\psi(\mathsf B)\cup\mathsf B',2\mathsf k)).
    \end{aligned}
\end{equation}
We may verify \eqref{eq-red-path} by contradiction. Since all vertices in $\pare \psi (\mathsf B)$ are red due to the fact that $\psi(\mathsf B)$ is the outmost blue boundary, if \eqref{eq-red-path} fails then there can be no red path joining some $\mathsf v\in\pare \psi(\mathsf B)$ and $\pare \Lambda_{\mathsf N}$, contradicting with the maximality of $\psi(\mathsf B)$. In addition, note that for $(\mathsf B, \mathsf B')\in \mathfrak B$, \eqref{eq-red-path} is a consequence of \eqref{eq-B-prime-blue}. 
By \cite[Lemma 2.1]{DP96} (see also \cite[Theorem 4]{Timar13} for a more general result, and see \cite{Kesten84, Hammond06} for related results) on boundary connectivity, we see that for $\mathsf k \geq 2$ there exists a connected subset $\mathtt R \subset  (\mathrm{Ball}(\mathsf B \cup \mathsf B', 2\mathsf k) \setminus \mathrm{Ball}(\mathsf B \cup \mathsf B', \mathsf k))$ such that 
\begin{equation}\label{eq-red-surounding}
\psi(\mathrm{Ball}(\mathsf B \cup \mathsf B', \mathsf k)) \subset \psi(\mathtt R) \mbox{ and } \mathtt R \subset \mathtt{Red}
\end{equation}
where the second inclusion follows from \eqref{eq-B-prime-blue}. Finally, we have that 
\begin{equation}\label{eq-R-to-Lambda-N}
\mbox{there exists a red path  connecting $\mathtt R$ and $\pari \Lambda_{\mathsf N}$}\,.
\end{equation}
This is true since otherwise by $\psi(\mathrm{Ball}(\mathsf B \cup \mathsf B', \mathsf k)) \subset \psi(\mathtt R)$ we cannot have $\pare\psi(\mathsf B) $ connected to $\pari \Lambda_{\mathsf N}$ via a red path.

\begin{figure}
    \centering
    \vspace{-5em}
    \includegraphics[width=35em]{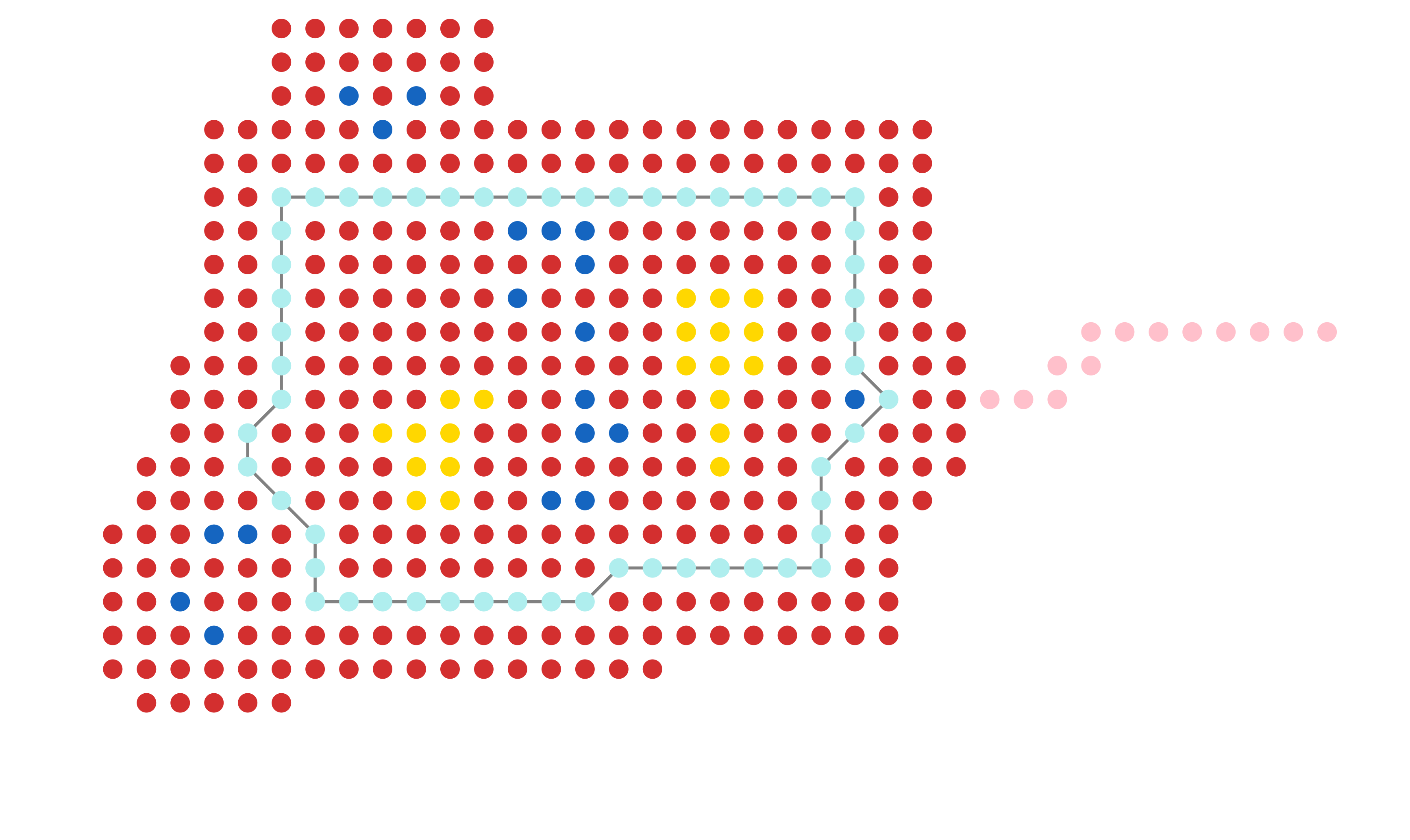}
    \vspace{-4em}
    \caption{An illustration of $\mathrm{Ball}(\mathsf{B\cup \mathsf B'},2)$. The picture depicts a possible pair $(\mathsf B, \mathsf B')\in \mathfrak B$ with $\mathsf k=1$ as an example. Precisely, $\mathsf{B}$ is the collection of light blue points (the outmost blue boundary for every $\omega \times \mathrm{Aux}\in \Pi_{\mathsf B, \mathsf B'}$) and $\mathsf{B'}$ is the collection of the dark blue points. In addition, the remaining points in $\mathrm{Ball}(\mathrm{B\cup B'},2)$ are red, the pink points denote a red path connecting $\pare\psi(\mathrm{Ball}(\mathsf{B\cup B'},2))$ and the boundary $\pari\Lambda_{\mathsf N}$, and the yellow points illustrate the `holes' defined in Definition~\ref{def-B-B-prime-U} below.}
    \label{fig:my_label}
\end{figure}

 Let $\mathcal C_o$ be the FK-cluster of the origin. By \eqref{eq-red-surounding} and by the definition of red (in fact, by the definition of good boxes), we see that for $\omega \times \mathrm{Aux} \in \Pi_{\mathsf B, \mathsf B'}$, 
\begin{equation}
\mbox{if } o \mbox{ is not $\omega$-connected to } \pare \Lambda_N \mbox{ and } \mathsf B \neq \emptyset, \mbox{ then } \mathcal C_o \subset \mathsf Q_{\psi(\mathrm{Ball}(\mathsf B \cup \mathsf B', \mathsf k)) }\,.
\end{equation}
In addition, we have that
\begin{equation}\label{eq-decomposition-disconnected-event-big}
\mbox{if } o \mbox{ is not $\omega$-connected to } \pare \Lambda_N \mbox{ and } \mathsf B  =\emptyset, \mbox{ then } \mathcal C_o \subset \mathsf {Q_o} \,,
\end{equation}
where both $o$ and  $\mathsf o$ are the vertex $(0, 0, 0)$ but conceptually $o$ is regarded as the origin in $\Lambda_N$ and $\mathsf o$ is regarded as the origin in the coarse grained box $\Lambda_{\mathsf N}$.

\subsection{Proof of Proposition~\ref{prop:main-thm-restatement}}\label{sec:sketch-of-proof}

The following is simply a rewrite of \eqref{eq:expressionstep1}:
    \begin{equation}\label{eq-decomposition-0-epsilon-field}
    \ism(\sigma_o = -1)=\sum_{(\mathsf B, \mathsf B') \in \mathfrak B}\sum_{o\in C_o, C_o \subset \Lambda_N}\vfkh +(\Pi_{\mathsf B, \mathsf B'}\cap \{\mcc C_o=C_o\})\times \frac{\exp(-\frac{\eps}{T}h_{C_o})}{2\cosh(\frac{\eps}{T}h_{C_o})}\,.
\end{equation} 
In light of \eqref{eq-decomposition-0-epsilon-field} (note that it obviously also holds for $\epsilon = 0$), the key is to compare the FK-measure without disorder to the FK-measure with disorder. As a warm up, we control the case of $\mathsf B = \mathsf B' = \emptyset$ which makes the major contribution to $\ism(\sigma_o = -1)$ but is technically simpler.
\begin{prop}\label{prop-compare-small-L}
For any $T<T_c$ and $\delta, \theta_1 > 0$, there exist $\mathsf q_0 = \mathsf q_0(T, \delta, \theta_1)$ such that for any $\mathsf q \geq \mathsf q_0$ the following holds. There exists $\eps_0=\eps_0(\mathsf q, \delta,\theta_1,T)$ such that for $\epsilon \leq \epsilon_0$, we have
 with $\mbb P$-probability at least $1-\delta$,
\begin{align*}
&\sum_{o\in C_o, C_o \subset \Lambda_N}\vfkh +(\Pi_{\emptyset, \emptyset}\cap \{\mcc C_o=C_o\})\frac{\exp(-\frac{\eps}{T}h_{C_o})}{2\cosh(\frac{\eps}{T}h_{C_o})} \\
&\leq e^{\theta_1}\sum_{o\in C_o, C_o \subset \Lambda_N}\vfk +(\Pi_{\emptyset, \emptyset}\cap \{\mcc C_o=C_o\})\times\frac{1}{2}\,.
\end{align*}
\end{prop}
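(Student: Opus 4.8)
The plan is to prove Proposition~\ref{prop-compare-small-L} by comparing, term by term in the sum over $C_o$, the weighted FK-measure with disorder against the one without disorder, on the event $\Pi_{\emptyset,\emptyset}$. On this event, by \eqref{eq-decomposition-disconnected-event-big} the cluster $\mcc C_o$ of the origin is contained in the single coarse box $\mathsf Q_{\mathsf o}$, so $C_o$ ranges only over connected subsets of a box of bounded size $O(\mathsf q^3)$. This is the source of the simplification: $|h_{C_o}| \le \sum_{v\in \mathsf Q_{\mathsf o}} |h_v|$ is, with $\mbb P$-probability at least $1-\delta$, bounded by some constant $M = M(\mathsf q,\delta)$ uniformly over all such $C_o$. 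Once we are on this high-probability event for the disorder, the factor $\frac{\exp(-\frac{\eps}{T} h_{C_o})}{2\cosh(\frac{\eps}{T}h_{C_o})}$ is within $e^{\theta_1/2}$ of $\frac12$, say, provided $\eps \le \eps_0(\mathsf q,\delta,\theta_1,T)$ is small enough that $\frac{\eps}{T} M$ is tiny. So the entire content reduces to comparing $\vfkh+(\Pi_{\emptyset,\emptyset}\cap\{\mcc C_o = C_o\})$ with $\vfk+(\Pi_{\emptyset,\emptyset}\cap\{\mcc C_o = C_o\})$ up to an $e^{\theta_1/2}$ multiplicative factor, again on a high-probability disorder event.

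For that FK-measure comparison, I would proceed as follows. The event $\Pi_{\emptyset,\emptyset}\cap\{\mcc C_o = C_o\}$ is determined by: (a) the edges incident to $C_o$ (to pin down the cluster exactly), (b) the configuration/Aux on the coarse vertices in a bounded neighborhood $\mathrm{Ball}(\mathsf o, 2\mathsf k)$ of $\mathsf o$ needed to certify $\mathsf B = \mathsf B' = \emptyset$ (i.e.\ $\mathsf o$ is red, and there is no blue boundary), and by \eqref{eq-R-to-Lambda-N} the existence of a red path from a red surrounding set out to $\pari\Lambda_{\mathsf N}$. All of this (except possibly the far red connection) lives inside a bounded-radius coarse region around $\mathsf o$. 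The ratio of partition functions $\parz/\mathcal Z_\phi^+$ together with the ratio of the cosh/exp weights appearing in \eqref{eq-def-FK-external-field} can be controlled by the same kind of estimate: flipping the disorder in the simply connected region enclosed by an appropriate boundary changes the partition function by at most $e^{\delta_\eps L}$ with $L$ bounded here, following \cite{Chalker83, FFS84, DZ21}. More directly, since everything relevant is confined to a bounded coarse region, one can write the ratio of the two weighted probabilities as a ratio of finite sums of products of the form $\prod p^{\omega(e)}(1-p)^{1-\omega(e)}\prod 2\cosh(\eps h_{\mcc C_j}/T)\cdot\exp(\eps h_{\mcc C_*}/T)$ over a common configuration space (the far-field contribution, including the red path to $\pari\Lambda_{\mathsf N}$, factors out in a controlled way via the domain Markov property and monotonicity, since the dependence on the external field there is negligible after conditioning suitably). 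Each $\cosh$ and $\exp$ factor is within $e^{\pm \frac{\eps}{T}|h_{\mcc C_j}|}$ of its no-field value; bounding all such clusters' $h$-sums by the same $M$ and taking $\eps$ small makes the whole ratio $\le e^{\theta_1/2}$.

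So the proof skeleton is: (1) restrict to the disorder event $\{\sum_{v\in \mathsf Q_{\mathsf o}}|h_v| \le M\}$, which has $\mbb P$-probability $\ge 1-\delta$ for $M = M(\mathsf q,\delta)$ large; (2) on this event, bound the single-term weight $\frac{\exp(-\frac{\eps}{T}h_{C_o})}{2\cosh(\frac{\eps}{T}h_{C_o})} \le \frac12 e^{\frac{2\eps}{T}M}$; (3) bound $\frac{\vfkh+(\Pi_{\emptyset,\emptyset}\cap\{\mcc C_o=C_o\})}{\vfk+(\Pi_{\emptyset,\emptyset}\cap\{\mcc C_o=C_o\})} \le e^{C\frac{\eps}{T}M}$ uniformly in $C_o$ by the localization-of-disorder argument sketched above; (4) choose $\mathsf q_0$ so that \eqref{goodboxprobability}, \eqref{eq-domination} give enough room (this is actually not needed quantitatively here beyond what makes $\Pi_{\emptyset,\emptyset}$ a sensible event), then choose $\eps_0$ so that $e^{(C+2)\frac{\eps_0}{T}M} \le e^{\theta_1}$; (5) sum over $C_o$.

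**The main obstacle** is step (3): rigorously showing that the ratio of FK-measures (with versus without disorder) on $\Pi_{\emptyset,\emptyset}\cap\{\mcc C_o=C_o\}$ is $\le e^{O(\eps M)}$. The difficulty is that $\Pi_{\emptyset,\emptyset}$ constrains the configuration globally (the red path out to $\pari\Lambda_{\mathsf N}$ in \eqref{eq-R-to-Lambda-N}), and the partition functions $\parz$, $\mathcal Z_\phi^+$ involve $\cosh$-weights of clusters of all sizes, so one must argue that the disorder's influence genuinely localizes to the bounded region $\mathsf Q_{\psi(\mathrm{Ball}(\mathsf o,2\mathsf k))}$ and that the large clusters' $h_{\mcc C_j}$-fluctuations cancel between numerator and denominator. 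As the paper itself flags (the reference to \eqref{eq:small-comparison-2} being proved only heuristically here and in full in Section~\ref{sec:localization-disorder}), this is exactly the crux that is postponed; I would either invoke Lemma~\ref{lem:comparez}-type control on the partition-function ratio together with a conditioning argument isolating the bounded region, or simply forward-reference the harder localization result. The other steps are routine.
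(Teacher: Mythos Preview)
Your proposal is essentially correct and follows the same overall strategy as the paper: restrict to a high-probability disorder event on which $\sum_{v\in\mathsf Q_{\mathsf o}}|h_v|$ is bounded, control the $\cosh$-weight, and then reduce to comparing the FK-measures with and without disorder on the relevant local event, forward-referencing the localization argument of Section~\ref{sec:localization-disorder}.

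There is one tactical difference worth noting in step (3). You propose a direct term-by-term bound $\vfkh+(\Pi_{\emptyset,\emptyset}\cap\{\mcc C_o=C_o\})\le e^{O(\epsilon M)}\vfk+(\Pi_{\emptyset,\emptyset}\cap\{\mcc C_o=C_o\})$, which forces you to handle the global constraint (the red path to $\pari\Lambda_{\mathsf N}$) inside the disorder comparison; you correctly flag this as the obstacle. The paper instead sidesteps it: after summing over $C_o$ it relaxes the full event $\{\mathrm{Ball}(\mathsf o,2\mathsf k)\subset\mathtt{Red}\}\cap\{\pare\mathrm{Ball}(\mathsf o,2\mathsf k)\xleftrightarrow{\mathtt{Red}}\pari\Lambda_{\mathsf N}\}$ to the purely local event $\{\mathrm{Shell}(\mathsf o,(\mathsf k,2\mathsf k])\subset\mathtt{Red}\}$, performs the with/without-disorder comparison only on $\{\mcc C_o\subset\mathsf Q_{\mathsf o}\}\cap\{\text{shell red}\}$ (this is \eqref{eq-small-to-show-2}, proved conditionally on $\omega|_{\mathsf E}$ using that the shell is good and then spatial mixing from \cite{DCGR20}), and then restores the dropped global red conditions entirely in the no-disorder measure via the lower bound \eqref{eq-small-to-show-1}, which follows from finite energy and \eqref{eq-domination}. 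This decomposition is what makes $\mathsf q_0$ genuinely necessary (for \eqref{eq-small-to-show-1} and for the spatial mixing step), whereas in your sketch the role of $\mathsf q_0$ is left vague. Your direct route would also work, but it asks for more than the paper actually proves.
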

Proposition~\ref{prop-compare-small-L} is plausible since one may expect that the influence from disorder only comes from disorder in $\mathsf Q_{\mathsf o}$ and thus it vanishes as $\epsilon \to 0$. However, this is not as obvious as one may think and in fact the proof uses the fact that the origin $\mathsf o \in \Lambda_{\mathsf N}$ is surrounded by good vertices. The much harder case is when $|\mathsf B|+ |\mathsf B'| \geq 1$ and especially when it is very large, although conceptually this should make very little contribution to $\ism(\sigma_o = -1)$. The following extension of results in \cite{Pis96} and \cite{Bodineau05} would serve as a useful ingredient for the analysis in this case as it gives us some room to spare for our estimates. 
\begin{prop}\label{prop:coarse-graining}
For any $T<T_c$, there exists $\mathsf q_0 = \mathsf q_0(T)$ such that for any $\mathsf q \geq \mathsf q_0$ the following holds for a constant $b=b(T, \mathsf q_0)>0$ (which does \emph{not} depend on $N$ or $\mathsf q$):
    \begin{equation}
       \vfk+(\bigcup_{(\mathsf B, \mathsf B')\in \mathfrak B: |\mathsf B \cup \mathsf B'| = L} \Pi_{\mathsf B, \mathsf B'})\leq \exp(-b \mathsf q L). \label{eq:coarse-graining}
    \end{equation}
\end{prop}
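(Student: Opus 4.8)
The plan is a Peierls-type enumeration of the pairs $(\mathsf B,\mathsf B')$. Since $\{\mathsf o\}$ is itself a blue boundary whenever $\mathsf o$ is blue, on the event in \eqref{eq:coarse-graining} with $L\geq 1$ we necessarily have $\mathsf B\neq\emptyset$ (if $\mathsf B=\emptyset$ then $\mathsf o\in\mathtt{Red}$, hence $\mathsf B'=\emptyset$, $L=0$, and the bound is trivial). By a union bound, and discarding every requirement in the definition of $\Pi_{\mathsf B,\mathsf B'}$ except ``$\mathsf B\cup\mathsf B'\subset\mathtt{Blue}$'',
\begin{equation*}
\vfk+\Big(\bigcup_{(\mathsf B,\mathsf B')\in\mathfrak B:\ |\mathsf B\cup\mathsf B'|=L}\Pi_{\mathsf B,\mathsf B'}\Big)\ \leq\ \sum_{(\mathsf B,\mathsf B')}\vfk+\big(\mathsf B\cup\mathsf B'\subset\mathtt{Blue}\big),
\end{equation*}
the sum being over a convenient combinatorial superset of the realizable pairs with $|\mathsf B\cup\mathsf B'|=L$. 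It then suffices to (i) bound $\vfk+(\mathsf B\cup\mathsf B'\subset\mathtt{Blue})$ by $e^{-c\mathsf q L}$ for each fixed pair, and (ii) bound the number of such pairs by $C^{L}$ for a dimensional constant $C$; taking $\mathsf q$ large (depending on $T$) then closes the estimate.

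For (i), I would use that blueness is a finite-range-dependent field: $\mathsf v\in\mathtt{Blue}$ iff $\mathsf Q_{\mathsf v}$ is bad or $\mathrm{Aux}_{\mathsf v}=0$, where the former depends on $\omega$ only inside the concentric box $\mathsf q\mathsf v+[-2\mathsf q,2\mathsf q]^{3}$ and the latter is i.i.d. By the domain Markov property of the FK-Ising model and \eqref{goodboxprobability} applied with the induced boundary condition, for every $\mathsf v$ the conditional probability of $\{\mathsf v\in\mathtt{Blue}\}$ given the $\sigma$-algebra generated by $\omega$ off that box and by $\mathrm{Aux}$ off $\mathsf v$ is at most $e^{-\mathsf c_{\mathrm g}\mathsf q}+(1-\mathsf p_{\mathrm{aux}})\leq 2e^{-\mathsf c_{\mathrm g}\mathsf q/250}$. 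From $\mathsf B\cup\mathsf B'$ (of size $L$) I would extract greedily a subset $\mathsf S$ with pairwise $\ell_\infty$-distances at least $5$ and $|\mathsf S|\geq L/9^{3}$ (maximality forces the radius-$4$ cubes around $\mathsf S$ to cover $\mathsf B\cup\mathsf B'$); the concentric side-$4\mathsf q$ boxes around the points of $\mathsf S$ are then pairwise disjoint, so iterating the conditional bound over $\mathsf S$ yields $\vfk+(\mathsf B\cup\mathsf B'\subset\mathtt{Blue})\leq (2e^{-\mathsf c_{\mathrm g}\mathsf q/250})^{L/9^{3}}$. (Equivalently, step (i) can be carried out via the stochastic domination of the blue field by a low-density Bernoulli percolation in the spirit of \eqref{eq-domination}.)

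For (ii), the crucial structural fact is that $\mathsf B=\pari\psi(\mathsf B)$ is $*$-connected and lies close to $\mathsf o$. Indeed $\psi(\mathsf B)$ is connected by hypothesis, and $\psi(\mathsf B)^{c}$ — the set of vertices reachable from $\pari\Lambda_{\mathsf N}$ without meeting $\mathsf B$ — is connected as well, so the boundary-connectivity results \cite[Lemma 2.1]{DP96}, \cite[Theorem 4]{Timar13} show that $\mathsf B$ is $*$-connected. Since $\mathsf B$ separates $\mathsf o$ from $\pari\Lambda_{\mathsf N}$ it meets the two coordinate rays emanating from $\mathsf o$, and being $*$-connected with at most $L$ vertices it has $\ell_\infty$-diameter $<L$; hence $\mathsf B$ contains a vertex of the segment $\{(a,0,0):0\leq a\leq L\}$ and $\mathsf B\subset\mathrm{Ball}(\mathsf o,2L)$. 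Standard lattice-animal counting then bounds the number of choices of $\mathsf B$ by $(L+1)\,C_{*}^{L}$, and for each fixed $\mathsf B$ the set $\mathsf B'$ is a $2\mathsf k$-connected set attached to $\mathsf B$, of which there are at most $|\mathrm{Ball}(\mathsf B,2\mathsf k)|\,C^{|\mathsf B'|}\leq L\,(C')^{L}$. Summing over the split $|\mathsf B|+|\mathsf B'|=L$ and absorbing polynomial factors gives at most $C^{L}$ pairs, for all $L\geq 1$.

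Combining (i) and (ii), the left-hand side of \eqref{eq:coarse-graining} is at most $\big(2^{1/9^{3}}C\,e^{-\mathsf c_{\mathrm g}\mathsf q/(250\cdot 9^{3})}\big)^{L}$, which is at most $e^{-b\mathsf q L}$ with $b=\mathsf c_{\mathrm g}(T)/(2\cdot 250\cdot 9^{3})$, provided $\mathsf q\geq\mathsf q_0(T)$ is large enough that $2^{1/9^{3}}C\leq e^{\mathsf c_{\mathrm g}(T)\mathsf q_0/(2\cdot 250\cdot 9^{3})}$; note that $\mathsf q_0$ and $b$ depend only on $T$ (through $\mathsf c_{\mathrm g}$), not on $N$ or $\mathsf q$. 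The main obstacle is step (ii): because $\psi(\mathsf B)$ can be enormous while $|\mathsf B\cup\mathsf B'|=L$ remains small, enumerating through $\psi(\mathsf B)$ would be super-exponential in $L$, and the remedy is to enumerate $\mathsf B$ directly, exploiting its $*$-connectivity and its confinement near $\mathsf o$. A secondary delicate point in step (i) is the uniformity over boundary conditions in the conditional blueness estimate, which is what makes the finite-range dependence argument legitimate.
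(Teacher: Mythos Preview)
Your proof is correct and follows the same strategy as the paper's: a union bound over admissible $(\mathsf B,\mathsf B')$, a lattice-animal enumeration, and a per-vertex bound on blueness. The paper streamlines your step (ii) by enumerating $\mathsf B\cup\mathsf B'$ in one shot as a $2\mathsf k$-connected set containing a vertex of $\mathrm{Ball}(\mathsf o,2\mathsf kL)$ (the pair $(\mathsf B,\mathsf B')$ being recoverable from the union since $\mathsf B$ is the outmost contour around $\mathsf o$), which avoids a small imprecision in your argument: $\mathsf B'$ by itself need not be a single $2\mathsf k$-connected piece, so the bound ``$|\mathrm{Ball}(\mathsf B,2\mathsf k)|\,C^{|\mathsf B'|}$'' is not literally justified when $\mathsf B'$ has several components---enumerating the union directly fixes this at no cost. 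For step (i) the paper simply invokes the stochastic domination \eqref{eq-domination}, i.e.\ your parenthetical alternative; your explicit well-separated-subset extraction is equally valid (indeed the paper uses the same device in \eqref{eq:many-bad-upper-bound}).
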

Thanks to Proposition~\ref{prop:coarse-graining}, we can afford an error that is exponentially large in the following proposition.
\begin{prop}\label{prop-large-surface-estimation}
For any $T<T_c$, $\delta > 0$, there exists $\mathsf q_0 = \mathsf q_0(T,\delta)$ such that for any $\mathsf q \geq \mathsf q_0$ the following holds. There exists $\eps_0=\eps_0(\mathsf q,\delta,T)$ such that for $\epsilon \leq \epsilon_0$, we have
 with $\mbb P$-probability at least $1- \delta$,
\begin{align*}
    \vfkh +(\Pi_{\mathsf B, \mathsf B'})\leq e^{200 \mathsf k^3 L}\times \vfk+(\Pi_{\mathsf B, \mathsf B'}) \mbox { for all } (\mathsf B, \mathsf B') \in \mathfrak B \mbox{ with } |\mathsf B| +|\mathsf B'| = L \geq 1\,.
\end{align*}
\end{prop}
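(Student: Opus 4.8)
\section*{Proof proposal for Proposition~\ref{prop-large-surface-estimation}}

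The plan is to fix a pair $(\mathsf B,\mathsf B')\in\mathfrak B$ with $|\mathsf B|+|\mathsf B'| = L\geq 1$ and compare $\vfkh+(\Pi_{\mathsf B,\mathsf B'})$ to $\vfk+(\Pi_{\mathsf B,\mathsf B'})$ by running the Peierls mapping that flips the disorder inside the region enclosed by the outmost blue boundary, exactly as set up in Sections~\ref{sec:archipelago}. Concretely, on the event $\Pi_{\mathsf B,\mathsf B'}$ we have the red surrounding set $\mathtt R$ from \eqref{eq-red-surounding}--\eqref{eq-R-to-Lambda-N}, and the thin shell between $\psi(\mathsf B)$ and $\mathtt R$ is insulated by good boxes. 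First I would use the Edward--Sokal partition-function identity \eqref{eq-partition-function-equal} together with Lemma~\ref{lem:comparez} to control the multiplicative change in the partition function $\parz$ versus $\mathcal Z^+_\phi$ when the signs of the disorder in $\mathsf Q_{\psi(\mathsf B\cup\mathsf B')}$ (or the appropriate simply connected region) are flipped; the greedy-lattice-animal bound behind Lemma~\ref{lem:comparez} gives a factor $e^{\delta_\epsilon |\partial|}$ with $\delta_\epsilon\to 0$, so for $\epsilon$ small this is at most $e^{\mathsf q L}$, say, using $|\partial(\mathsf Q_{\psi(\mathsf B\cup \mathsf B')})| = O(\mathsf q^2 L)$ — wait, more carefully, the relevant boundary is the outmost boundary which has coarse-grained size $O(L)$, hence physical size $O(\mathsf q^2 L)$, and $\delta_\epsilon$ can be made $\le \epsilon$-small so the contribution is $\le e^{\mathsf q L}$ after choosing $\epsilon_0 = \epsilon_0(\mathsf q)$.

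Next I would decompose the Gibbs weight. For each fixed $\omega\times\mathrm{Aux}\in\Pi_{\mathsf B,\mathsf B'}$, the ``outer'' configuration (outside $\mathtt R$, including the red path to $\pari\Lambda_{\mathsf N}$) is unaffected by flipping disorder inside, because the flip is confined to $\mathsf Q_{\psi(\mathsf B\cup\mathsf B')}$ and the good boxes in $\mathtt R$ shield it. So the comparison factorizes: I need (a) the change of the Gibbs weight restricted to the thin shell and its interior under the disorder flip, and (b) that conditioning on $\Pi_{\mathsf B,\mathsf B'}$ itself (the blue/red pattern) is not distorted too much. For (a) I would invoke Proposition~\ref{prop:inside-M-event-h-and-no-h}, which says the disorder's influence on the measure restricted to the shell is controlled, combined with Proposition~\ref{prop:inside-M-event-different-theta} to absorb the discrepancy between the boundary conditions induced on the shell with and without disorder (these can differ because flipping disorder changes the conditional spin distributions on clusters touching the shell); this pair of inputs is exactly what the spatial-mixing machinery of Section~\ref{sec:spatial-mixing} is designed to supply, and it is where the factor $e^{O(\mathsf k^3 L)}$ — bounded by $e^{200\mathsf k^3 L}$ after absorbing constants — enters, since the shell $\mathrm{Ball}(\mathsf B\cup\mathsf B',2\mathsf k)\setminus\psi(\mathsf B)$ has volume $O(\mathsf k^3 L)$ in coarse-grained units. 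For (b) I would use Proposition~\ref{prop:0-field-case-connecting-and-S2-red} on the blue/red percolation to handle the conditioning on the event that $\mathsf B$ is the outmost blue boundary and $\mathsf B'$ the attached blue vertices, accounting for the auxiliary variables $\mathrm{Aux}$.

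The main obstacle I expect is step (a), specifically reconciling the two boundary conditions on the shell. After flipping the disorder, the conditional spin law on an FK-cluster that straddles the shell is governed by $\cosh(\epsilon h_{\mathcal C}/T)$ with the flipped signs, which is genuinely a different boundary condition for the Ising/FK measure on the shell than the no-disorder one; bounding the ratio of these two restricted measures uniformly in $L$ is exactly the delicate ``localization of disorder'' and requires both the exponential mixing from \cite{DCGR20,Alexander98} (to decouple the shell from the bulk) and the good-box insulation (to ensure the mixing applies across a genuine good region of width $\gtrsim \mathsf k\mathsf q$). The bookkeeping that the accumulated error is at most $e^{200\mathsf k^3 L}$ — rather than, say, something growing with $\mathsf q$ in a way that cannot later be beaten by Proposition~\ref{prop:coarse-graining}'s $e^{-b\mathsf q L}$ — is the crux: one must keep the shell-volume constant $200\mathsf k^3$ free of $\mathsf q$, which forces the disorder-dependent errors to be $\epsilon$-small (hence choosing $\epsilon_0$ after $\mathsf q$) rather than merely $\mathsf q$-independent. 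Finally I would sum the bad set of disorder realizations over all $(\mathsf B,\mathsf B')$ and all scales $L$ using a union bound and the fact that the number of coarse-grained animals of size $L$ containing $\mathsf o$ is at most $C^L$, so that the exceptional $\mathbb P$-probability is $\le\delta$ uniformly in $N$.
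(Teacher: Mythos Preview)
Your high-level plan is the paper's plan: combine Lemma~\ref{lem:comparez}, Propositions~\ref{prop:inside-M-event-h-and-no-h}, \ref{prop:inside-M-event-different-theta}, and \ref{prop:0-field-case-connecting-and-S2-red} to compare the disordered and undisordered measures of $\Pi_{\mathsf B,\mathsf B'}$. But two technical pieces you gloss over are precisely where the argument has to be organized carefully, and your description of the Peierls flip is not quite right.

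First, the disorder flip is \emph{not} a single sign-flip on $\mathsf Q_{\psi(\mathsf B\cup\mathsf B')}$. The map $\tau_{\omega^{\mathtt E}}$ in \eqref{eq-def-flipping} flips each hole $U_j$ separately so that the fields $(\tau_{\omega^{\mathtt E}}(h))_{C_j}$ all share the same sign (aligned with $C^{\mathtt E}_\diamond$, or all positive if $C^{\mathtt E}_\diamond\leftrightarrow\pare\Lambda_N$). This alignment is exactly what the proof of Proposition~\ref{prop:inside-M-event-h-and-no-h} in Section~\ref{sec:localization-disorder} exploits: without it, two large clusters $C_i,C_j$ with opposite signs could meet inside $\mathsf B\cup\mathsf B'$ and produce a $\cosh$-ratio that is not controllable by $H_{\mathsf B,\mathsf B'}$. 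A single global flip would not achieve this, and the localization step would fail. Relatedly, you omit Lemma~\ref{lem:flip-external-field-estimation}, which compares the \emph{Gibbs weight} of $\omega$ under $\epsilon h$ versus $\epsilon\tau_\omega(h)$; Lemma~\ref{lem:comparez} alone only handles the partition-function ratio $\parf{\tau_\omega(h)}/\parz$.

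Second, because $\tau_{\omega^{\mathtt E}}$ depends on $\omega^{\mathtt E}$, you cannot simply say ``the outer configuration is unaffected'' and factor. The paper's proof decomposes $\Pi^2_{\mathsf B,\mathsf B'}$ over $\omega^{\mathtt E}\in\Omega^{\mathtt E}_{\mathsf B,\mathsf B'}$ as in \eqref{eq:localize-disorder-decomposition-conditional-probability-version}, then uses Proposition~\ref{prop:inside-M-event-different-theta} to replace every conditional $\fk+(\Omega_{\mathrm{Aux}}(\omega^{\mathtt E})\mid\omega^{\mathtt E})$ by the same conditional at a \emph{fixed} $\omega_0^{\mathtt E}$, which is what allows the sum over $\omega^{\mathtt E}$ to factor out. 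At that point the remaining sum $\sum_{\omega^{\mathtt E}}\fkf+{\tau_{\omega^{\mathtt E}}(h)}(\Omega(\omega^{\mathtt E}))$ is \emph{not} a probability (different $\omega^{\mathtt E}$ use different flipped fields), and the paper bounds it by $2^n\leq e^{80\mathsf k^3 L}$ via the multiplicity of $\tau$ (see \eqref{eq-flipping-sum-upper-bound}). This $2^n$ accounting and the $\omega_0^{\mathtt E}$ trick are the missing glue between the conditional comparisons you cite and the unconditional statement. Your final union bound over $(\mathsf B,\mathsf B')$ is unnecessary: the simultaneous-in-$(\mathsf B,\mathsf B')$ claims are already built into Lemma~\ref{lem:comparez} and Proposition~\ref{prop:inside-M-event-h-and-no-h}.
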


\begin{proof}[Proof of Proposition~\ref{prop:main-thm-restatement}]
Write $\mathfrak B_L = \{(\mathsf B, \mathsf B') \in \mathfrak B: |\mathsf B| + |\mathsf B'| = L \}$. By  Proposition~\ref{prop-large-surface-estimation} we may choose $\mathsf q$ large and $\epsilon$ small enough such that $\mathbb P$-probability at least $1 - \delta$, 
\begin{align*}
&\sum_{L=1}^\infty \sum_{(\mathsf B, \mathsf B') \in \mathfrak B_L} \sum_{C_0}\vfkh +(\Pi_{\mathsf B, \mathsf B'}\cap \{\mcc C_o=C_o\})\times \frac{\exp(-\frac{\eps}{T}h_{C_o})}{2\cosh(\frac{\eps}{T}h_{C_o})} \leq \sum_{L = 1}^\infty \sum_{(\mathsf B, \mathsf B') \in \mathfrak B_L} \vfkh+(\Pi_{\mathsf B, \mathsf B'})\\
& \leq  \sum_{L = 1}^\infty \sum_{(\mathsf B, \mathsf B') \in \mathfrak B_L} e^{200 \mathsf k^3 L} \vfk+(\Pi_{\mathsf B, \mathsf B'}) \leq \sum_{L=1}^\infty e^{200\mathsf k^3L - b\mathsf q L}\,,
\end{align*}
where the second and third inequalities follow from Propositions~\ref{prop-large-surface-estimation} and \ref{prop:coarse-graining} respectively. Since the above bound tends to 0 as $\mathsf q \to \infty$, we complete the proof by combining Proposition~\ref{prop-compare-small-L} (which controls the case of $|\mathsf B| + |\mathsf B'| = 0$) and \eqref{eq-decomposition-0-epsilon-field}.
\end{proof}

\subsection{Influence from disorder}\label{subsec-influence-disorder}

In this subsection, we explain how to control the influence from disorder, that is, how to compare the measure with disorder to that without disorder. We first prove Proposition~\ref{prop-compare-small-L}, which is much simpler than  Proposition~\ref{prop-large-surface-estimation} but already encapsulates conceptual difficulties. 
\begin{proof}[Proof of Proposition~\ref{prop-compare-small-L}]
Recalling \eqref{eq-decomposition-disconnected-event-big}, on the event $\mathsf B = \emptyset$ and $\mathcal C_o \subset \Lambda_N$ (thus, $\mathcal C_o$ is not connected to $\pare \Lambda_N$) we have that the valid realization for $\mathcal C_o$ satisfies $C_o \subset \mathsf Q_{\mathsf o}$. Therefore, in this case we have that 
$|h_{C_o} |\leq \sum_{x\in \mathsf Q_o}|h_x|$. 
For any $\delta>0$, there exists a constant $H=H(\delta, \mathsf q)$
such that 
\begin{equation}
\mbb P(\sum_{x\in \mathsf Q_o}|h_x|<H)>1-\frac{\delta}{2}.
\end{equation}
On the event $\sum_{x\in \mathsf Q_o}|h_x|<H$, we have $|h_{C_o}| \leq  \sum_{x\in \mathsf Q_o}|h_x|< H$. Thus,
\begin{equation}
\frac{\exp(-\frac{\eps}{T}h_{C_o})}{2\cosh(\frac{\eps}{T}h_{C_o})}
=\frac{1}{1+\exp(\frac{2\eps h_{C_0}}{T})}
<\frac{1}{1+\exp(-\frac{2\eps H}{T})}<\frac{e^{\theta_1/2}}{2}\,,
\end{equation}
where the last inequality holds by choosing $\epsilon$ small enough depending on $(T, \theta_1, \mathsf q, H)$.
Therefore, it suffices to prove
\begin{equation}\label{eq-small-region-disorder-to-show}
\sum_{o\in C_o, C_o\subset \mathsf Q_{\mathsf o}}\vfkh +(\Pi_{\emptyset, \emptyset}\cap \{\mcc C_o=C_o\})\leq e^{\theta_1/2}\sum_{o\in C_o, C_o\subset \mathsf Q_{\mathsf o}}\vfk +(\Pi_{\emptyset, \emptyset}\cap \{\mcc C_o=C_o\}).
\end{equation}
To this end, let $\mathrm{Shell}(\mathsf o, (\mathsf k, 2\mathsf k]) = \mathrm{Ball}(\mathsf o, 2\mathsf k) \setminus \mathrm{Ball}(\mathsf o, \mathsf k)$.
Note that  $\{\mathsf B=\emptyset, \mathsf B'=\emptyset, \mcc C_o\subset \mathsf Q_{\mathsf o}\}$ is the intersection of the following three events:\\
(i)  $\mathrm{Ball}(\mathsf o, 2 \mathsf k) \subset \mathtt{Red}$;\\
(ii) $\pare \mathrm{Ball}(\mathsf o, 2 \mathsf k)  \xleftrightarrow{\mathtt{Red}} \pari \Lambda_{\mathsf N}$: there is a red path connecting $\pare \mathrm{Ball}(\mathsf o, 2 \mathsf k)$ and $\pari \Lambda_{\mathsf N}$;\\
(iii) $\mcc C_o\subset \mathsf Q_{\mathsf o}$.

Therefore, \eqref{eq-small-region-disorder-to-show} follows if one can show that for $\mathsf q$ sufficiently large and $\epsilon$ sufficiently small (which may depend on $\mathsf q$), we have that with $\mathbb P$-probability at least $1 - \frac{\delta}{2}$:
\begin{align}
&\vfk+(\mathrm{Ball}(\mathsf o, 2\mathsf k) \subset \mathtt{Red}, \pare \mathrm{Ball}(\mathsf o, 2 \mathsf k)  \xleftrightarrow{\text{red}} \pari \Lambda_{\mathsf N} \mid \mcc C_o\subset \mathsf Q_{\mathsf o}, \mathrm{Shell}(\mathsf o, (\mathsf k, 2\mathsf k]) \subset \mathtt{Red}) \geq e^{-\theta_1 /4} \,, \label{eq-small-to-show-1}\\
&\vfkh+( \mcc C_o\subset \mathsf Q_{\mathsf o}, \mathrm{Shell}(\mathsf o, (\mathsf k, 2\mathsf k]) \subset \mathtt{Red}) \leq e^{\theta_1/4} \vfk+( \mcc C_o\subset \mathsf Q_{\mathsf o}, \mathrm{Shell}(\mathsf o, (\mathsf k, 2\mathsf k]) \subset \mathtt{Red})\,. \label{eq-small-to-show-2}
\end{align}

\noindent\emph{Proof of \eqref{eq-small-to-show-1}.} By finite energy property, there exists a constant $c_1 = c_1(T) >0$ such that 
\begin{equation}\label{eq-C-0-lower-bound}
\vfk+(\mcc C_o\subset \mathsf Q_{\mathsf o}\mid \mathrm{Shell}(\mathsf o, (\mathsf k, 2\mathsf k]) \subset \mathtt{Red})) \geq c_1\,.
\end{equation} In addition, by \eqref{eq-domination} and $\mathsf p_{\mathrm{aux}}\to_{\mathsf q\to\infty}1$, we have 
$$\vfk+(\mathrm{Ball}(\mathsf o, 2\mathsf k) \subset \mathtt{Red}, \pare \mathrm{Ball}(\mathsf o, 2 \mathsf k)  \xleftrightarrow{\text{red}} \pari \Lambda_{\mathsf N} ) \to 1$$
as $\mathsf q \to \infty$. This completes the proof of \eqref{eq-small-to-show-1} by noting that  for $\mathsf q$ large enough, 
\begin{align*}
&\vfk+((\{\mathrm{Ball}(\mathsf o, 2\mathsf k) \subset \mathtt{Red}\} \cap \{\pare \mathrm{Ball}(\mathsf o, 2 \mathsf k)   \xleftrightarrow{\text{red}} \pari \Lambda_{\mathsf N}\})^c \mid \mcc C_o\subset \mathsf Q_{\mathsf o},
\mathrm{Shell}(\mathsf o, (\mathsf k, 2\mathsf k])\subset \mathtt{Red})) \\
& \leq 2 c_1^{-1} \vfk+((\{\mathrm{Ball}(\mathsf o, 2\mathsf k) \subset \mathtt{Red}\} \cap \{\pare \mathrm{Ball}(\mathsf o, 2 \mathsf k)   \xleftrightarrow{\text{red}} \pari \Lambda_{\mathsf N}\})^c)\,.
\end{align*}

\noindent \emph{Proof of \eqref{eq-small-to-show-2}.} 
Let $\mathsf V_{\mathrm g}$ be the set of good vertices in $\Lambda_{\mathsf N}$. Let $\mathsf E$ be the collection of edges within $\mathsf Q_{\Lambda_{\mathsf N} \setminus  \mathrm{Ball}(\mathsf o, \mathsf k)}$ and let $\Omega^{\mathsf E} = \{0, 1\}^{\mathsf E}$. Note that the event  $\{\mathrm{Shell}(\mathsf o, (\mathsf k, 2\mathsf k]) \subset \mathsf V_{\mathrm g}\}$ is measurable with respect to $\omega|_{\mathsf E}$. Let $\Omega^{\mathsf E}_{\mathrm g}$ be the collection of $\omega^{\mathsf E} \in \Omega^{\mathsf E}$ such that $ \mathrm{Shell}(\mathsf o, (\mathsf k, 2\mathsf k]) \subset \mathsf V_{\mathrm g}$ holds. Let $\Omega(\omega^{\mathsf E}) = \{\omega \in \Omega: \omega|_{\mathsf E} = \omega^{\mathsf E}\}$. Then
$$\fkh+(\mcc C_o\subset \mathsf Q_{\mathsf o}, \mathrm{Shell}(\mathsf o, (\mathsf k, 2\mathsf k]) \subset \mathsf V_{\mathrm g})=\sum_{\omega^{\mathsf E} \in \Omega^{\mathsf E}_{\mathrm g}}\fkh+(\Omega(\omega^{\mathsf E}))\times \fkh+(\mcc C_o\subset \mathsf Q_o \mid \omega|_{\mathsf E} = \omega^{\mathsf E})\,.$$
We claim that there exists  $\eps_0=\eps_0(\mathsf q, \delta/2,\theta_1,T)$ such that for any $\eps<\eps_0$ the following holds with $\mathbb P$-probability at least $1-\delta/2$: for any  $\omega^{\mathsf E}\in \Omega^{\mathsf E}_{\mathrm g}$,
    \begin{equation}\label{eq:small-comparison-2}
        \fkh+(\mcc C_o\subset \mathsf Q_{\mathsf o}\mid \omega|_{\mathsf E} = \omega^{\mathsf E})\leq e^{\theta_1/12}\fk+(\mcc C_o\subset \mathsf Q_{\mathsf o} \mid \omega|_{\mathsf E} = \omega^{\mathsf E}).
    \end{equation}
We will give a detailed proof for a substantially harder version of \eqref{eq:small-comparison-2} in the proof of Proposition~\ref{prop-large-surface-estimation}. Thus, here we only provide an explanation on why \eqref{eq:small-comparison-2} holds without a formal proof.  Note that on $\omega|_{\mathsf E} = \omega^{\mathsf E}$, there is a unique mesh cluster in $\mathsf Q_{\mathrm{Shell}(\mathsf o, (\mathsf k, 2\mathsf k])}$ and as a result edges outside $\mathsf E$ cannot join two clusters in $\mathsf Q_{\Lambda_{\mathsf N} \setminus \mathrm{Ball}(\mathsf o, \mathsf k)}$ (with respect to $\omega^{\mathsf E}$) which both have diameters larger than $\mathsf q$. Thus, the influence from disorder only comes from the disorder in $\mathsf Q_{\mathrm{Ball}(\mathsf o, 2\mathsf k)}$ which vanishes as $\epsilon \to 0$. By \eqref{eq-domination}, we have 
\begin{equation}\label{eq-small-comparison-good-prob-close-to-1}
\fk+( \mathrm{Shell}(\mathsf o, (\mathsf k, 2\mathsf k]) \subset \mathsf V_{\mathrm g}) \to 1
\end{equation} as $\mathsf q\to \infty$. Therefore, with $\mathbb P$-probability at least $1-\delta/2$
 \begin{align*}
 &\fkh+( \mcc C_o\subset \mathsf Q_{\mathsf o}, \mathrm{Shell}(\mathsf o, (\mathsf k, 2\mathsf k]) \subset \mathsf V_{\mathrm g})  \leq \max_{\omega^{\mathsf E} \in \Omega^{\mathsf E}_{\mathrm g}}  \fkh+( \mcc C_o\subset \mathsf Q_{\mathsf o} \mid \omega|_{\mathsf E} = \omega^{\mathsf E})  \\
 &\stackrel{\eqref{eq:small-comparison-2}}{\leq}  e^{\theta_1/12}\max_{\omega^{\mathsf E} \in \Omega^{\mathsf E}_{\mathrm g}}  \fk+( \mcc C_o\subset \mathsf Q_{\mathsf o} \mid \omega|_{\mathsf E} = \omega^{\mathsf E}) \leq  e^{\theta_1/6} \fk+( \mcc C_o\subset \mathsf Q_{\mathsf o}) \\
 &\stackrel{\eqref{eq-C-0-lower-bound},\eqref{eq-small-comparison-good-prob-close-to-1}}{\leq} e^{\theta_1/4}\fk+( \mcc C_o\subset \mathsf Q_{\mathsf o},  \mathrm{Shell}(\mathsf o, (\mathsf k, 2\mathsf k]) \subset \mathsf V_{\mathrm g})
 \end{align*}
 where we have applied the spatial mixing property (\cite[Corollary 1.4]{DCGR20}) in the third inequality and we have chosen $\mathsf q$ large in the last inequality. By the definition of red, this completes the proof of \eqref{eq-small-to-show-2}.
\end{proof}

Most of the essential difficulties in this paper lie in Proposition \ref{prop-large-surface-estimation}: while the very basic intuition is similar to that for Proposition~\ref{prop-compare-small-L} where we demonstrated that the influence from disorder comes from a small region, substantial additional challenges arise since now $\mathsf B \cup \mathsf B'$ can be very large and we have to show that influence from disorder is mainly from a region whose volume is proportional to $\mathsf B \cup \mathsf B'$. Actually, the somewhat delicate definition of $\mathsf {(B,B')}\in \mathfrak B$ was set exactly for this purpose. 
For notational convenience, we define
\begin{align*}
   \mathsf{S}_1=\mathrm{Ball}(\mathsf B\cup \mathsf B',\mathsf k)\setminus (\mathsf{B\cup B'}) \mbox{ and }\mathsf{S}_2=\mathrm{Ball}(\mathsf B\cup \mathsf B',2\mathsf k)\setminus\mathrm{Ball}(\mathsf B\cup \mathsf B',\mathsf k)\,.
\end{align*}
Then $\mathrm{Ball}(\mathsf B\cup \mathsf B',2\mathsf k)$ is the disjoint union of $\mathsf B, \mathsf B', \mathsf{S}_1, \mathsf{S}_2$, and \eqref{eq-B-prime-blue} is equivalent to 
 $\{\mathsf B\cup \mathsf B'\subset \mathtt{Blue}\} \cap \{\mathsf{S}_1\cup \mathsf{S}_2\subset \mathtt{Red}\}$. In addition, note that
 $\mathsf B\cup \mathsf B'\cup \mathsf{S}_1=\mathrm{Ball}(\mathsf B\cup \mathsf B',\mathsf k)$ is a connected set, since $\mathsf{B}\cup \mathsf B'$ is $2\mathsf k$-connected. Besides, the following (obvious) bounds hold for $\mathsf{S}_1,\mathsf{S}_2$:
\begin{equation}
    |\mathsf{S}_1|\leq [(2 \mathsf k+1)^3-1]|\mathsf{B\cup B'}| \mbox{ and }
    |\mathsf{S}_2|\leq [(4\mathsf k+1)^3-(2\mathsf k+1)^3]|\mathsf{B\cup B'}|\leq 80\mathsf k^3|\mathsf{B\cup B'}| .\label{eq:volume-of-S}
\end{equation}
Here recall that $\mathsf k=4$ thus $(4\mathsf k+1)^3=(17/4)^3\mathsf k^3<80\mathsf k^3.$

Since red is a typical event, one may be convinced that most of the probability cost is from the event that $\{\mathsf{B\cup B'}\subset\mathtt{Blue}\}$ and as a result we need to carefully compare its measure with disorder to that without disorder.  As hinted earlier, an important philosophy is to show that essentially the influence comes from disorder in a thin shell. To this end, let $\mathtt E$ be the collection of edges within
$\mathsf Q_{\Lambda_{\mathsf N}\setminus (\mathsf B \cup \mathsf B'  \cup \mathsf{S}_1)}$. Write $\Omega^{\mathtt E} = \{0, 1\}^{\mathtt E}$ and define 
\begin{equation}\label{eq-def-Omega-B-B-prime-E}
 \Omega_{\mathsf B, \mathsf B'}^{ \mathtt E} = \{\omega^{\mathtt E} \in \Omega^{\mathtt E}:  \mathsf{Q_v} \mbox{ is good in }\omega \mbox{ for all }\mathsf v\in \mathsf{S_2} \mbox{ if }\omega|_\mathtt E = \omega^{\mathtt E}\}\,.
 \end{equation}
 
  Note that
 $\{\mathsf{S}_2\subset \mathtt{Red}\} = \{\mathsf v \mbox{ is good  and } \mathrm{Aux}_{\mathsf v} = 1\mbox{ for all } \mathsf v\in \mathsf{S}_2\}$.
The  event that each vertex in $\mathsf{S}_2$ is good is measurable with respect to the  $\sigma$-algebra of $\Omega^{\mathtt E}$, therefore it holds for every $\omega \in \Omega(\omega^{\mathtt E})$ with  $\omega^\mathtt E\in \Omega_{\mathsf B, \mathsf B'}^{\mathtt E}$ where $\Omega(\omega^{\mathtt E})$ is defined by
\begin{equation}\label{eq-def-Omega-omega-E}
\Omega(\omega^{\mathtt E})= \{\omega\in \Omega: \omega|_{\mathtt E}=\omega^{\mathtt E}\}\,.
\end{equation}  
For the event on the auxiliary variables, we recall the connections between red/blue, good/bad and the auxiliary variables and we define
 $$\mathcal A_{\mathsf B, \mathsf B'}=\{\mathrm{Aux}\in \{0,1\}^{\Lambda_{\mathsf N}}:\mathrm{Aux}_{\mathsf v}= 1\, \mbox{ for all } \mathsf v\in \mathsf{S}_2\}\,.$$
Then the event $\{\mathsf{S}_2\subset \mathtt{Red}\}$ happens for all $\omega \times \mathrm{Aux}$ with $\omega \in \Omega(\omega^\mathtt E)$,  $\omega^\mathtt E\in \Omega_{\mathsf B, \mathsf B'}^{\mathtt E}$ and $\mathrm{Aux}\in\mathcal A_{\mathsf{B,B'}}$. 
 
Next we would like to decompose the event $\{\mathsf B \cup \mathsf B' \subset\mathtt{Blue}\}$ according to different $\omega^{\mathtt E}$. 
For each $\mathrm{Aux}\in \mathcal A_{\mathsf B, \mathsf B'}$ and $\omega^{\mathtt E} \in \Omega^{\mathtt E}_{\mathsf B, \mathsf B'}$ we define $\Omega_{\mathrm {Aux}}(\omega^\mathtt E) $ to be the collection of $\omega\in \Omega(\omega^\mathtt E)$ such that (note that $\mathsf{Q_v}$ is good for $\mathsf v\in \mathsf{S}_2$ on all $\omega\in \Omega(\omega^\mathtt E)$)
\begin{align}\label{eq-def-Omega-Aux}
 \mathsf {Q_v}\text{ is bad if }\mathsf{v}\in \mathsf B\cup \mathsf B'\text{ and }\mathrm{Aux}_{\mathsf v}= 1\,.
\end{align}
Therefore,
$\{\mathsf{S}_2\subset \mathtt{Red}\}\cap\{\mathsf B\cup \mathsf B'\subset\mathtt{Blue}\}$ occurs on $\omega \times \mathrm{Aux}$ for all $\omega \in \Omega_{\mathrm {Aux}}(\omega^\mathtt E)$ and $\mathrm{Aux}\in\mathcal A_{\mathsf{B,B'}}$.

Now, we have for all $\epsilon$ (which in particular includes the case for $\epsilon = 0$)
\begin{align}
        &\vfkh+(\{\mathsf{S}_2\subset \mathtt{Red}\}\cap\{\mathsf B\cup \mathsf B'\subset\mathtt{Blue}\})\nonumber\\
    =&\sum_{\omega^\mathtt E\in\Omega^\mathtt E_{\mathsf B, \mathsf B'}}\vfkh+(\Omega(\omega^\mathtt E)\cap \{\mathsf{S}_2\subset \mathtt{Red}\}\cap \{\mathsf B\cup \mathsf B'\subset\mathtt{Blue}\})\nonumber\\
    =&\sum_{\omega^\mathtt E\in\Omega^\mathtt E_{\mathsf B, \mathsf B'}}\sum_{\mathrm{Aux}\in \mathcal{A}_{\mathsf B, \mathsf B'}} \mathsf P_{\mathrm{aux}}(\mathrm{Aux})\times\fkh+(\Omega_{\mathrm {Aux}}(\omega^\mathtt E)) \label{eq:localize-disorder-decomposition-configuration-version}
    \\
    =&\sum_{\omega^\mathtt E\in\Omega^\mathtt E_{\mathsf B, \mathsf B'}}\big(\fkh+(\Omega(\omega^\mathtt E))\times\sum_{\mathrm{Aux}\in \mathcal{A}_{\mathsf B, \mathsf B'} }\fkh+(\Omega_{\mathrm{Aux}}(\omega^\mathtt E) \mid \omega|_{\mathtt E} = \omega^\mathtt E)\times  \mathsf P_{\mathrm{aux}}(\mathrm{Aux})\big)\,,\label{eq:localize-disorder-decomposition-conditional-probability-version}
\end{align}
where $
\fkh+(\Omega_{\mathrm{Aux}}(\omega^\mathtt E)\mid \omega|_\mathtt E = \omega^\mathtt E)=\frac{\fkh+(\Omega_{\mathrm{Aux}}(\omega^\mathtt E))}{\fkh+(\Omega(\omega^{\mathtt E}))}$ is the conditional probability given boundary condition of $\omega^{\mathtt E}$ on $\mathtt {E}$.

Roughly speaking, we would like to control the influence of disorder via comparing  terms in \eqref{eq:localize-disorder-decomposition-conditional-probability-version} between cases with and without disorder (actually the disorder $\epsilon h$ will be  modified later according to certain rules into a new external field $\epsilon \tau_{\omega}(h)$ in order to facilitate this comparison). An important ingredient is to upper-bound  the ratio between
$\fkh+(\Omega_{\mathrm{Aux}}(\omega^\mathtt E)\mid \omega|_{\mathtt E} = \omega^\mathtt E)$ and
$\fk+(\Omega_{\mathrm{Aux}}(\omega^\mathtt E)\mid \omega|_{\mathtt E} = \omega^\mathtt E)$ for $\omega^\mathtt E \in \Omega^{\mathtt E}_{\mathsf B, \mathsf B'}$ and $\mathrm{Aux} \in \mathcal A_{\mathsf B, \mathsf B'}$. To this end, we next examine the boundary condition $\omega^\mathtt E$ more closely.  
\begin{definition}\label{def-B-B-prime-U}
For any $(\mathsf{B,B'})\in\mathfrak B$,
let $\mathfrak U=\{\mathsf U_1,\mathsf U_2,\cdots,\mathsf U_n\}$ be the collection of `holes' of $\mathsf B\cup \mathsf B'\cup \mathsf{S}_1$, that is, all the connected components of $\Lambda_{\mathsf N}\setminus(\mathsf B \cup \mathsf B'\cup \mathsf{S}_1)$ which are not connected to $\pari\Lambda_{\mathsf N}$. Let 
$U_j=\mathsf{Q}_{\mathsf U_j}$. In addition, let $\mathsf U_*$ be the connected component of $\Lambda_{\mathsf N}\setminus(\mathsf B \cup \mathsf B' \cup \mathsf{S}_1)$ which is connected to $\pari\Lambda_{\mathsf N}$ (where we identify $\pari\Lambda_{\mathsf N}$ as a single point), and we let $U_* = \mathsf Q_{\mathsf U_*}$.
\end{definition}
Since every $\mathsf U_j$ 
contains at least one point in $\mathsf{S_2}$, we get from 
 $\eqref{eq:volume-of-S}$ that
\begin{equation}
    \label{eq:number-of-U}
    n\leq |\mathsf{S}_2|\leq 80\mathsf k^3|\mathsf{B\cup B'}|.
\end{equation}
We now claim that each $\mathsf U_j$ is simply connected for $1\leq j\leq n$.
 Otherwise, assume  $\mathsf U_j$ is not simply connected for some $j$, that is, $\psi(\mathsf{U}_j)\setminus \mathsf{U}_j \neq \emptyset$. Since $\mathsf U_j$ and $\mathsf U_i$ are not connected for all $i \neq j$ and since $\mathsf B\cup \mathsf B'\cup \mathsf{S}_1$ is connected, there must exist  $\mathsf v\in\psi(\mathsf{U}_j)\setminus \mathsf{U}_j$ such that $\mathsf v\in \mathsf{B\cup B'\cup S_1}$. Thus, $\mathsf v\in\psi(\mathsf U_j)$, implying that $\mathsf{B\cup B'\cup S_1}\subset \psi(\mathsf U_j)$.  Therefore $\mathsf U_j$ is the connected component  which is connected to $\pari\Lambda_{\mathsf N}$, arriving at a contradiction.

By the preceding claim,
 each $U_j$ is simply connected. 
    In addition, for each $\mathsf U_j$, by by \cite[Lemma 2.1]{DP96} (see also \cite[Theorem 4]{Timar13}) we can take a connected set 
    $\partial_{\mathsf k}\mathsf U_j \subset \{\mathsf u\in\mathsf U_j:d_{\infty}(\mathsf u,\pare \mathsf U_j)\leq \mathsf k \}$ such that $ \{\mathsf u\in\mathsf U_j:d_{\infty}(\mathsf u,\pare \mathsf U_j) > \mathsf k \} \subset \psi(\partial_{\mathsf k}\mathsf U_j)$. Further, let $\partial_{\mathsf k}U_j =\mathsf Q_{\partial_{\mathsf k}\mathsf U_j}$.
    By definition, we know $\partial_{\mathsf k}\mathsf U_j\subset \mathsf{S}_2$. So, for $\omega\in\Omega(\omega^\mathtt E)$ with $\omega^{\mathtt E}\in\Omega^{\mathtt E}_{\mathsf{B,B'}}$,  each box $\mathsf{Q_v}$ for $\mathsf v\in\partial_{\mathsf k} \mathsf U_j$ is good. Thus, for each $j=1, \ldots, n$, on $\omega|_{U_j}$, 
    \begin{equation}\label{eq-def-C-from-U}
    \begin{split}
    \mbox{there is a unique cluster $C_j=C_j(\omega)\subset U_j$ so that $C_j|_{\mathsf Q_{\partial_{\mathsf k} \mathsf U_j}}$
     has a component with}\\
     \mbox{ diameter $\geq \mathsf q$
and also $C_j|_{\mathsf Q_{\partial_{\mathsf k} \mathsf U_j}}$ has a unique component with diameter $\geq \mathsf q$.}
    \end{split}
    \end{equation}
     (Note that different $C_j$'s might be connected via open edges in $\omega$ in the whole $\Lambda_N$,
      and we will treat such complication in Section~\ref{sec:localization-disorder} via the mapping  $\Psi: \mathfrak C^{\mathtt E} \mapsto \mathfrak C$ to be introduced before \eqref{eq-structure-C-*}). For $\mathsf U_*$, we take $\partial_{\mathsf k}\mathsf U_*$ similarly but in the outer layer of $\mathsf U_*$. To be precise, we take  a connected set $ \partial_{\mathsf k}\mathsf U_* \subset \mathrm{Ball}(\mathsf{B\cup B'},2\mathsf k)\setminus\psi(\mathrm{Ball}(\mathsf{B\cup B'},\mathsf k))$ so that $\mathsf U_* \subset \psi(\partial_{\mathsf k}\mathsf U_*)^c$. Note that  $\partial_{\mathsf k}\mathsf U_*\subset \mathsf {S_2}$. Clearly, on $\omega|_{\mathsf Q_{\mathsf U_*}}$, there also exists a unique cluster $C^{\mathtt E}_\diamond=C^{\mathtt E}_\diamond(\omega)\subset U_*$ (here we recall that $\pare\Lambda_N$ is all connected together via the wired boundary condition) whose restriction 
on $\mathsf Q_{\partial_{\mathsf k} \mathsf U_*}$
has a component with diameter  $\geq \mathsf q$ and in addition $C^{\mathtt E}_\diamond|_{\mathsf Q_{\partial_{\mathsf k} \mathsf U_*}}$ has a unique component with diameter  $\geq \mathsf q$. (Here the subscript $\diamond$ instead of $*$ indicates that $C^{\mathtt E}_\diamond$ may not be connected to $\pare \Lambda_N$; the superscript $\mathtt E$ is to emphasize that $C^{\mathtt E}_\diamond$ is within $\mathtt E$, which will be useful notation-wise in Section~\ref{sec:localization-disorder}.)
 In addition, while the realization for  $C^{\mathtt E}_\diamond$ and $\pare \lamn$  may depend on $\omega \in \Omega(\omega^{\mathtt E})$, the event $\{C^{\mathtt E}_\diamond\overset{}{\leftrightarrow}\partial_e \lamn\}$ only depends on $\omega^{\mathtt E}$ (since each $\mathsf v\in \mathsf S_2$ is good for all $\omega^{\mathtt E} \in \Omega_{\mathsf B, \mathsf B'}^{\mathtt E}$).

Recall that for $C\subset\lamn$ we denote by $h_C=\sum_{x\in C}h_x$. Since $|h_{C_j}|$'s may be substantially larger than $|\mathsf B| + |\mathsf B'|$, we see that when two of them have opposite signs the probability for them to be connected can be as small as $e^{-s}$ where $s \gg |\mathsf B| + |\mathsf B'|$. This calls for a Peierls mapping where we also flip the signs of disorder in certain regions; this is inspired by \cite{DZ21} but with additional complications in our context. To this end, let $\xi_j$ be the sign of $h_{C_{j}}$ for $j=1,\ldots,n$, let $\xi_\diamond$ be the sign of $h_{C^{\mathtt E}_\diamond}$, and define the mapping $\tau_{\omega^{\mathtt E}}: \mbb R^{\lamn}\rightarrow \mbb R^{\lamn}$ by
   \begin{equation}\label{eq-def-flipping}
    (\tau_{\omega^{\mathtt E}}(h))_x 
    =\left\{
    \begin{aligned}
    &\xi_j h_x
    \mbox{ for }x\in U_j, j=1\cdots, n, \mbox{ if }C^{\mathtt E}_\diamond\leftrightarrow\pare\lamn; \\ 
    &\xi_\diamond \xi_j h_x
    \mbox{ for }x\in U_j, j=1\cdots, n,\mbox{ if }C^{\mathtt E}_\diamond\nleftrightarrow \pare\lamn;\\
    & h_x 
    \mbox{ for } x\not\in \cup_{j=1}^n U_j.
    \end{aligned}\right.
    \end{equation}
    The preceding definition may be summarized as follows: if $C^{\mathtt E}_\diamond$ is connected to $\pare \Lambda_N$ on $\omega^{\mathtt E}$, we flip the external field in each $U_j$ such that every $(\tau_{\omega^\mathtt E}(h))_{C_j}\geq 0$, where $(\tau_{\omega^\mathtt E}(h))_ C=\sum_{x\in  C}\tau_{\omega^\mathtt E}(h)_x$ (which is simlar to $h_C$); if $C^{\mathtt E}_\diamond$ is not  connected to $\pare \Lambda_N$, we flip the external field in each $U_j$  such that $(\tau_{\omega^{\mathtt E}}(h))_{C_j}$ has the same sign with $(\tau_{\omega^{\mathtt E}}(h))_{C^{\mathtt E}_\diamond}$. This corresponds to the definition in \eqref{eq-def-FK-external-field} and will be useful in Section~\ref{sec:localization-disorder}. In what follows, we will usually write $\tau_{\omega}$ instead of $\tau_{\omega^{\mathtt E}}$ for simplicity.

We denote the FK-measure with the external field $\tau_{\omega}(h)$ by $\fkf + {\tau_{\omega}(h)}$.
Similarly we have the notation  $\vfkf+{\tau_{\omega}(h)}$. We next control the influence of such flipping of disorder. 

\begin{lem}\label{lem:flip-external-field-estimation}
For all $\omega\in\Omega_{\mathrm{Aux}}(\omega^{\mathtt E})$ with $\omega^\mathtt E\in\Omega_{\mathsf{B,B'}}^\mathtt E$,
\begin{align}\label{eq:flip-external-field-estimation}
\fkf+{h}(\omega) \leq  \frac{\parf {\tau_{\omega}(h)}}{\parz} e^{c \epsilon H_{\mathsf B, \mathsf B'}} \fkf+{\tau_{\omega}(h)}(\omega) 
\end{align}
where we define $H_{\mathsf  B, \mathsf B'} = \sum_{v\in \mathsf Q_{\mathrm{Ball}(\mathsf B\cup \mathsf B', 4\mathsf k)}} |h_v|$ and $c=c(T)>0$ is a constant.
\end{lem}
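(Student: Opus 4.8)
The plan is to write out the FK-Ising weights with and without the flip explicitly and compare them term by term, using the Edward--Sokal description in \eqref{eq-def-FK-external-field}. First I would recall that
$$
\fkf+{h}(\omega)=\frac{1}{\mathcal Z^+_\phi(\epsilon h)}\Big(\prod_{e}p^{\omega(e)}(1-p)^{1-\omega(e)}\Big)\,W_h(\omega),\qquad
W_h(\omega)=\Big(\prod_{j}2\cosh(\tfrac{\epsilon}{T}h_{\mcc C_j})\Big)\exp(\tfrac{\epsilon}{T}h_{\mcc C_*}),
$$
and similarly for $\tau_\omega(h)$ in place of $h$, the product over $e$ being identical in both. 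Since $\tau_\omega$ only rescales the disorder inside $\cup_{j=1}^n U_j$ by signs $\pm 1$, we have $(\tau_\omega(h))_x^2=h_x^2$ for all $x$, so the flip is a measure-preserving bijection on $\mathbb R^{\Lambda_N}$; thus $\mathcal Z^+_\phi(\epsilon h)$ and $\mathcal Z^+_\phi(\epsilon\tau_\omega(h))$ differ only through the cosh/exp weights, and the ratio $\parf{\tau_\omega(h)}/\parz$ is exactly the factor that will appear on the right. So the lemma reduces to the pointwise claim
$$
W_h(\omega)\ \le\ e^{c\epsilon H_{\mathsf B,\mathsf B'}}\,W_{\tau_\omega(h)}(\omega)
$$
for every $\omega\in\Omega_{\mathrm{Aux}}(\omega^{\mathtt E})$.

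To prove this pointwise bound I would split the open clusters of $\omega$ in $\Lambda_N$ into those that are entirely contained in some $U_j$ (equivalently, disjoint from $\mathsf S_2$-boxes in a way forced by the good-box structure), and those meeting $\mathtt{E}$. For the first type, $\tau_\omega$ multiplies the cluster sum by $\xi_j$ (or $\xi_\diamond\xi_j$), and since $\cosh$ is even, $2\cosh(\tfrac\epsilon T h_{\mcc C})=2\cosh(\tfrac\epsilon T (\tau_\omega(h))_{\mcc C})$ exactly — these contribute a factor $1$. The subtle part is the large cluster(s) crossing between several $U_j$'s and $\mathtt E$: here $C^{\mathtt E}_\diamond$ and the various $C_j$ may merge in $\omega$ into one cluster $\mcc C$, and $h_{\mcc C}=h_{\mcc C\cap \mathtt E^{\mathrm c}}+\sum_j \epsilon$-contributions; the point of the sign choice in \eqref{eq-def-flipping} is precisely that, after flipping, the $U_j$-parts align in sign with the $C^{\mathtt E}_\diamond$-part, so $|(\tau_\omega(h))_{\mcc C}|\ge |h_{\mcc C}|$ minus only the contribution of the \emph{residual} vertices in $\mcc C\cap U_j$ that do not lie in $C_j$. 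These residual vertices are exactly the ones whose fields are not forced to have a definite sign, and the $\ell^1$-mass of disorder over all such vertices is at most $\sum_{v\in\mathsf Q_{\mathrm{Ball}(\mathsf B\cup\mathsf B',4\mathsf k)}}|h_v| = H_{\mathsf B,\mathsf B'}$ (using that $\cup_j U_j\subset \mathsf Q_{\mathrm{Ball}(\mathsf B\cup\mathsf B',2\mathsf k)}$ and similarly $C^{\mathtt E}_\diamond$, with room to spare). Then using the elementary inequality $2\cosh a\le e^{|a-b|}\,2\cosh b$ and $e^{a}\le e^{|a-b|}e^{b}$, and summing the exponents over the at most $O(1)$ relevant clusters, one gets $W_h(\omega)\le e^{\frac{c'\epsilon}{T}H_{\mathsf B,\mathsf B'}}W_{\tau_\omega(h)}(\omega)$ with $c'$ absolute; absorbing $1/T$ into $c=c(T)$ gives the stated form.

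The main obstacle I anticipate is the bookkeeping around how the clusters $C_j,C^{\mathtt E}_\diamond$ (defined via the good-box/diameter criterion inside each $U_j$) relate to the actual open clusters $\mcc C$ of $\omega$ in all of $\Lambda_N$: a priori several $C_j$ can be joined by open edges outside the good-box core, and an open cluster can pick up extra vertices near the boundaries of the $U_j$'s. This is flagged in the excerpt itself (the mapping $\Psi:\mathfrak C^{\mathtt E}\mapsto\mathfrak C$ promised in Section~\ref{sec:localization-disorder}), so I would import that structural description: each honest $\omega$-cluster decomposes into one "backbone" component (the $C^{\mathtt E}_\diamond$-piece together with the $C_j$-pieces it connects to) plus a controlled set of residual vertices all lying within $\mathsf Q_{\mathrm{Ball}(\mathsf B\cup\mathsf B',2\mathsf k)}$, and the sign-flip \eqref{eq-def-flipping} only disagrees with $h$ on the backbone's $U_j$-parts whose sign is already pinned to $\xi_j$ or $\xi_\diamond\xi_j$. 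Once that decomposition is in hand, the inequality $|(\tau_\omega(h))_{\mcc C}-h_{\mcc C}|\le 2\sum_{v\in \mathsf Q_{\mathrm{Ball}(\mathsf B\cup\mathsf B',4\mathsf k)}}|h_v|$ for each of the finitely many affected clusters is immediate, and the constant is uniform in $\mathsf q$ and $N$ because the number of affected clusters is bounded by a constant (the $C_j$ merging only within the $O(\mathsf k^3)$-bounded geometry around $\mathsf B\cup\mathsf B'$).
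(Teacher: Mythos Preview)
Your overall strategy matches the paper's: reduce to the pointwise inequality $W_h(\omega)\le e^{c\epsilon H_{\mathsf B,\mathsf B'}}W_{\tau_\omega(h)}(\omega)$, split the $\omega$-clusters according to how they sit relative to the $U_j$'s, and use the sign alignment in \eqref{eq-def-flipping} together with the good-box structure to control the residual. But there are two concrete gaps in the execution.

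First, the claim $\cup_j U_j\subset \mathsf Q_{\mathrm{Ball}(\mathsf B\cup\mathsf B',2\mathsf k)}$ is false: the $\mathsf U_j$'s are the connected components of $\Lambda_{\mathsf N}\setminus(\mathsf B\cup\mathsf B'\cup\mathsf S_1)$ not touching $\pari\Lambda_{\mathsf N}$ (Definition~\ref{def-B-B-prime-U}), and they can be arbitrarily large. Consequently, for a cluster $\mathcal C$ that contains some flipped $C_i$, the difference $|h_{\mathcal C}-(\tau_\omega(h))_{\mathcal C}|$ can be of order $|h_{C_i}|$, which has nothing to do with $H_{\mathsf B,\mathsf B'}$; the inequality $2\cosh a\le e^{|a-b|}\,2\cosh b$ is therefore useless here. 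What is needed (and what you correctly stated one sentence earlier) is the one-sided bound $|h_{\mathcal C}|\le|(\tau_\omega(h))_{\mathcal C}|+(\text{residual})$, and then $\cosh a\le e^{\max(|a|-|b|,0)}\cosh b$. The residual is $\mathcal C\setminus\mathtt C$ with $\mathtt C=\cup_i C_i$, and the reason it lies in $\mathrm{Shell}=\mathsf Q_{\mathrm{Ball}(\mathsf B\cup\mathsf B',4\mathsf k)}$ is \emph{not} that the $U_j$ are small but that any component of $\mathcal C\setminus\mathtt C$ must cross some $\mathsf Q_{\partial_{\mathsf k}\mathsf U_j}$ (all good boxes) and hence has diameter at most $\mathsf q$; this is exactly \eqref{eq:structure-of-C} and \eqref{eq-structure-C-*}.

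Second, the number of ``affected'' clusters is not bounded by a constant: there are up to $n\le 80\mathsf k^3|\mathsf B\cup\mathsf B'|$ pieces $C_i$ (see \eqref{eq:number-of-U}) and they need not all merge into a single $\omega$-cluster. The paper does not bound this count; instead it localizes the residual to $\mathrm{Shell}\cap\mathcal C$ and uses that these sets are pairwise disjoint as $\mathcal C$ ranges over $\mathfrak C$, so summing $c\epsilon\sum_{v\in\mathrm{Shell}\cap\mathcal C}|h_v|$ over all clusters yields exactly $c\epsilon H_{\mathsf B,\mathsf B'}$. With these two fixes your argument becomes essentially the proof in Section~\ref{sec:proof-lem-flip-external-field-estimation}.
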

The proof of Lemma~\ref{lem:flip-external-field-estimation} combines \eqref{eq-partition-function-equal}, \eqref{eq-def-FK-external-field} and a careful analysis of  clusters in $\omega$.
\begin{lem}\label{lem:comparez}
	There exists $c_{\mathrm z}>0$ such that with $\mbb P$-probability at least $1-\exp(-\frac{c_{\mathrm z}}{\eps})$ the following holds: for all $\mathsf {(B,B')}\in \mathfrak B$ and for all $\omega\in\Omega_{\mathrm{Aux}}(\omega^{\mathtt E})$ with $\omega^\mathtt E\in\Omega_{\mathsf{B,B'}}^\mathtt E$
	\begin{equation}
			\frac{\parf {\tau_{\omega(h)}}}{\parz} \leq e^{\frac{81 \mathsf k^3 \mathsf q^3\sqrt{\eps}}{T}\cdot |\mathsf B \cup \mathsf B'|} \mbox{ and } H_{\mathsf B, \mathsf B'} \leq \epsilon^{-1/2}\mathsf q^3 |\mathsf B \cup \mathsf B'|\,.\label{eq:compare-partition-z}
	\end{equation}
\end{lem}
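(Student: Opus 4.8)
The plan is to prove the two bounds in \eqref{eq:compare-partition-z} separately, with the second being essentially a tail estimate for sums of $|h_v|$ and the first being the genuinely substantive claim. For the bound $H_{\mathsf B, \mathsf B'} \le \epsilon^{-1/2}\mathsf q^3 |\mathsf B\cup\mathsf B'|$: recall $H_{\mathsf B, \mathsf B'} = \sum_{v\in \mathsf Q_{\mathrm{Ball}(\mathsf B\cup\mathsf B',4\mathsf k)}}|h_v|$, and the number of lattice sites in $\mathsf Q_{\mathrm{Ball}(\mathsf B\cup\mathsf B',4\mathsf k)}$ is at most $(8\mathsf k+1)^3\mathsf q^3 |\mathsf B\cup\mathsf B'|$ (each coarse vertex inflates to a box of $\le (2\mathsf q+1)^3 \le 8\mathsf q^3$ sites, and $\mathrm{Ball}(\cdot,4\mathsf k)$ multiplies the coarse count by $(8\mathsf k+1)^3$). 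Since $\mathbb E|h_v| = \sqrt{2/\pi}$ and the $|h_v|$ are i.i.d.\ subgaussian, a standard concentration bound (Hoeffding/Bernstein for subexponential variables) gives that $\sum_{v\in A}|h_v| \le \tfrac12 \epsilon^{-1/2}|A|$ fails with probability at most $\exp(-c|A|\epsilon^{-1})$ once $\epsilon$ is small; taking a union bound over all connected $\mathsf B\cup\mathsf B'$ containing $\mathsf o$ (there are at most $C^{L}$ such sets of size $L$, by the standard greedy/animal counting bound) the total failure probability is $\sum_L C^L \exp(-c' L \epsilon^{-1}\mathsf q^3)$, which is at most $\exp(-c_{\mathrm z}/\epsilon)$ for small $\epsilon$. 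This handles the second inequality uniformly in $(\mathsf B,\mathsf B')$.

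For the first inequality, the ratio $\parf{\tau_\omega(h)}/\parz$ compares the FK partition function with the modified external field $\epsilon\tau_\omega(h)$ to that with no field. By \eqref{eq-partition-function-equal}, up to the field-independent factor $\exp(-\tfrac1T|E|)$ this ratio equals the ratio of Ising partition functions $\mathcal Z_{\mu_{\epsilon\tau_\omega(h)}}/\mathcal Z_{\mu_0}$. This is exactly the quantity analyzed in \cite{DZ21}: one writes $\mathcal Z_{\mu_{\epsilon\tau_\omega(h)}}/\mathcal Z_{\mu_0} = \mathbb E_{\mu_0}[\exp(\tfrac\epsilon T \sum_x \tau_\omega(h)_x \sigma_x)]$, and the key point is that $\tau_\omega(h)$ differs from $h$ only on $\cup_j U_j \subset \mathsf Q_{\mathrm{Ball}(\mathsf B\cup\mathsf B',2\mathsf k)}$ (a region of volume $\le 80\mathsf k^3\mathsf q^3|\mathsf B\cup\mathsf B'| \cdot (\text{const})$), where the sign of $h_x$ is flipped blockwise. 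So I would bound the log of the ratio by combining: (i) the general greedy-lattice-animal estimate from \cite{Chalker83, FFS84, DZ21} which controls $\log \mathbb E_{\mu_0}[\exp(\tfrac\epsilon T\sum_x h_x\sigma_x)]$ for the \emph{original} field and more relevantly controls the change when one flips signs in a connected region, yielding a bound of the form $\delta_\epsilon \times (\text{volume of the flip region})$ with $\delta_\epsilon\to 0$; and (ii) the crude worst-case bound: since $\sigma_x\in\{\pm1\}$, flipping $\tau_\omega$ back to $h$ changes the exponent by at most $\tfrac{2\epsilon}{T}\sum_{x\in\cup_j U_j}|h_x| \le \tfrac{2\epsilon}{T} H_{\mathsf B,\mathsf B'}$, so on the event from the second inequality this is $\le \tfrac{2}{T}\epsilon^{1/2}\mathsf q^3|\mathsf B\cup\mathsf B'|$. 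Tracking constants ($2 \cdot 80\mathsf k^3 < 162\mathsf k^3$, and noting $\sqrt\epsilon$ absorbs the constant discrepancy for small $\epsilon$, or adjusting the exponent), this gives the claimed $e^{(81\mathsf k^3\mathsf q^3\sqrt\epsilon/T)|\mathsf B\cup\mathsf B'|}$ after taking $\epsilon$ small; the union bound over $(\mathsf B,\mathsf B')$ is again absorbed at cost $\exp(-c_{\mathrm z}/\epsilon)$.

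The main obstacle is making precise the claim that the \emph{ratio} $\parf{\tau_\omega(h)}/\parz$ — as opposed to $\parf{h}/\parz$ — admits a clean bound; the subtlety is that $\tau_\omega$ depends on $\omega^{\mathtt E}$ (through the clusters $C_j, C^{\mathtt E}_\diamond$ and their signs), so the bound must hold \emph{simultaneously} for all admissible $\omega^{\mathtt E}$, not just for a fixed realization of the field. Fortunately $\tau_\omega(h)_x \in \{h_x, -h_x\}$ pointwise and the set where the sign is flipped is always contained in $\mathsf Q_{\mathrm{Ball}(\mathsf B\cup\mathsf B',2\mathsf k)}$; so the relevant event is "for this fixed $h$, $\sum_{x\in \mathsf Q_{\mathrm{Ball}(\mathsf B\cup\mathsf B',4\mathsf k)}}|h_x|$ is not too big," which is $\omega$-independent, and this is precisely why the statement is phrased as a $\mathbb P$-event on $h$ alone. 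The deferral in the paper ("which essentially follows from \cite{Chalker83, FFS84, DZ21}") signals that the $\delta_\epsilon$-type bound on the partition-function ratio is imported wholesale; the new work here is just the bookkeeping of the coarse-grained volumes ($\mathsf q^3$ factors from inflating coarse vertices to boxes) and the union bound over the combinatorially-many choices of $(\mathsf B,\mathsf B')$, for which the greedy-lattice-animal counting already used in \cite{DZ21} suffices.
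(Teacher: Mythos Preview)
Your proposal has a genuine gap stemming from a misreading of the geometry. You write that ``$\tau_\omega(h)$ differs from $h$ only on $\cup_j U_j \subset \mathsf Q_{\mathrm{Ball}(\mathsf B\cup\mathsf B',2\mathsf k)}$'', but this inclusion is false: by Definition~\ref{def-B-B-prime-U} the $\mathsf U_j$ are the \emph{holes} of $\mathsf B\cup\mathsf B'\cup\mathsf S_1$, i.e.\ the connected components of $\Lambda_{\mathsf N}\setminus(\mathsf B\cup\mathsf B'\cup\mathsf S_1)$ enclosed by the blue boundary. These can have volume vastly larger than $|\mathsf B\cup\mathsf B'|$ (think of a thin blue shell enclosing a macroscopic region). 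Consequently your step (ii), bounding $\tfrac{2\epsilon}{T}\sum_{x\in\cup_j U_j}|h_x|$ by $\tfrac{2\epsilon}{T}H_{\mathsf B,\mathsf B'}$, is simply wrong. Relatedly, you refer to $\parz$ as the partition function ``with no field'', but $\parz = \mathcal Z^+_\phi(\epsilon h)$ is the partition function with field $\epsilon h$; the ratio in question compares two nonzero fields that differ by sign-flips on the (potentially huge) holes.

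What actually makes the lemma work is the \cite{DZ21} mechanism you only gesture at: when one flips the field on a \emph{simply connected} region $A$, the ratio $\mathcal Z^+(\epsilon h^A)/\mathcal Z^+(\epsilon h)$ is controlled not by $|A|$ but by the \emph{surface} $|\partial A|$, and it is the surfaces $\partial U_j$ that sit inside the shell and have total size $O(\mathsf q^3 L)$. Two complications remain that your sketch does not address. First, $\cup_{j\in I}U_j$ is in general neither connected nor simply connected, so the \cite{DZ21} estimate does not apply directly; the paper fixes this by adjoining a short path $P$ (length $\le 80\mathsf k^3 L\mathsf q$) through the shell to make $\cup_j U_j\cup P$ simply connected, applies \cite[Equation~(18)]{DZ21} to get the bound $e^{\sqrt\epsilon\,\mathsf q^3\mathsf k^3 L/T}$ with failure probability $e^{-2c_{\mathrm z}L/\epsilon}$, and then removes $P$ at cost $\sum_{v\in P}|h_v|$, which \emph{is} small since $P$ is short. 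Second, one must union-bound not only over $(\mathsf B,\mathsf B')$ but also over the $2^n$ choices of which $U_j$'s get flipped (since $\tau_\omega$ depends on the signs $\xi_j$); this costs a harmless factor $2^n\le 2^{80\mathsf k^3 L}$. Your treatment of the second inequality on $H_{\mathsf B,\mathsf B'}$ is fine and matches the paper.
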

Lemma~\ref{lem:comparez} is an extension of \cite{Chalker83, FFS84, DZ21} and its proof is included in Section~\ref{sec:coarse-graining-proof}.

In light of \eqref{eq:localize-disorder-decomposition-configuration-version} and \eqref{eq:flip-external-field-estimation}, a key ingredient is to compare $\fkf+{\tau_{\omega}(h)}(\Omega_{\mathrm{Aux}}(\omega^\mathtt E)\mid \omega|_{\mathtt E} = \omega^\mathtt E)$ with $\fk+(\Omega_{\mathrm{Aux}}(\omega^\mathtt E)\mid \omega|_{\mathtt E} = \omega^\mathtt E)$, as incorporated in the following two propositions. 

\begin{prop}\label{prop:inside-M-event-h-and-no-h}
    For any  $\delta>0$, there exists a constant $\mathsf q_0 = \mathsf q_0(\delta,T)$ large enough such that for any $\mathsf q>\mathsf q_0$, there exists an $\eps_1=\eps_1(\mathsf q,\delta,T)$ such that the following holds for any $\eps<\eps_1$. With $\mbb P$-probability at least $1-\delta$, for any $\mathsf{(B,B')}\in \mathfrak B$ and any $\omega^\mathtt E\in\Omega_{\mathsf{B,B'}}^\mathtt E $, we have
    \begin{align}
    \fkf+{\tau_\omega(h)}(\Omega_{\mathrm{Aux}}(\omega^{\mathtt E})\mid\omega|_{\mathtt E}= \omega^\mathtt E)\leq e^{100 \mathsf k^3|\mathsf B \cup \mathsf B'|}\fk+(\Omega_{\mathrm{Aux}}(\omega^{\mathtt E})\mid \omega|_{\mathtt E}=\omega^\mathtt E)\,.
    \label{eq:inside-M-event-h-and-no-h}
    \end{align}
\end{prop}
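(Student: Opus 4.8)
The plan is to condition on the boundary configuration $\omega^{\mathtt E}$ on $\mathtt E$ and then compare, configuration by configuration on the remaining (``free'') edges, the cluster weights of $\fkf+{\tau_\omega(h)}$ against those of $\fk+$; the disorder will be seen to enter only through a region of volume $O(\mathsf q^3|\mathsf B\cup\mathsf B'|)$, while the genuinely delicate cost is combinatorial, coming from rearranging the at most $O(\mathsf k^3|\mathsf B\cup\mathsf B'|)$-many \emph{large} clusters through that region. Concretely: since $\mathtt E$ consists of the edges inside $\mathsf Q_{\Lambda_{\mathsf N}\setminus(\mathsf B\cup\mathsf B'\cup\mathsf S_1)}$, once we fix $\omega|_{\mathtt E}=\omega^{\mathtt E}$ the free edges all lie in $\mathsf F:=\mathsf Q_{\mathsf B\cup\mathsf B'\cup\mathsf S_1}$ and the flipped field $\tau_\omega(h)=\tau_{\omega^{\mathtt E}}(h)$ is fixed. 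Writing, for $\omega$ extending $\omega^{\mathtt E}$,
\[
R(\omega)=\prod_{\mcc C\ \text{cluster of}\ \omega}\frac{w^{\epsilon\tau_\omega(h)}(\mcc C)}{w^0(\mcc C)},\qquad w^{\epsilon g}(\mcc C)=\begin{cases}2\cosh(\epsilon g_{\mcc C}/T),&\mcc C\ne\mcc C_*,\\ \exp(\epsilon g_{\mcc C}/T),&\mcc C=\mcc C_*,\end{cases}
\]
the clusters of $\omega$ not meeting $\mathsf F$ are determined by $\omega^{\mathtt E}$ and cancel between numerator and denominator of $R$, so $R$ is a function of the free edges only, and a short computation reduces the Proposition (for fixed $\omega^{\mathtt E}\in\Omega^{\mathtt E}_{\mathsf B,\mathsf B'}$) to $\mathbb E_{\fk+(\cdot\mid\omega^{\mathtt E})}[R\mid\Omega_{\mathrm{Aux}}(\omega^{\mathtt E})]\le e^{100\mathsf k^3|\mathsf B\cup\mathsf B'|}\,\mathbb E_{\fk+(\cdot\mid\omega^{\mathtt E})}[R]$. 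As $\mathrm{Aux}$ is an independent product and $R$ does not depend on it, this in turn reduces to $\mathbb E_{\fk+(\cdot\mid\omega^{\mathtt E})}[R\mid B_S]\le e^{100\mathsf k^3|\mathsf B\cup\mathsf B'|}\,\mathbb E_{\fk+(\cdot\mid\omega^{\mathtt E})}[R]$ for every $S\subset\mathsf B\cup\mathsf B'$, where $B_S=\{\mathsf Q_{\mathsf v}\ \text{bad}\ \forall\,\mathsf v\in S\}$, an event measurable with respect to the free edges.

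\emph{Structure of the $\mathtt E$-clusters meeting $\mathsf F$.} First I would use that every $\mathsf Q_{\mathsf v}$ with $\mathsf v\in\mathsf S_2$ is good (which is precisely $\omega^{\mathtt E}\in\Omega^{\mathtt E}_{\mathsf B,\mathsf B'}$) together with the connectedness of the layers $\partial_{\mathsf k}\mathsf U_j,\partial_{\mathsf k}\mathsf U_*\subset\mathsf S_2$ to show that the $\mathtt E$-clusters touching $\mathsf F$ fall into: (a) at most $n+1\le 81\mathsf k^3|\mathsf B\cup\mathsf B'|$ \emph{large} ones, namely the clusters $C_1,\dots,C_n$ inside the holes $U_1,\dots,U_n$ and the cluster $C^{\mathtt E}_\diamond$ inside $U_*$, whose $h$-sums may be enormous; and (b) \emph{small} ones, of diameter $<\mathsf q$, hence contained in $\mathsf Q_{\mathrm{Ball}(\mathsf B\cup\mathsf B',\mathsf k+1)}$, so that the total of $|h_x|$ over all of them is at most $H_{\mathsf B,\mathsf B'}$. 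In particular no large $\mathtt E$-cluster other than these can reach $\mathsf F$ (for instance the $\mathtt E$-cluster of $\pare\Lambda_N$, when distinct from $C^{\mathtt E}_\diamond$, is trapped outside $\mathsf Q_{\partial_{\mathsf k}\mathsf U_*}$ by the good-box property).

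\emph{Telescoping over the free edges.} Next, add the free edges one at a time; $R$ changes only at a merge of two current clusters, and if the sums $\epsilon(\tau_\omega(h))_{\cdot}/T$ of the two clusters are $a,b$, the factor equals $\cosh(a+b)/(\cosh(a)\cosh(b))\le 2$ when neither is $\mcc C_*$ and $\exp(b)/\cosh(b)\le 2$ when one of them is $\mcc C_*$. For a small--small merge this factor is within $e^{\pm C|a||b|}$ of $1$; for a large--small merge it is within $e^{\pm C|b|}$ of $1$ with $|b|$ small; for any merge with a large cluster on each side, or involving $\mcc C_*$, it lies in $(0,2]$ regardless of magnitudes, using the specific sign rule in $\tau_\omega$ of \eqref{eq-def-flipping}, tailored to the $\exp$-weight of $\mcc C_*$ in \eqref{eq-def-FK-external-field}. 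Summing, the small--small merges cost $\exp\!\bigl(O((\epsilon/T)^2\mathsf q^3\sum_{x\in\mathsf Q_{\mathrm{Ball}(\mathsf B\cup\mathsf B',\mathsf k+1)}}h_x^2)\bigr)$, the large--small merges cost $\exp(O((\epsilon/T)H_{\mathsf B,\mathsf B'}))$, and the at most $81\mathsf k^3|\mathsf B\cup\mathsf B'|$ large merges cost at most $2^{81\mathsf k^3|\mathsf B\cup\mathsf B'|}=e^{(81\ln 2)\mathsf k^3|\mathsf B\cup\mathsf B'|}$. On the $\mathbb P$-event of Lemma~\ref{lem:comparez} (giving $H_{\mathsf B,\mathsf B'}\le\epsilon^{-1/2}\mathsf q^3|\mathsf B\cup\mathsf B'|$ for all $(\mathsf B,\mathsf B')$) intersected with a chi-square plus union-bound estimate $\sum_{x\in\mathsf Q_{\mathrm{Ball}(\mathsf B\cup\mathsf B',\mathsf k+1)}}h_x^2\le C\mathsf q^3|\mathsf B\cup\mathsf B'|$ holding simultaneously over all $(\mathsf B,\mathsf B')$ with probability $\ge 1-\delta$, taking $\mathsf q$ large and then $\epsilon$ small (depending on $\mathsf q$) makes the first two costs at most $e^{|\mathsf B\cup\mathsf B'|}$ each; since $81\ln 2+O(1)<100$ and $\mathsf k=4$ this gives the exponent, \emph{provided} comparing $\mathbb E[R\mid B_S]$ with $\mathbb E[R]$ is no worse than comparing $R$-values across configurations with the same large-cluster connectivity pattern.

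\emph{Main obstacle.} This last proviso is the crux: $R$ genuinely fluctuates between configurations according to which large $\mathtt E$-clusters are joined to one another and to $\pare\Lambda_N$ through $\mathsf F$ --- the factor $\exp(\epsilon(\tau_\omega(h))_{\mcc C_*}/T)$ can differ by a huge amount --- so one cannot bound $R(\omega)$ against a fixed reference (the all-closed configuration would lose a factor $(1-p)^{-|\mathsf F|}=e^{O(\mathsf q^3|\mathsf B\cup\mathsf B'|)}$, which has no $\epsilon$ to absorb it). Instead one must show that conditioning on $B_S$, an event about $\omega$ inside $\mathsf Q_{\mathsf B\cup\mathsf B'}$, does not appreciably reweight the large-cluster connectivity, whose governing part of the configuration is separated from $\mathsf B\cup\mathsf B'$ by the \emph{good} layers $\partial_{\mathsf k}\mathsf U_j$ and $\partial_{\mathsf k}\mathsf U_*$ (width $\mathsf k$, hence lattice-distance $\gtrsim\mathsf k\mathsf q$). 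This is where the spatial mixing of the FK-Ising model (\cite{DCGR20,Alexander98}) and the auxiliary variables enter --- the latter make ``blue'' robust under exactly this conditioning, cf.\ Proposition~\ref{prop:0-field-case-connecting-and-S2-red} --- so that decoupling each of the $O(\mathsf k^3|\mathsf B\cup\mathsf B'|)$ bad-box constraints from the far-away large-cluster structure costs at most a bounded factor apiece, again absorbed into $e^{O(\mathsf k^3|\mathsf B\cup\mathsf B'|)}$. Making this decoupling uniform over all $(\mathsf B,\mathsf B')$ and all $\omega^{\mathtt E}\in\Omega^{\mathtt E}_{\mathsf B,\mathsf B'}$, while reconciling it with the $\mcc C_*$ bookkeeping above, is where the essential work lies.
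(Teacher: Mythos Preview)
Your setup is right --- reducing to the pointwise ratio $R(\omega)$ of cluster weights and classifying the $\mathtt E$-clusters touching $\mathsf F$ into the $n+1$ large ones $C_1,\dots,C_n,C^{\mathtt E}_\diamond$ and small ones inside the shell is exactly how the paper proceeds. The gap is in your ``main obstacle'' paragraph: you state that large--large merges and merges with $\mcc C_*$ give a factor in $(0,2]$, and then conclude that $R$ fluctuates wildly with the connectivity pattern, forcing a spatial-mixing/decoupling argument. In fact the sign rule in $\tau_{\omega^{\mathtt E}}$ gives a \emph{two-sided} bound: when $C^{\mathtt E}_\diamond\leftrightarrow\pare\Lambda_N$ all $(\tau_{\omega^{\mathtt E}}(h))_{C_i}\ge 0$, and when $C^{\mathtt E}_\diamond\nleftrightarrow\pare\Lambda_N$ all $(\tau_{\omega^{\mathtt E}}(h))_{C_i}$ share the sign of $h_{C^{\mathtt E}_\diamond}$ while $\mcc C_*=\mcc C^{\mathtt E}_*$ is frozen (so the $\exp$ term never changes). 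Hence every large--large merge contributes $\cosh(a+b)/(\cosh a\cosh b)=1+\tanh a\tanh b\in[1,2]$ since $a,b$ have the same sign, and every absorption of $C_i$ into $\mcc C_*$ contributes $e^b/\cosh b\in[1,2]$ since $b\ge 0$. Combined with your own $e^{\pm C\epsilon H_{\mathsf B,\mathsf B'}}$ control on the small pieces, this shows that $R(\omega)$ lies in a fixed window of width $e^{O(n)+O(\epsilon H_{\mathsf B,\mathsf B'})}$ for \emph{every} $\omega$ extending $\omega^{\mathtt E}$. That immediately gives $\mathbb E[R\mid B_S]/\mathbb E[R]\le e^{O(n)+O(\epsilon H_{\mathsf B,\mathsf B'})}$ with no decoupling needed, and your obstacle evaporates.

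This is essentially how the paper argues, though organized differently: rather than telescoping, it introduces the fixed reference $\mathrm{Ratio}(\omega^{\mathtt E},h)$ (your $R$ evaluated on the configuration with all free edges closed, i.e.\ on the $\mathtt E$-clusters themselves) and proves directly that $|\log R(\omega)-\log\mathrm{Ratio}(\omega^{\mathtt E},h)|\le c_7 n + c_6\epsilon H_{\mathsf B,\mathsf B'}$ for all $\omega\in\Omega(\omega^{\mathtt E})$, via the case analysis in \eqref{eq-control-C-*-Diff}--\eqref{eq-control-C-Diff}. No spatial mixing and no auxiliary-variable trick enter this proposition; those tools are reserved for Propositions~\ref{prop:inside-M-event-different-theta} and~\ref{prop:0-field-case-connecting-and-S2-red} respectively. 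Your invocation of them here is a red herring born of underestimating the strength of the sign alignment built into $\tau_{\omega^{\mathtt E}}$.
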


\begin{prop}\label{prop:inside-M-event-different-theta}
     For any  $\delta>0$, there exists a constant $\mathsf q_0 = \mathsf q_0(\delta,T)$ large enough  such that  for any $\mathsf q \geq \mathsf q_0$,  for every $\mathsf{(B,B')}\in \mathfrak B$ and every $\omega^{\mathtt E}, \tilde \omega^{\mathtt E} \in \Omega^{\mathtt E}_{\mathsf{B,B'}}$
    \begin{align}
    \fk+(\Omega_{\mathrm{Aux}}(\omega^{\mathtt E}) \mid \omega|_{\mathtt E} = \omega^\mathtt E)\leq e^{|\mathsf B \cup \mathsf B'|}\fk+(\Omega_{\mathrm{Aux}}(\tilde \omega^{\mathtt E})\mid \omega|_{\mathtt E} = \tilde\omega^\mathtt E )\,.\label{eq:inside-M-event-different-theta}
    \end{align}
\end{prop}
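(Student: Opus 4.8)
Note first that this is essentially a \emph{deterministic} statement about the zero‑field FK–Ising measure $\fk+$: there is no disorder, and the parameter $\delta$ plays no real role (one may take $\mathsf q_0 = \mathsf q_0(T)$); large $\mathsf q$ is needed only to make the quantitative spatial mixing strong enough. The plan is to reduce \eqref{eq:inside-M-event-different-theta} to a comparison of two conditionings of the same measure and then apply ratio weak mixing for the supercritical FK–Ising model. For the reduction: with $\mathrm{Aux}$ fixed, the event $\Omega_{\mathrm{Aux}}(\omega^{\mathtt E})$ is the intersection of $\{\omega|_{\mathtt E}=\omega^{\mathtt E}\}$ with the \emph{fixed} event
$$A := \{\mathsf Q_{\mathsf v}\text{ is bad for every }\mathsf v\in \mathsf B\cup\mathsf B'\text{ with }\mathrm{Aux}_{\mathsf v}=1\},$$
which does not depend on $\omega^{\mathtt E}$ and is measurable with respect to $\omega$ restricted to $W := \bigcup\{\mathsf Q_{\mathsf v}: \mathsf v\in\mathsf B\cup\mathsf B',\ \mathrm{Aux}_{\mathsf v}=1\}\subseteq \mathsf Q_{\mathsf B\cup\mathsf B'}$ (badness of a box is measurable in the edges inside that box, and by $\mathsf k=4>1$ those edges all lie in $\mathtt E^c$). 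Hence \eqref{eq:inside-M-event-different-theta} is exactly $\fk+(A\mid\omega|_{\mathtt E}=\omega^{\mathtt E})\le e^{|\mathsf B\cup\mathsf B'|}\,\fk+(A\mid\omega|_{\mathtt E}=\tilde\omega^{\mathtt E})$. Since $\mathtt E$ consists of edges inside $\mathsf Q_{\Lambda_{\mathsf N}\setminus\mathrm{Ball}(\mathsf B\cup\mathsf B',\mathsf k)}$ while $W\subseteq \mathsf Q_{\mathsf B\cup\mathsf B'}$, every vertex incident to an edge of $\mathtt E$ is at Euclidean distance at least $(\mathsf k-1)\mathsf q = 3\mathsf q$ from $W$; this is the only place the buffer $\mathsf S_1$ and the choice $\mathsf k=4$ are used.

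I would then invoke ratio weak mixing for the FK–Ising model at $p>p_c$ (\cite[Corollary 1.4]{DCGR20}, see also \cite{Alexander98}): there is $c=c(T)>0$ such that for a region $\Delta$, an event $A'$ measurable in the edges inside $\Delta$, and two boundary states (wirings) agreeing on all vertices within distance $r$ of $\Delta$, the log‑ratio of the probabilities of $A'$ under the two measures is controlled by $\sum_{x\in\pare\Delta}\sum_{y:\ \text{wirings differ}} C e^{-c|x-y|}$. Applying this with $\Delta=W$, $A'=A$, and the two conditionings $\{\omega|_{\mathtt E}=\omega^{\mathtt E}\}$, $\{\omega|_{\mathtt E}=\tilde\omega^{\mathtt E}\}$ (whose induced wirings differ only at vertices at distance $\ge 3\mathsf q$ from $W$), and using that $W$ is a union of at most $|\mathsf B\cup\mathsf B'|$ boxes of side $2\mathsf q$ so that $|\pare W|\le C'\mathsf q^2|\mathsf B\cup\mathsf B'|$ and the double sum is at most $C''\mathsf q^2|\mathsf B\cup\mathsf B'|\,e^{-c\mathsf q}$, one obtains
$$\frac{\fk+(A\mid\omega|_{\mathtt E}=\omega^{\mathtt E})}{\fk+(A\mid\omega|_{\mathtt E}=\tilde\omega^{\mathtt E})}\ \le\ \exp\!\big(C''\mathsf q^2|\mathsf B\cup\mathsf B'|\,e^{-c\mathsf q}\big),$$
which is at most $e^{|\mathsf B\cup\mathsf B'|}$ as soon as $\mathsf q_0=\mathsf q_0(T)$ is large enough that $C''\mathsf q^2 e^{-c\mathsf q}\le 1$ for $\mathsf q\ge\mathsf q_0$.

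The main obstacle I anticipate is not conceptual but concerns extracting the mixing estimate in \emph{exactly} this multiplicative, summed‑over‑the‑boundary form — as opposed to the weaker $1\pm\varepsilon$ form, which becomes vacuous once $|W|$ is large — and checking that it applies to (a) the coarse‑grained, non‑convex domain $(\Lambda_N,\mathtt E^c)$, (b) the general, non‑monotone boundary wirings induced by $\omega^{\mathtt E}$ and $\tilde\omega^{\mathtt E}$, and (c) the non‑monotone event $A$. If only the $1\pm\varepsilon$ form is directly available, I would recover what is needed by a standard peeling of $W$ into its $\le |\mathsf B\cup\mathsf B'|$ constituent boxes, applying the mixing estimate to one box $\mathsf Q_{\mathsf v}$ at a time while carrying the already‑revealed neighbouring boxes along as part of the conditioning, so that each step costs a factor $1+O(\mathsf q^2 e^{-c\mathsf q})$ and the product over at most $|\mathsf B\cup\mathsf B'|$ steps is at most $e^{|\mathsf B\cup\mathsf B'|}$ for $\mathsf q$ large. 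The good‑box hypothesis $\omega^{\mathtt E},\tilde\omega^{\mathtt E}\in\Omega^{\mathtt E}_{\mathsf B,\mathsf B'}$ is inherited from the surrounding construction but I do not expect it to be essential for this particular bound.
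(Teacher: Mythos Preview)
Your reduction is correct and matches the paper's first step: both recast \eqref{eq:inside-M-event-different-theta} as a ratio-mixing statement for a fixed event supported in $\mathsf Q_{\mathsf B\cup\mathsf B'}$ under two conditionings on $\mathtt E$ that differ only at lattice distance $\geq 3\mathsf q$ from the event region. Your diagnosis of the obstacle is also accurate: \cite[Corollary~1.4]{DCGR20} is stated for events in $\Lambda_n$ with boundary on $\partial\Lambda_{2n}$, i.e.\ with buffer comparable to the diameter of the event region, whereas here the diameter of $\mathsf Q_{\mathsf B\cup\mathsf B'}$ can be arbitrarily large relative to the fixed buffer $3\mathsf q$. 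So the direct citation does not close the argument, and your peeling fallback is the substantive proposal.

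The paper, however, takes a genuinely different route than box-by-box peeling. In Section~\ref{sec:spatial-mixing} it invokes the random partition scheme of \cite{CKR05} (applied to $(\mathsf B\cup\mathsf B',d_\infty)$ with scale $R=\mathsf q^4$) to produce a deterministic partition $\{\mathsf P_j\}$ of bounded diameter together with a small ``boundary'' set $\mathsf P_*^{\mathrm b}$ of size $O(L\ln\mathsf q/\mathsf q^4)$. The fattened pieces $W_j=\mathsf Q_{\mathrm{Ball}(\mathsf P_j^{\mathrm o},4)}$ are then pairwise disjoint, so the chain-rule decomposition is over genuinely separated regions; on each $W_j$ the paper reruns the DCGR20 exploration coupling to get weak mixing and then upgrades to ratio mixing via \cite[Theorem~3.3]{Alexander98}. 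The leftover $\mathsf P_*^{\mathrm b}$ is handled crudely by finite energy, at cost $e^{c\mathsf q^3|\mathsf P_*^{\mathrm b}|}=e^{O(L\ln\mathsf q/\mathsf q)}$, which is why $R=\mathsf q^4$ (not $R=O(1)$) is needed.

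Your peeling avoids the partition machinery but pays elsewhere: neighboring boxes $\mathsf Q_{\mathsf v}$ in $\mathsf B\cup\mathsf B'$ overlap, so after revealing some of them the conditioning intrudes into the current box, and you are no longer in any off-the-shelf geometry for \cite{DCGR20} or \cite{Alexander98}. To make the per-step ratio bound rigorous you would still have to rerun the DCGR20 coupling plus the Alexander upgrade in this messier setup---essentially the same analytic work the paper does, but without the disjointness that the CKR partition buys. Neither approach is wrong; the paper's is cleaner to execute and sidesteps the overlap bookkeeping at the price of importing an extra combinatorial tool, while yours is lighter on machinery but leaves more geometric verification to the reader. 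Your remark that the good-box hypothesis on $\omega^{\mathtt E},\tilde\omega^{\mathtt E}$ is not used in this particular bound is also consistent with the paper's proof.
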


The underlying intuition of Proposition~\ref{prop:inside-M-event-h-and-no-h} is that the influence from disorder only comes from disorder in a thin shell around $\mathsf B \cup \mathsf B'$; the very construction of $(\mathsf B, \mathsf B')$ (so that they are surrounded by good vertices) and the choice of our Peierls mapping $\tau_{\omega}$ are both for the purpose of making this intuition into a rigorous proof.  Proposition~\ref{prop:inside-M-event-different-theta}  can be seen as a consequence of the spatial mixing property for the FK-Ising model. However, this is not obvious since this inequality is in the flavor of ratio spatial mixing where we are interested in configurations in a potentially very large space while boundary conditions are posed in places that are just of large constant distance away. Nevertheless, the statement is plausible since we are willing to lose an exponential factor. 

We need yet another estimate. To this end, let
 \begin{align*}
    \Pi_{\mathsf{B,B'}}^1&= \{\mathsf{S}_1\subset\mathtt{Red}\}\cap \{\pari\mathrm{Ball}(\mathsf{B\cup B'},2\mathsf k) \xleftrightarrow{\text{red}}\pari\Lambda_{\mathsf N}\}\,,\\
   \Pi_{\mathsf{B,B'}}^2    &=\{\mathsf B \cup \mathsf B'\subset\mathtt{Blue}\}\cap\{\mathsf{S}_2\subset\mathtt{Red}\}\,.
 \end{align*}    
\begin{prop}\label{prop:0-field-case-connecting-and-S2-red}
    There exists  $\mathsf q_0=\mathsf q_0(T,\mathsf k)$ such that for all $\mathsf q>\mathsf q_0$ and $\mathsf{(B,B')}\in\mathfrak B$
    \begin{equation}
        \vfk+(\Pi_{\mathsf{B,B'}}^1\mid \Pi_{\mathsf{B,B'}}^2)\geq e^{ - |\mathsf B \cup \mathsf B'|}.\label{eq:0-field-case-connecting-and-S2-red}
    \end{equation}
\end{prop}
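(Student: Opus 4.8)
The plan is to show that, conditionally on $\Pi_{\mathsf{B,B'}}^2$, the event $\Pi_{\mathsf{B,B'}}^1$ is \emph{typical} in a quantitative sense, at a cost that is at most exponential in $|\mathsf B\cup\mathsf B'|$. The first step is to untangle the conditioning. The event $\Pi_{\mathsf{B,B'}}^2 = \{\mathsf B\cup\mathsf B'\subset\mathtt{Blue}\}\cap\{\mathsf S_2\subset\mathtt{Red}\}$ involves both the FK-configuration $\omega$ (through goodness/badness of boxes) and the auxiliary variables $\mathrm{Aux}$. Because $\mathsf S_1,\mathsf B,\mathsf B'$ sit \emph{inside} $\mathrm{Ball}(\mathsf B\cup\mathsf B',2\mathsf k)$ while $\Pi_{\mathsf{B,B'}}^1$ asks for redness of $\mathsf S_1$ and a red connection from $\pari\mathrm{Ball}(\mathsf B\cup\mathsf B',2\mathsf k)$ out to $\pari\Lambda_{\mathsf N}$, I would first use the domain Markov property together with \eqref{eq-def-Omega-Aux} and the decomposition around \eqref{eq:localize-disorder-decomposition-configuration-version} to reduce the statement to a bound on $\fk+$-probabilities (with the $\mathrm{Aux}$-part handled by independence), separating the ``interior'' event $\{\mathsf B\cup\mathsf B'\subset\mathtt{Blue}\}$ — which is exactly what costs probability — from the ``exterior'' events $\{\mathsf S_1\subset\mathtt{Red}\}$, $\{\mathsf S_2\subset\mathtt{Red}\}$, and $\{\pari\mathrm{Ball}(\mathsf B\cup\mathsf B',2\mathsf k)\xleftrightarrow{\mathrm{red}}\pari\Lambda_{\mathsf N}\}$.

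The second step is to lower-bound the conditional probability of each of the three ``red''/connection events. For $\{\mathsf S_1\subset\mathtt{Red}\}$ and $\{\mathsf S_2\subset\mathtt{Red}\}$: by \eqref{goodboxprobability} and \eqref{eq-domination} each vertex is good with probability $\ge 1 - e^{-\mathsf c_{\mathrm g}\mathsf q}$ uniformly in the boundary condition, and each $\mathrm{Aux}_{\mathsf v} = 1$ with probability $\mathsf p_{\mathrm{aux}} = 1 - e^{-\mathsf c_{\mathrm g}\mathsf q/250}$; applying the FKG/domination bound \eqref{eq-domination} and the volume bounds \eqref{eq:volume-of-S} (so $|\mathsf S_1|,|\mathsf S_2|\le 80\mathsf k^3|\mathsf B\cup\mathsf B'|$), the probability that \emph{all} of $\mathsf S_1\cup\mathsf S_2$ is red is at least $(\mathsf p_{\mathsf q}\mathsf p_{\mathrm{aux}})^{80\mathsf k^3|\mathsf B\cup\mathsf B'|} \ge e^{-|\mathsf B\cup\mathsf B'|}$ once $\mathsf q$ is large enough that $80\mathsf k^3|\log(\mathsf p_{\mathsf q}\mathsf p_{\mathrm{aux}})| < 1$. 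For the long red connection $\pari\mathrm{Ball}(\mathsf B\cup\mathsf B',2\mathsf k)\xleftrightarrow{\mathrm{red}}\pari\Lambda_{\mathsf N}$: since red percolation stochastically dominates supercritical Bernoulli percolation with parameter $\mathsf p_{\mathsf q}\to 1$ (again \eqref{eq-domination}), for $\mathsf q$ large the complement of this connection event on the set $\Lambda_{\mathsf N}\setminus\mathrm{Ball}(\mathsf B\cup\mathsf B',2\mathsf k)$ has probability bounded by a constant strictly below $1$ — in fact it can be made, say, $\le \tfrac12$ — uniformly in $N$ and in $(\mathsf B,\mathsf B')$, because a blocking surface separating the ball from $\pari\Lambda_{\mathsf N}$ in the dominated percolation would have to be large and its probability is summably small (a standard Peierls-type count on the dual). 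One must be a little careful that this connection event is decreasing in the \emph{blue} set, hence increasing in the red/good configuration, so FKG applies in the favorable direction and the conditioning on the interior being blue only \emph{helps} (or at worst can be handled by the domain Markov property since the relevant region is disjoint from $\mathsf B\cup\mathsf B'$).

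The third step is to assemble: conditionally on $\Pi_{\mathsf{B,B'}}^2$, using positive association (the three events above are all increasing in the red configuration, and the conditioning event $\{\mathsf S_2\subset\mathtt{Red}\}$ is also increasing while $\{\mathsf B\cup\mathsf B'\subset\mathtt{Blue}\}$ is on a disjoint vertex set so is handled by domain Markov) one multiplies the three lower bounds to get $\vfk+(\Pi_{\mathsf{B,B'}}^1\mid\Pi_{\mathsf{B,B'}}^2) \ge \tfrac12\cdot e^{-|\mathsf B\cup\mathsf B'|}\cdot(\text{const})\ge e^{-|\mathsf B\cup\mathsf B'|}$, the last step absorbing the constants into the exponential since $|\mathsf B\cup\mathsf B'|\ge 1$ and $\mathsf q$ is large. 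I expect the \textbf{main obstacle} to be the bookkeeping around the conditioning on $\{\mathsf B\cup\mathsf B'\subset\mathtt{Blue}\}$: this event is a mixture of FK-badness of certain boxes and $\mathrm{Aux}=0$ at certain vertices (see \eqref{eq-def-Omega-Aux}), and one must argue carefully — via the domain Markov property for the FK-Ising model applied to the edge set $\mathtt E$ and the fact that goodness of a box is determined by edges within a bounded neighborhood — that conditioning on badness \emph{inside} $\mathrm{Ball}(\mathsf B\cup\mathsf B',\mathsf k)$ does not spoil the near-certainty of the redness/connection events that live strictly \emph{outside} $\mathrm{Ball}(\mathsf B\cup\mathsf B',\mathsf k)$ (it should not, precisely because $\mathsf S_2$ and the exterior are a positive distance away from $\mathsf B\cup\mathsf B'$ and goodness has a local shielding property as noted after \eqref{goodboxprobability}). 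Making this ``shielding'' rigorous — essentially that one may condition on a worst-case FK-boundary on $\pari\mathsf Q_{\mathsf B\cup\mathsf B'\cup\mathsf S_1}$ and still run the estimates of step two uniformly — is where the real care is needed, and it parallels the shielding arguments used elsewhere in the paper (e.g.\ around \eqref{eq-small-comparison-good-prob-close-to-1}).
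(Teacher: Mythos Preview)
Your proposal correctly identifies the central obstacle --- decoupling the conditioning on $\{\mathsf B\cup\mathsf B'\subset\mathtt{Blue}\}$ from the red events --- but the resolution you sketch does not work, and the missing idea is precisely the one the auxiliary variables were introduced to supply.

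The concrete failure is in your treatment of $\{\mathsf S_1\subset\mathtt{Red}\}$. You write that the red events ``live strictly outside $\mathrm{Ball}(\mathsf B\cup\mathsf B',\mathsf k)$'' and can therefore be separated from the blue conditioning by domain Markov or FKG. But $\mathsf S_1 = \mathrm{Ball}(\mathsf B\cup\mathsf B',\mathsf k)\setminus(\mathsf B\cup\mathsf B')$, so goodness of $\mathsf v\in\mathsf S_1$ adjacent to some $\mathsf u\in\mathsf B\cup\mathsf B'$ depends on FK-edges in $\mathsf Q_{\mathsf v}$, which overlaps $\mathsf Q_{\mathsf u}$; there is no disjointness at the edge level and domain Markov does not separate them. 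Nor does FKG rescue you: goodness of a box (existence of a spanning cluster \emph{and} absence of a second large component) is not monotone in $\omega$, so conditioning on badness of neighboring boxes has no sign. Your ``worst-case boundary condition on $\pari\mathsf Q_{\mathsf B\cup\mathsf B'\cup\mathsf S_1}$'' formulation does not capture the conditioning either, since badness is a condition on the full configuration inside those boxes, not a boundary specification.

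The paper's route is different and is the whole reason $\mathrm{Aux}$ exists. One first proves (Lemma~\ref{lem:blue-comes-from-closed}) that, conditionally on $\{\mathsf B\cup\mathsf B'\subset\mathtt{Blue}\}$, the sub-event $\{\mathsf B\cup\mathsf B'\subset\mathtt{Closed}\}$ (i.e.\ all blueness comes from $\mathrm{Aux}=0$, not from FK-badness) has probability at least $(1+e^{-\mathsf c_{\mathrm g}\mathsf q/250})^{-|\mathsf B\cup\mathsf B'|}$. This holds because $\mathsf P_{\mathrm{aux}}(\mathrm{Aux}_{\mathsf v}=0)=e^{-\mathsf c_{\mathrm g}\mathsf q/250}$ is exponentially larger than $\fk+(\mathsf v\text{ bad})\le e^{-\mathsf c_{\mathrm g}\mathsf q}$, so most of the mass of ``blue'' sits on ``closed''. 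Once one restricts to $\{\mathsf B\cup\mathsf B'\subset\mathtt{Closed}\}$, this event is measurable with respect to $\mathrm{Aux}$ alone and is therefore \emph{independent} of $\omega$; the red events $R_1\cap R_2\cap R_{\mathrm c}$ then get their unconditional lower bound from \eqref{eq-domination} with no further shielding argument needed. The product of these two factors gives $e^{-|\mathsf B\cup\mathsf B'|}$ for $\mathsf q$ large.
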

Roughly speaking, the challenge in proving Proposition~\ref{prop:0-field-case-connecting-and-S2-red} is on the conditioning of $\Pi_{\mathsf{B,B'}}^2$ and especially on conditioning for appearance of blue vertices (since blue is rare). This challenge will be addressed by taking advantage of our auxiliary variables (see Section~\ref{sec:blue-red-percolation}). 

In addition, by \eqref{eq-domination} we have the following:
there exists  $\mathsf q_0=\mathsf q_0(T,\mathsf k)$ such that for any $\mathsf q>\mathsf q_0$ and $\mathsf{(B,B')}\in\mathfrak B$
\begin{equation}\label{eq:0-field-all-good-in-S1}
    \fk+(\mathsf{S}_2 \subset \mathsf V_{\mathrm g})\geq e^{-|\mathsf B \cup \mathsf B'|}\,,
\end{equation}
where we recall that $\mathsf V_{\mathrm g}$ is the collection of good vertices in $\Lambda_{\mathsf N}$.

\begin{proof}[Proof of Proposition~\ref{prop-large-surface-estimation}]
We fix an arbitrary $\omega_0^{\mathtt E} \in \Omega^{\mathtt E}_{\mathsf {B,B'}}$ in this proof. We first explain that $\Pi_{\mathsf{B,B'}}=\Pi_{\mathsf{B,B'}}^1\cap\Pi_{\mathsf{B,B'}}^2$. It suffices to check that $\mathsf B$ is the `outmost' blue boundary for  $\omega\in \Pi_{\mathsf{B,B'}}^1\cap\Pi_{\mathsf{B,B'}}^2$, i.e., every point in $\mathsf B$ is connected to $\pari\Lambda_{\mathsf N}$ by a red path. Recalling \eqref{eq-red-surounding}, we see that every path joining $\mathsf B$ and  $\pari \mathrm{Ball}(\mathsf{B\cup B'}, 2\mathsf k)$ intersects with $\mathtt R$. Combined with \eqref{eq-R-to-Lambda-N} and the fact that $\mathtt R$ is connected, this completes the verification of the claim. So we have
 $$
\vfkh+(\Pi_{\mathsf{B,B'}})\leq \vfkh+(\Pi_{\mathsf{B,B'}}^2) \mbox{ and }
\vfk+(\Pi_{\mathsf{B,B'}})=\vfk+(\Pi_{\mathsf{B,B'}}^2)\times \vfk+(\Pi_{\mathsf{B,B'}}^1\mid\Pi_{\mathsf{B,B'}}^2)\,.
$$
Let  $\mathsf q>\mathsf{q}_0$ be chosen large enough
 so that they satisfy requirements in Propositions~\ref{prop:inside-M-event-h-and-no-h}, \ref{prop:inside-M-event-different-theta}, \ref{prop:0-field-case-connecting-and-S2-red} and \eqref{eq:0-field-all-good-in-S1}. In addition, let $\eps_0$ be such that $81\mathsf k^3\mathsf q^3\sqrt{\eps_0}/T<1/2,c\sqrt{\eps_0}\mathsf q^3<1/2 ,e^{-c_{\mathrm z}/\eps_0} \leq \delta/2,$ (where $c_{\mathrm z}$ is as in Lemma~\ref{lem:comparez} and $c$ is as in Lemma \ref{lem:flip-external-field-estimation}) and that
 $\eps_0 \leq  \eps_1(\mathsf q,\delta/2,T)$
 in Proposition~\ref{prop:inside-M-event-h-and-no-h}.
Choosing $\eps\leq \eps_0$, by Lemmas~\ref{lem:comparez}, \ref{lem:flip-external-field-estimation} and Propositions~\ref{prop:inside-M-event-h-and-no-h}, \ref{prop:inside-M-event-different-theta}  we have that \eqref{eq:flip-external-field-estimation},  \eqref{eq:inside-M-event-h-and-no-h} simultaneously hold  with $\mbb P$-probability at least $1-\delta$. On this event, for all $(\mathsf B, \mathsf B')\in \mathfrak B$ with $|\mathsf B \cup \mathsf B'|  = L \geq 1$, we have:
 \begin{align}
   &\qquad \vfkh +(\Pi_{\mathsf{B,B'}})\leq \vfkh +(\Pi_{\mathsf{B,B'}}^2)\nonumber\\
    &\stackrel{\eqref{eq:localize-disorder-decomposition-configuration-version}}{=}\sum_{\omega^\mathtt E\in\Omega^\mathtt E_{\mathsf B, \mathsf B'}}\sum_{\mathrm{Aux}\in \mathcal{A}_{\mathsf B, \mathsf B'}}\mathsf P_{\mathrm{aux}}(\mathrm{Aux})\times\fkh+(\Omega_{\mathrm {Aux}}(\omega^\mathtt E)) \nonumber\\
    &\stackrel{\eqref{eq:flip-external-field-estimation}, \eqref{eq:compare-partition-z}}{\leq} e^{L}\sum_{\omega^\mathtt E\in\Omega^\mathtt E_{\mathsf B, \mathsf B'}}\sum_{\mathrm{Aux}\in \mathcal{A}_{\mathsf B, \mathsf B'}}\mathsf P_{\mathrm{aux}}(\mathrm{Aux})\times\fkf+{\tau_{\omega}(h)}(\Omega_{\mathrm {Aux}}(\omega^\mathtt E)) \nonumber\\
    &\stackrel{\eqref{eq:localize-disorder-decomposition-conditional-probability-version}}{=}e^{ L}\sum_{\omega^\mathtt E\in\Omega^\mathtt E_{\mathsf B, \mathsf B'}}\big(\fkf+{\tau_{\omega}(h)}(\Omega(\omega^\mathtt E))\times\hspace{-0.5em}\sum_{\mathrm{Aux}\in \mathcal{A}_{\mathsf B, \mathsf B'} }\fkf+{\tau_{\omega}(h)}(\Omega_{\mathrm{Aux}}(\omega^\mathtt E)\mid \omega|_{\mathtt E} = \omega^\mathtt E)\times  \mathsf P_{\mathrm{aux}}(\mathrm{Aux} )\big) \nonumber\\
    &\stackrel{\eqref{eq:inside-M-event-h-and-no-h}}{\leq} e^{ 101 \mathsf k^3 L}\sum_{\omega^\mathtt E\in\Omega^\mathtt E_{\mathsf B, \mathsf B'}}\big(\fkf+{\tau_{\omega}(h)}(\Omega(\omega^\mathtt E))\times\hspace{-0.5em}\sum_{\mathrm{Aux}\in \mathcal{A}_{\mathsf B, \mathsf B'} }\fk+(\Omega_{\mathrm{Aux}}(\omega^\mathtt E)\mid \omega|_{\mathtt E} = \omega^\mathtt E)\times  \mathsf P_{\mathrm{aux}}(\mathrm{Aux} )\big) \nonumber\\
    &\stackrel{\eqref{eq:inside-M-event-different-theta}}{\leq} e^{102 \mathsf k^3L}
    \sum_{\omega^\mathtt E\in\Omega^\mathtt E_{\mathsf B, \mathsf B'}}\big(\fkf+{\tau_{\omega}(h)}(\Omega(\omega^\mathtt E))\times\hspace{-0.5em}\sum_{\mathrm{Aux}\in \mathcal{A}_{\mathsf B, \mathsf B'} }\fk+(\Omega_{\mathrm{Aux}}(\omega_0^\mathtt E)\mid \omega|_{\mathtt E} = \omega_0^\mathtt E)\times  \mathsf P_{\mathrm{aux}}(\mathrm{Aux} )\big) \nonumber\\
    &= e^{102 \mathsf k^3L}\Big[\sum_{\mathrm{Aux}\in \mathcal{A}_{\mathsf B, \mathsf B'} }\fk+(\Omega_{\mathrm{Aux}}(\omega_0^\mathtt E)\mid \omega|_{\mathtt E} = \omega_0^\mathtt E)\times \mathsf P_{\mathrm{aux}}(\mathrm{Aux} )\Big]\times \hspace{-0.5em} \sum_{\omega^\mathtt E\in\Omega^\mathtt E_{\mathsf B, \mathsf B'}}\fkf+{\tau_{\omega}(h)}(\Omega(\omega^\mathtt E)). \label{eq-varphi-external-field-upper-bound}
    \end{align}
Recall that $\Omega_{\mathsf{B,B'}}^\mathtt E$ is the set of all the possible configurations on $\mathtt E$ such that $\mathsf{S}_2 \subset \mathsf V_{\mathrm g}$. Thus, 
$$\sum_{\omega^\mathtt E\in\Omega^\mathtt E_{\mathsf{B,B'}}}\fkh+(\Omega(\omega^\mathtt E))=\fkh+\big(
\bigcup_{\omega^\mathtt E\in\Omega^\mathtt E_{\mathsf{B,B'}}}\Omega(\omega^\mathtt E)\big)=\fkh+(\mathsf{S}_2 \subset \mathsf V_{\mathrm g}).$$
Note that the above does not quite hold for $\fkf+{\tau_{\omega^\mathtt E}(h)}$ since the Peierls mapping $\tau$ depends on $\omega^{\mathtt E}$, and thus it requires a slightly more careful analysis. Since for each $j = 1, \ldots, n$ the external field on (the whole) $U_j$ is flipped or not (and these are all the possible flippings involved), we see that the cardinality for $\mcc H = \{\tau_\omega: \omega|_{\mathtt E} \in \Omega^{\mathtt E}_{\mathsf B, \mathsf B'}\}$ is at most $2^n$. Recalling from \eqref{eq:number-of-U} $n\leq 80\mathsf k^3L$, we have
\begin{align}\label{eq-flipping-sum-upper-bound}
   \sum_{\omega^\mathtt E\in\Omega^\mathtt E_{\mathsf B, \mathsf B'}}\fkf+{\tau_{\omega^\mathtt E}(h)}(\Omega(\omega^\mathtt E))&\leq\sum_{\omega^\mathtt E\in\Omega^\mathtt E_{\mathsf B, \mathsf B'}}\sum_{\tau\in \mcc H}\fkf+{\tau(h)}(\Omega(\omega^\mathtt E))\nonumber\\
   &=\sum_{\tau \in\mcc H} \fkf+{\tau(h)} (\mathsf{S}_2 \subset \mathsf V_{\mathrm g})\leq 2^n\leq e^{80\mathsf k^3 L}.
\end{align}
We next lower-bound the measure without disorder:
\begin{align*}
    &\vfk+(\Pi_{\mathsf{B,B'}})=\vfk+(\Pi_{\mathsf{B,B'}}^2)\vfk+(\Pi_{\mathsf{B,B'}}^1\mid\Pi_{\mathsf{B,B'}}^2)\\
    \stackrel{\eqref{eq:0-field-case-connecting-and-S2-red}}{\geq}&e^{- L}\vfk+(\Pi_{\mathsf{B,B'}}^2)\\
    \stackrel{\eqref{eq:localize-disorder-decomposition-conditional-probability-version}}{=}&e^{- L}\sum_{\omega^\mathtt E\in\Omega^\mathtt E_{\mathsf B, \mathsf B'}}\big(\fk+(\Omega(\omega^\mathtt E))\times\sum_{\mathrm{Aux}\in \mathcal{A}_{\mathsf B, \mathsf B'} }\fk+(\Omega_{\mathrm{Aux}}(\omega^\mathtt E)\mid \omega|_{\mathtt E} = \omega^\mathtt E)\times  \mathsf P_{\mathrm{aux}}(\mathrm{Aux} )\\
    \stackrel{\eqref{eq:inside-M-event-different-theta}}{\geq}&e^{-2 L}\sum_{\omega^\mathtt E\in\Omega^\mathtt E_{\mathsf B, \mathsf B'}}\big(\fk+(\Omega(\omega^\mathtt E))\times\sum_{\mathrm{Aux}\in \mathcal{A}_{\mathsf B, \mathsf B'} }\fk+(\Omega_{\mathrm{Aux}}(\omega_0^\mathtt E)\mid \omega|_{\mathtt E} = \omega_0^\mathtt E)\times  \mathsf P_{\mathrm{aux}}(\mathrm{Aux} )\\
    =&e^{-2 L}\Big[\sum_{\mathrm{Aux}\in \mathcal{A}_{\mathsf B, \mathsf B'} }\fk+(\Omega_{\mathrm{Aux}}(\omega_0^\mathtt E)\mid \omega|_{\mathtt E} = \omega_0^\mathtt E) \times  \mathsf P_{\mathrm{aux}}(\mathrm{Aux} )\Big]\times \fk+(\mathsf{S}_2\subset \mathsf V_{\mathrm g})\\
    \stackrel{\eqref{eq:0-field-all-good-in-S1}}{\geq}&e^{-3 L}\sum_{\mathrm{Aux}\in \mathcal{A}_{\mathsf B, \mathsf B'} }\fk+(\Omega_{\mathrm{Aux}}(\omega_0^\mathtt E)\mid \omega|_{\mathtt E} = \omega_0^\mathtt E)\times  \mathsf P_{\mathrm{aux}}(\mathrm{Aux}).
\end{align*}
Combined with \eqref{eq-varphi-external-field-upper-bound} and \eqref{eq-flipping-sum-upper-bound}, this completes the proof of the proposition.
\end{proof}

\subsection{Method of coarse graining}\label{sec:coarse-graining-proof}

In this subsection we prove Proposition~\ref{prop:coarse-graining} and Lemma~\ref{lem:comparez}, both of which employ the method of coarse graining.

\begin{proof}[Proof of Proposition~\ref{prop:coarse-graining}]
Since $\mathsf B \cup \mathsf B'$ is $2\mathsf k$-connected and from $\mathsf B \cup \mathsf B'$ we can uniquely recover $(\mathsf B, \mathsf B')$ ($\mathsf B$ is the outmost contour that surrounds the origin in $\mathsf B \cup \mathsf B'$), we can control the enumeration of possible $(\mathsf B, \mathsf B')$ with $|\mathsf B\cup \mathsf B'| = L$ as follows: each such set is encoded by a contour (where the neighboring relation is $2\mathsf k$-neighboring) of length $2L$ (each contour corresponds to a depth-first-search process of a spanning tree of size $L$) where the starting point is in $\mathrm{Ball}(\mathsf o, 2\mathsf k L)$. It is straightforward that the number of ways for such encoding is at most $(2\mathsf k L+1)^3 (4\mathsf k+1)^{3L}$. Combined with \eqref{eq-domination}, this completes the proof. 
\end{proof}

\begin{proof}[Proof of Lemma~\ref{lem:comparez}]
In the case of $n = 1$, this was explicitly written in \cite{DZ21} with crucial input from \cite{Chalker83, FFS84}. Our current lemma has some minor technical complications and we now explain how to address them.

For each $U_1, \ldots, U_n$ and $I\subset \{1, \ldots, n\}$, let $h^I$ be obtained from $h$ by flipping the signs of disorder on $\cup_{i\in I} U_i$. The complication is that $\cup_{i=1}^n U_i$ is not connected. However,   since each $\mathsf U_i$ is connected to $\mathrm{Ball}(\mathsf{B\cup B'},2\mathsf k)$, we may add a path $P = P(U_1, \ldots, U_n)$ of length at most $80\mathsf k^3 L \mathsf q$ so that $\cup_{i\in I} U_I \cup P$ is a simply connected subset for all $I$. Let $h^{I, P}$ be obtained from $h$ by flipping the signs of disorder on $\cup_{i\in I} U_i \cup P$. Now applying \cite[Equation (18)]{DZ21} and \eqref{eq-partition-function-equal} (which relates the partition function for the FK-Ising model and to that for the Ising model), we get that 
\begin{equation}\label{eq-compare-partition-simply-connected}
\mbb P\big(\max_{(\mathsf B, \mathsf B') \in \mathfrak B_L}\frac{\mathcal Z^+_{\phi}(\epsilon h^{I, P})}{\mathcal Z^+_{\phi}(\epsilon h)} \geq  e^{\sqrt{\epsilon} \mathsf q^3 \mathsf k^3 L/T}\big) \leq e^{-2c_{\mathrm z} L/\epsilon}
\end{equation}
where $c_{\mathrm z} = c_{\mathrm z}(T) > 0$. Note that in \cite{DZ21} the bound on the ratio for partition functions is $e^{L/T}$, but it is straightforward to extend to $e^{L \sqrt{\epsilon}/T}$. Furthermore, by \cite[Lemma 3.1]{DZ21} (see also \cite[Equation (13)]{DZ21}), we see that for $\epsilon$ sufficiently small
\begin{align*}
\mbb P\big(\max_{(\mathsf B, \mathsf B') \in \mathfrak B_L}\frac{\mathcal Z^+_{\phi}(\epsilon h^{I, P})}{\mathcal Z^+_{\phi}(\epsilon h^I)} \geq e^{\sqrt{\epsilon} \mathsf k^3 \mathsf q L/T}\big) \leq \mbb P\big(\max_{P: |P| \leq  80\mathsf k^3 L \mathsf q} \sum_{v\in P} |h_v| \geq \epsilon^{-1/2} \mathsf k^3 \mathsf q L\big)  \leq e^{-2c_{\mathrm z} L/\epsilon}\,,
\end{align*}
where the second inequality follows from a union bound on $P$. Also, the similar probability estimates hold for $\{\min\limits_{(\mathsf B, \mathsf B') \in \mathfrak B_L}\frac{\mathcal Z^+_{\phi}(\epsilon h^{I, P})}{\mathcal Z^+_{\phi}(\epsilon h)} \leq e^{-\sqrt{\epsilon} \mathsf q^3 \mathsf k^3 L/T}\}$ and $\{\min\limits_{(\mathsf B, \mathsf B') \in \mathfrak B_L}\frac{\mathcal Z^+_{\phi}(\epsilon h^{I, P})}{\mathcal Z^+_{\phi}(\epsilon h^I)} \leq e^{-\sqrt{\epsilon} \mathsf k^3 \mathsf q L/T}\}$.
Combined with \eqref{eq-compare-partition-simply-connected} and the triangle inequality, this implies the inequality for the partition function via a simple union bound on $I$ (whose choice is at most $2^n$), on $n\leq L$ and on $L \geq 1$. 

For each $(\mathsf B, \mathsf B')$ with $|\mathsf B \cup \mathsf B'| = L$, we see from simple a Gaussian estimate that for sufficiently small $\epsilon$:
$$\mbb P(H_{\mathsf B, \mathsf B'} \geq \epsilon^{-1/2} \mathsf q^3 L) \leq e^{-\epsilon^{-1} L/100}\,.$$
Thus, the simultaneous bound for all $H_{\mathsf B, \mathsf B'}$ follows from a straightforward union bound once we have the upper bound on the enumeration of $(\mathsf B, \mathsf B')$ with $|\mathsf B\cup \mathsf B'| = L$ as in the proof of Proposition~\ref{prop:coarse-graining}.
\end{proof}

\subsection{Blue-red percolation}  \label{sec:blue-red-percolation}

In this subsection, we  prove Proposition~\ref{prop:0-field-case-connecting-and-S2-red}.
For convenience, in this subsection we will fix an arbitrary $(\mathsf{B,B'})\in\mathfrak B$ and denote by $B$ the event that $\mathsf B \cup \mathsf B'$ is blue, by $R_1, R_2$ the events that $\mathsf{S}_1, \mathsf{S}_2$ are red respectively, and by $R_{\mathrm c} = \{\pari \mathrm{Ball}(\mathsf{B\cup B'},2\mathsf k) \xleftrightarrow{\text{red}} \pari\Lambda_\mathsf N\}$.
Thus, we have 
$$\Pi_{\mathsf{B\cup B'}}^1= R_1 \cap R_{\mathrm c} \mbox{ and } \Pi_{\mathsf{B\cup B'}}^2=R_2\cap B\,.$$

As hinted earlier, the main challenge for the proof is to deal with the conditioning especially the conditioning of blue vertices. 
Recall that a vertex $\mathsf v$ is blue if either $\mathsf v$ is bad or $\mathrm {Aux}_{\mathsf v}= 0$,
and that the former occurs with probability at most $e^{-\mathsf c_{\mathrm g} \mathsf q }$ whereas the latter occurs with probability $e^{-\mathsf c_{\mathrm g} \mathsf q/{250} }$. Therefore, when $\mathsf q$ is large enough,  the conditioning of $\mathsf v$ being blue is roughly like the conditioning of $\mathrm{Aux}_{\mathsf v}= 0$; as a result, this has small influence on FK-percolation elsewhere.  To formalize this intuition, let 
$$\mathtt{Open}=\{\mathsf v\in\Lambda_{\mathsf N}:\mathrm{Aux}_{\mathsf v}= 1\} \mbox{ and } \mathtt{Closed}=\Lambda_{\mathsf N}\setminus\mathtt{Open}\,.$$
\begin{lem}\label{lem:blue-comes-from-closed}
We have that
    \begin{equation}\label{eq-blue-comes-from-bad}
        \vfk+(\mathsf{B\cup B'}\subset \mathtt{Closed}\mid B)\geq (1+e^{-\mathsf c_{\mathrm g}\mathsf q/250})^{-|\mathsf B\cup \mathsf B'|}.
    \end{equation}
\end{lem}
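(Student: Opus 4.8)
The plan is to remove the auxiliary field first, reducing to a pure FK‑Ising statement, and then prove the latter by expanding a product and invoking \eqref{goodboxprobability} in its conditional form.

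Write $L=|\mathsf B\cup\mathsf B'|$ and $a=e^{-\mathsf c_{\mathrm g}\mathsf q/250}$, so that $\{\mathsf v\in\mathtt{Closed}\}=\{\mathrm{Aux}_{\mathsf v}=0\}$ has probability $a$, independently over $\mathsf v$ and of $\omega$. Since a closed vertex is blue we have $\{\mathsf B\cup\mathsf B'\subset\mathtt{Closed}\}\subset B$; and conditioning on $\omega$, under which a vertex of $\mathsf B\cup\mathsf B'$ is blue exactly when it is bad, or it is good and has $\mathrm{Aux}=0$, gives
\begin{equation*}
\vfk+(\mathsf B\cup\mathsf B'\subset\mathtt{Closed})=a^{L},\qquad
\vfk+(B)=\mathbb E_{\fk+}\big[a^{L-D(\omega)}\big]=a^{L}\,\mathbb E_{\fk+}\big[a^{-D(\omega)}\big],
\end{equation*}
where $D(\omega)$ is the number of bad vertices in $\mathsf B\cup\mathsf B'$. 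Hence $\vfk+(\mathsf B\cup\mathsf B'\subset\mathtt{Closed}\mid B)=\mathbb E_{\fk+}[a^{-D(\omega)}]^{-1}$, so \eqref{eq-blue-comes-from-bad} is equivalent to the bound $\mathbb E_{\fk+}[a^{-D(\omega)}]\le(1+a)^{L}$.

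For this, use $a^{-\mathbbm{1}\{\mathsf v\text{ bad}\}}=1+(a^{-1}-1)\mathbbm{1}\{\mathsf v\text{ bad}\}$ (note $a^{-1}-1>0$) and multiply out:
\begin{equation*}
\mathbb E_{\fk+}[a^{-D(\omega)}]=\sum_{\mathsf S\subset\mathsf B\cup\mathsf B'}(a^{-1}-1)^{|\mathsf S|}\,\fk+\big(\text{every vertex of }\mathsf S\text{ is bad}\big).
\end{equation*}
It therefore suffices to show the geometric estimate $\fk+(\text{every vertex of }\mathsf S\text{ is bad})\le e^{-\mathsf c_{\mathrm g}\mathsf q\,|\mathsf S|/64}$ for every $\mathsf S\subset\Lambda_{\mathsf N}$: plugging it in, the sum is at most $\big(1+(a^{-1}-1)e^{-\mathsf c_{\mathrm g}\mathsf q/64}\big)^{L}$ by the binomial theorem, and since $1/64>2/250$ we have $a^{-1}e^{-\mathsf c_{\mathrm g}\mathsf q/64}=e^{-\mathsf c_{\mathrm g}\mathsf q(1/64-1/250)}\le e^{-\mathsf c_{\mathrm g}\mathsf q/250}=a$, hence $(a^{-1}-1)e^{-\mathsf c_{\mathrm g}\mathsf q/64}\le a$ and the claim follows.

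The geometric estimate is the only real point, and the obstacle is that $\{\mathsf v\text{ is good}\}$ is an increasing event, so these events are \emph{positively} correlated under FKG and one cannot simply multiply marginals. Instead I would use the conditional form of \eqref{goodboxprobability}: by the domain Markov property of the FK‑Ising model, conditioning on $\omega$ outside the concentric box $\widehat{\mathsf Q}_{\mathsf v}:=\mathsf q\mathsf v+[-2\mathsf q,2\mathsf q]^3$ of side $4\mathsf q$, the configuration inside is an FK‑Ising measure with some boundary condition, so $\fk+(\mathsf v\text{ is bad}\mid\mathcal F)\le e^{-\mathsf c_{\mathrm g}\mathsf q}$ for every $\sigma$‑field $\mathcal F$ contained in the one generated by $\omega$ restricted to edges outside $\widehat{\mathsf Q}_{\mathsf v}$ (this is precisely the remark after \eqref{goodboxprobability}, valid also near $\pari\Lambda_{\mathsf N}$, where $\widehat{\mathsf Q}_{\mathsf v}$ is truncated by the wired boundary). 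Colour each coarse vertex by the residues of its three coordinates modulo $4$, giving $64$ colours; any two distinct vertices of the same colour are at $\ell_\infty$‑distance $\ge4$, so for such $\mathsf u\neq\mathsf w$ the box $\mathsf Q_{\mathsf u}=\mathsf q\mathsf u+[-\mathsf q,\mathsf q]^3$ (on which $\{\mathsf u\text{ is bad}\}$ depends) is disjoint from $\widehat{\mathsf Q}_{\mathsf w}$. Given $\mathsf S$, pick the colour meeting $\mathsf S$ in a subset $\mathsf S'$ with $|\mathsf S'|\ge|\mathsf S|/64$, enumerate $\mathsf S'=\{\mathsf w_1,\dots,\mathsf w_t\}$, and reveal them one at a time: at step $r$ the events $\{\mathsf w_1\text{ bad}\},\dots,\{\mathsf w_{r-1}\text{ bad}\}$ are determined by $\omega$ outside $\widehat{\mathsf Q}_{\mathsf w_r}$, so $\fk+(\mathsf w_r\text{ bad}\mid\{\mathsf w_1\text{ bad}\},\dots,\{\mathsf w_{r-1}\text{ bad}\})\le e^{-\mathsf c_{\mathrm g}\mathsf q}$; multiplying these gives $\fk+(\text{every vertex of }\mathsf S'\text{ is bad})\le e^{-\mathsf c_{\mathrm g}\mathsf q\,t}$, and since $\{\text{every vertex of }\mathsf S\text{ bad}\}\subset\{\text{every vertex of }\mathsf S'\text{ bad}\}$ and $t\ge|\mathsf S|/64$ this yields the claim. (Any constant $\ge4^{3}$ works in place of $64$; the factor $250$ in $\mathsf p_{\mathrm{aux}}$ leaves ample room, and no largeness of $\mathsf q$ is needed, matching the statement.)
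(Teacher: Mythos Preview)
Your proof is correct and follows essentially the same route as the paper: both compute $\vfk+(\mathsf B\cup\mathsf B'\subset\mathtt{Closed})=a^L$, upper-bound $\vfk+(B)$ by summing over the subset of bad vertices and invoking the conditional form of \eqref{goodboxprobability} on a well-separated subset, and conclude by a Bayesian computation. The only cosmetic differences are that the paper writes the decomposition directly over partitions $(\mathsf M_1,\mathsf M_2)$ rather than via the product expansion $a^{-D}=\prod(1+(a^{-1}-1)\mathbbm 1_{\mathrm{bad}})$, and uses a greedy $3$-separated subset (constant $125$) in place of your mod-$4$ colouring (constant $64$); both constants comfortably beat the $250$ in $\mathsf p_{\mathrm{aux}}$.
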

\begin{proof}
    On the one hand, by the definition of $\mathsf{Aux}$
    \begin{equation}\label{eq-closed-probability}
        \vfk+(\mathsf{B\cup B'}\subset \mathtt{Closed})=e^{-\mathsf c_{\mathrm g}\mathsf{q|B\cup B'|}/250}.
    \end{equation}       
    On the other hand, we could decompose the event $B=\{\mathsf B\cup \mathsf B' \mbox{ is blue}\}$ according to the realization of $\mathsf {Aux}$'s on $\mathsf B\cup \mathsf B'$ as follows:
    \begin{equation}
        B=\bigcup_{\mathsf M_1\cup \mathsf M_2 = \mathsf{B\cup B'}}\{\mathsf M_1\subset\mathtt{Closed}\}\cap\{\mathsf M_2\subset\mathtt{Open}\}\cap\{\mathsf v\text{ is bad for all } \mathsf{v\in M_2}\}\,,
        \label{eq:blue-decompose-into-bad-and-closed}
    \end{equation}
    where the disjoint union is over all the possible $\mathsf{M_1\cup M_2=B\cup B'}$ with $\mathsf{M_1\cap M_2}=\emptyset$.

For any $\mathsf{M_1\cup M_2=B\cup B'}$, we may choose $\mathsf M'_2 \subset \mathsf M_2$ such that (1) vertices in $\mathsf M'_2$ have pairwise $\ell_{\infty}$-distance at least $3$ and (2) $|\mathsf M'_2| \geq |\mathsf M_2|/125$ (note that any maximal $\mathsf M'_2$ satisfying (1) also satisfies (2)). Combined with \eqref{goodboxprobability}, it yields that
\begin{equation}
\fk+(\mathsf{v}\text{ is bad for all } \mathsf v\in \mathsf M_2)\leq \fk+(\mathsf{v}\text{ is bad for all } \mathsf v\in \mathsf M'_2)\leq \exp(-\mathsf c_{\mathrm g}\mathsf q |\mathsf M_2|/125).
\label{eq:many-bad-upper-bound}
\end{equation}
Writing $L = |\mathsf{B\cup B'}|$, we see that the number of choices for $\mathsf M_2$ with $|\mathsf M_2| = m$ is at most $\binom{L}{ m}$. Combining this with  \eqref{eq:blue-decompose-into-bad-and-closed} and \eqref{eq:many-bad-upper-bound}, we have
\begin{align}
\vfk+(B)\leq\sum_{\mathsf m=0}^L\binom{L}{m}e^{-(L-m)\mathsf c_{\mathrm g}\mathsf q/250}e^{- m \mathsf c_{\mathrm g}\mathsf q/125}= (e^{-\mathsf c_{\mathrm g}\mathsf q/250}+e^{-\mathsf c_{\mathrm g}\mathsf q/125})^L\,.
\end{align}
Combined with \eqref{eq-closed-probability}, this yields \eqref{eq-blue-comes-from-bad} by a simple Bayesian computation. 
    \end{proof}

We are now ready to prove Proposition~\ref{prop:0-field-case-connecting-and-S2-red}.

\begin{proof}[Proof of Proposition~\ref{prop:0-field-case-connecting-and-S2-red}]
Since $\{\mathsf{B\cup B'}\subset\mathtt {Closed}\} \subset B$, we have 
    \begin{align*}
\vfk+(R_1\cap R_2\cap R_{\mathrm c}\hspace{-0.1em}\mid\hspace{-0.1em} B)\geq \vfk+(\mathsf{B\cup B'}\subset\mathtt {Closed}\hspace{-0.1em}\mid\hspace{-0.1em} B) \vfk+(R_1\cap R_2\cap R_{\mathrm c}\hspace{-0.1em}\mid\hspace{-0.1em} \mathsf{B\cup B'}\subset\mathtt{Closed}).
    \end{align*}
Recalling that $\mathsf{B\cup B'}\subset\mathtt{Closed}$ is an event measurable with respect to $\mathrm{Aux}$,  we see that it is independent of the event $R_1\cap R_2\cap R_{\mathrm c}$. Thus,
    \begin{align*}
\vfk+(R_1\cap R_2\cap R_{\mathrm c}\mid \mathsf{B\cup B'}\subset\mathtt{Closed})=\vfk+(R_1\cap R_2\cap R_{\mathrm c}).
    \end{align*}
By \eqref{eq-domination}, we have that
$$\vfk+(R_1\cap R_2\cap R_{\mathrm c})\geq \mbb P_{\hat \rho}(R_1\cap R_2\cap R_{\mathrm c})\geq \mathsf p_{\mathsf q}^{\mathsf{|S_1\cup S_2|}}\mbb P_{\hat \rho}(R_{\mathrm c})$$
where $\hat \rho$ is a Bernoulli percolation with parameter $\mathsf p_{\mathsf q} \to 1$ as $\mathsf q \to \infty$.
Since $\mbb P_{\hat \rho}(R_{\mathrm c}) \to 1$ as $\mathsf p_{\mathsf q} \to 1$, we may choose $\mathsf q_0$ large enough so that for all $\mathsf q \geq \mathsf q_0$
\begin{align*}
    \vfk+(R_1\cap R_{\mathrm c} \cap R_2 \mid  B)\geq (1+e^{-\mathsf c_{\mathrm g}\mathsf q/250})^{-|\mathsf B\cup \mathsf B'|}\times \mathsf p_{\mathsf q}^{(4\mathsf k+1)^3\mathsf{|B\cup B'|}}\mbb P_{\hat \rho}(R_{\mathrm c})\geq e^{-|\mathsf B\cup \mathsf B'|}.
\end{align*}
where we used Lemma~\ref{lem:blue-comes-from-closed} and the fact that $|\mathsf{S}_1\cup \mathsf{S}_2|\leq (4\mathsf k+1)^3|\mathsf {B\cup B'}|$. Since $$ \vfk+(R_1\cap R_{\mathrm c} \mid  R_2\cap B)\geq \vfk+(R_1\cap R_{\mathrm c} \cap R_2 \mid  B),$$ this completes the proof of the proposition.
\end{proof}

\section{Spatial Mixing}\label{sec:spatial-mixing}

In this section, we prove Proposition~\ref{prop:inside-M-event-different-theta}.
As we mentioned earlier, the main obstacle comes from the fact that the distance between $\mathsf{Q_{B\cup B'}}$ and $\mathtt E$ is just a large constant, which can be substantially smaller than the diameter of $\mathsf{Q_{B\cup B'}}$. In order to address this, we review a random partitioning scheme in Section~\ref{sec:partition} from which we obtain a deterministic partition of $\mathsf B \cup \mathsf B'$ with some nice properties as in Section~\ref{sec:partition-B-B-prime}. In Section~\ref{sec:prop-spatial-mixing-proof} we apply a spatial mixing result in each set of the aforementioned partitioning and the proof of  Proposition~\ref{prop:inside-M-event-different-theta} follows by putting together estimates in all these small sets.

    \subsection{Random partition of finite metric space}\label{sec:partition}

Partition of metric space plays an important role in the study of metric geometry and theoretical computer science (e.g., it is an important pre-step in applying the computing scheme of ``divide and conquer''). Usually, it would be useful to have most of the points in the ``interior'' of the sets in the partitioning, since in many applications points that are near the boundaries of the sets in the partitioning cannot be ``controlled''. It turns out that introducing randomness in the partitioning scheme is a powerful method to generate such desired partitions, and in what follows we review a result from 
\cite{CKR05}. 
    
    Let $(X,d)$ be a finite metric space. If $P$ is a partition of $X$, for every $x\in X$ we denote by $P(x)$ the unique set in $P$ containing $x$. The partition $P$ is called $R$-bounded if the diameter of each set in $P$ is at most $R$. We say a random partition $\mcc P$ is $R$-bounded if $\mcc P$ is supported on $R$-bounded partitions of $X$. For $x\in X$ and $r>0$, we let $B(x, r) = \{y\in X: d(x, y)\leq r\}$ be the ball of radius $r$ centered at $x$. For positive numbers $s<t$, we define a function $H$ by
    $H(s,t)=\sum_{n=s+1}^t\frac{1}{n}$.
    Thus, for every $n\geq 2$, $H(1,n)=O(\log n)$.
    
    The result of \cite{CKR05} (see also \cite[Section 2]{KLM05}) can be rephrased in the following way which would be more suitable for our application. For every $R>0$, there exists an $R$-bounded random partition $\mcc P$ of set $X$ with law $\mathrm{P}$  such that  for every $x\in X$ and $0<r<\frac{R}{8}$, 
    \begin{equation}\label{eq:random-partition-boundary-estimation}
        \mathrm{P}(B(x,r) \not\subset \mcc P(x))\leq \frac{8r}{R}H(|B(x,\frac{R}{8})|,|B(x,R)|)\,.
    \end{equation}

   \subsection{Partition of explored clusters from outmost blue boundary}\label{sec:partition-B-B-prime}

    In this subsection, we apply \eqref{eq:random-partition-boundary-estimation} to the metric space $(X, d)$ with $X = \mathsf B \cup \mathsf B'$ and $d = d_{\infty}$. Let $R = \mathsf q^4$. By \eqref{eq:random-partition-boundary-estimation} (applied with $r = 16$), there exists an $R$-bounded random partition $\mcc P$ of $(X, d)$ with law $\mathrm{P}$ such that for every $\mathsf x\in X$,
	    \begin{equation}\label{eq:partition-of-edges-delta}
	        \mathrm{P}(B(\mathsf x, 16)\not\subset \mcc P(\mathsf x))\leq\frac{128}{R}H(|B(\mathsf x,\frac{R}{8})|,|B(\mathsf x,R)|) \leq
            c_1\times \frac{\ln R}{R},
	    \end{equation}
	   where $c_1>0$ is some constant and in the last inequality we used $|B(\mathsf x,R)|\leq (2R+1)^3$. Let $\mathsf P^{\mathrm b} = \{\mathsf x\in X: B(\mathsf x, 16) \not\subset \mcc P(\mathsf x)\}$. By \eqref{eq:partition-of-edges-delta} we see that $\mathrm {E} |\mathsf P^{\mathrm b}| \leq c_1\times \frac{\ln R}{R} |X|$ and thus there exists a deterministic partition $\mcc P_*=\{\mathsf P_1,\cdots, \mathsf P_m\}$ for some $m\geq 1$ such that
\begin{equation}\label{eq-partition-P-boundary-small}
|\mathsf P^{\mathrm b}_*| \leq c_1\times \frac{\ln R}{R} |X|\,.
\end{equation}
For each $j$, we divide $\mathsf P_j$ into the disjoint union of $\mathsf P_j^{\mathrm b}$ and $\mathsf P_j^{\mathrm o}$ where
    $$\mathsf P_j^{\mathrm o} =\{\mathsf x\in \mathsf P_j: B(\mathsf x,16)\subset \mcc P_*(\mathsf x)\} \mbox{ and }
    \mathsf P_j^{\mathrm b}=\{\mathsf x\in \mathsf P_j: B(\mathsf x,16)\not\subset \mcc P_*(\mathsf x)\}\,.$$
   Let $\mathsf P_*^{\mathrm o}=\bigcup_{j=1}^m \mathsf P_j^{\mathrm o} = X \setminus \mathsf P_*^{\mathrm b}$. 
 In Section~\ref{sec:prop-spatial-mixing-proof}, we wish to apply the weak mixing property for each $\mathsf P_j^{\mathrm o}$  and for configurations on edges within $\mathsf P_*^{\mathrm b}$ we will just use an obvious lower bound that comes from  the finite energy property (this will not cause too much error thanks to \eqref{eq-partition-P-boundary-small}). To this end, let $\mathsf W_j = \mathrm{Ball}(\mathsf P_j^{\mathrm o}, 4)$ where we recall that $\mathrm{Ball}$ is the ball in $\Lambda_{\mathsf N}$ with respect to $\ell_\infty$-distance. We claim that $d_{\infty}(\mathsf W_i, \mathsf W_j) > 4$ for $i\neq j$; this is true since otherwise by the triangle inequality we have $d_\infty(\mathsf P_j^{\mathrm o}, \mathsf P_j^{\mathrm o}) \leq 12$ which is a contradiction.
 Writing $W_j = \mathsf Q_{\mathsf W_j}$, we see that $W_j$'s are disjoint from each other (since $\mathsf Q_{\mathsf u} \cap \mathsf Q_{\mathsf v} = \emptyset$ if $d_{\infty}(\mathsf u , \mathsf v) > 1$).

	\subsection{Proof of Proposition~\ref{prop:inside-M-event-different-theta}}\label{sec:prop-spatial-mixing-proof}

Recall that $\Omega_{\mathrm{Aux}}(\omega^\mathtt E)$ is the set of configurations $\omega$ such that $\omega|_\mathtt E=\omega^\mathtt E$ and $\mathsf v$ is bad (with respect to $\omega$) for  $\mathsf v\in \mathsf B\cup \mathsf B'$ with $\mathrm{Aux}_{\mathsf v}=1$.
So there is a natural bijection $\chi=\chi_{_{\mathrm{Aux},\omega^{\mathtt E},\tilde\omega^\mathtt E}}:\Omega_{\mathrm{Aux}}(\omega^\mathtt E)\to\Omega_{\mathrm{Aux}}(\tilde\omega^\mathtt E)$ so that $\chi(\omega)|_{\mathtt E} = \tilde \omega^{\mathtt E}$ and $\chi(\omega)|_{\mathtt E^c} = \omega|_{\mathtt E^c}$. Let $L = |\mathsf B \cup \mathsf B'|>0$. 
It suffices to show that the following holds as long as $\mathsf q$ is sufficiently large (recall the definition of $\Omega_{\mathrm{Aux}}(\omega^\mathtt E)$):  for any $\omega^*\in\Omega_{\mathrm{Aux}}(\omega^\mathtt E)$
\begin{equation}\label{eq-spatial-mixing-to-show}
    \fk+(\omega|_{{\mathsf{B\cup B'}}}=\omega^*|_{{\mathsf{B\cup B'}}}\Big{|} \omega|_\mathtt E=\omega^\mathtt E)\leq e^{L}\fk+(\omega|_{{\mathsf{B\cup B'}}}=\chi(\omega^*)|_{{\mathsf{B\cup B'}}}\Big{|} \omega|_\mathtt E=\tilde\omega^\mathtt E)\,.
\end{equation}
(Note that by the definition of $\chi$, for any $\omega^*\in \Omega_{\mathrm{Aux}}(\omega^\mathtt E)$, we have $\omega^*|_{{\mathsf{B\cup B'}}}=\chi(\omega^*)|_{{\mathsf{B\cup B'}}}$.) Here and in what follows, for a set $\mathsf A\subset \Lambda_{\mathsf N}$, we use $\omega|_{\mathsf A}$ to denote $\omega|_{\mathsf{Q_A}}$ for notation convenience.

Taking advantage of our partition $\mathcal P_*$, we write
\begin{align*}
 \fk+\left(\omega|_{\mathsf{B\cup B'}}\hspace{-0.1em}=\hspace{-0.1em}\omega^*|_{\mathsf{B\cup B'}}\big{|}\ \omega_\mathtt E=\omega^\mathtt E\right)\hspace{-0.1em}=\hspace{-0.1em}& \prod_{j=1}^m\fk+\left(\omega_{\mathsf P_j^{\mathrm o}}=\omega^*|_{\mathsf P_j^{\mathrm o}}\Big{|}\ \omega|_\mathtt E\hspace{-0.1em}=\hspace{-0.1em}\omega^\mathtt E,\omega|_{\bigcup_{s=1}^{j-1}\mathsf P_s^{\mathrm o}}=\omega^*|_{\bigcup_{s=1}^{j-1}\mathsf P_s^{\mathrm o}}\right)\\ 
 &\times\fk+\left(\omega|_{\mathsf P_*^{\mathrm b}}=\omega^*|_{\mathsf P_*^{\mathrm b}}\Big{|} \omega|_\mathtt E=\omega^\mathtt E,\omega|_{\mathsf P_*^{\mathrm o}}=\omega^*|_{\mathsf P_*^{\mathrm o}}\right). 
\end{align*}
A similar expression holds for $\fk+\left(\omega|_{\mathsf{B\cup B'}}=\chi(\omega^*)|_{\mathsf{B\cup B'}}\Big{|}\   \omega|_\mathtt E=\tilde\omega^\mathtt E\right)$. By the definition of $\chi$, we have $\omega|_{\mathtt E^c} = \chi(\omega)|_{\mathtt E^c}$. Since $W_i$'s are disjoint,
the conditioning $\omega|_\mathtt E=\omega^\mathtt E, \omega|_{\bigcup_{s=1}^{j-1}\mathsf P_s^{\mathrm o}} = \omega^*|_{\bigcup_{s=1}^{j-1}\mathsf P_s^{\mathrm o}}$ can be seen as a boundary condition on $\pare W_j$, and thus we may write
\begin{equation}
\begin{split}
     &\fk+\left(\omega|_{\mathsf{B\cup B'}}=\omega^*|_{\mathsf{B\cup B'}}\Big{|}\ \omega|_\mathtt E=\omega^\mathtt E\right)=
     \prod_{j=1}^m\fk+\left(\omega|_{\mathsf P_j^{\mathrm o}}=\omega^*|_{\mathsf P_j^{\mathrm o}}\Big{|}\  \xi_j\right) \times\fk+\left(\omega|_{\mathsf P_*^{\mathrm b}}=\omega^*|_{\mathsf P_*^{\mathrm b}}\Big{|}\  \xi\right);\\
    &\fk+\left(\omega|_{\mathsf{B\cup B'}}= \omega^*|_{\mathsf{B\cup B'}}\Big{|}\  \omega|_\mathtt E=\tilde\omega^\mathtt E\right)=
    \prod_{j=1}^m\fk+\left(\omega|_{\mathsf P_j^{\mathrm o}}=\omega^*|_{\mathsf P_j^{\mathrm o}}\Big{|}\  \tilde \xi_j\right) \times\fk+\left(\omega|_{\mathsf P_*^{\mathrm b}}=\omega^*|_{\mathsf P_*^{\mathrm b}}\Big{|}\  \tilde \xi\right)\,.\label{eq:conditional-prob-two-parts}
\end{split}
\end{equation}
Here  $\xi_j,\xi, \tilde \xi_j, \tilde \xi$ are the boundary conditions induced by the events we condition on (as we pointed out above).
By the finite energy property, there exist constants $c_2 = c_2(T)>0$  such that
\begin{equation}\label{eq:boundary-edges-estimate}
\frac{\fk+\left(\omega|_{\mathsf P_*^{\mathrm b}}=\omega^*|_{\mathsf P_*^{\mathrm b}}\Big{|}\  \xi\right)}{ \fk+\left(\omega|_{\mathsf P_*^{\mathrm b}}=\omega^*|_{\mathsf P_*^{\mathrm b}}\Big{|}\  \tilde\xi \right)}\leq e^{c_2 \mathsf q^3|\mathsf P_*^{\mathrm b}|}\leq \exp(\frac{4 c_1c_2\ln \mathsf q}{\mathsf q}L)\,,
\end{equation}
where the last inequality follows from \eqref{eq-partition-P-boundary-small} (recall $R=\mathsf q^4$).

It remains to compare the measures of $\omega|_{\mathsf P_j^{\mathrm o}}$ with boundary condition $\xi_j$ to that with boundary condition $\tilde \xi_j$.
For convenience of exposition, we take $j=1$ in what follows and we also drop $j$ from the subscript for simplicity of notation (e.g., we write $W = W_1$).
 
The comparison method is a modification of \cite[Section 3.2]{DCGR20}, where the ratio spatial mixing was proved for events supported on $\Lambda_N$ with boundary conditions on $\Lambda_{2N}^c$ (see \cite[Corollary 1.4]{DCGR20}). For any boundary condition $\xi$ of $\pare W$, we wish to construct a coupling $\phi_{p,W}^{\xi, \mathbf 1}$ on pairs of configurations in $W$ which we denote by $(\omega^{\xi},\omega^{\mathbf 1})$ with $\phi_{p,W}^{\xi}$ being the law of the first marginal and $\phi_{p,W}^{\mathbf 1}$ being the law of the second marginal (here $\mathbf 1$ stands for the wired boundary condition). Fix 
$l$ to be a large constant as in \cite{DCGR20}  so that the block $\Lambda_\ell$ (a block is a box of side length $2l$) is good with probability sufficiently close to 1.
(After choosing $l$, we will then choose $\mathsf q$ as a sufficiently large constant so $\mathsf q \gg l$.) Following \cite{DCGR20}, we say a block $Q_v\in \mcc Q_l(W)$ is \emph{very good} in $(\omega^{\xi},\omega^{\mathbf 1})$ if it is a good block for both $\omega^{\xi}$ and $\omega^{\mathbf 1}$ and in addition  $\omega^{\xi}|_{Q_v}=\omega^{\mathbf 1}|_{Q_v}$.

   The coupling of these two measures follows the same method of randomized algorithm as in \cite{DCGR20}. In what follows, the definition of $A_t$, $B_t$ and $C_t$ are exactly the same as in \cite[Page 902-903]{DCGR20}: $A_t$ is the set of blocks that have been sampled up to time $t$, $B_t$ is the set of blocks that need to be checked, and $C_t$ is the set of edges for which $(\omega_e^{\xi},\omega_e^{\mathbf 1})$ has been sampled before $t$. 
    The whole algorithm is exactly the same as in \cite{DCGR20} except that the initial condition for $B_0$ needs a slight adjustment due to the change of region we are considering (i.e., we consider $W$ instead of $\Lambda_n$): we set
    $$B_0 =\{Q_v\in \mcc Q_l(\lamn): Q_v\cap \partial_i W \neq \emptyset \}\,.$$
In one sentence, this randomized sampling algorithm explores clusters of $B_0$ of blocks which are not very good (one may see \cite{DCGR20} for a formal definition).

 Following \cite{DCGR20} we can similarly derive that for a constant $c_3 = c_3(T, l) > 0$,
        \begin{equation}\label{eq:algorithm-goes-into-O}
          \sup_{A\subset\{0,1\}^{E(\mathsf Q_{\mathsf P^\mathrm o})}}|\phi_{p,W}^{\xi}(A)-\phi_{p, W}^{\mathbf 1}(A)|\leq \phi_{p,W}^{\xi, \mathbf 1}(C_T\cap \mathsf Q_{\mathsf P^{\mathrm o}}\neq \emptyset )\leq \exp(-c_3 \mathsf q)\,.
        \end{equation}
      The following is the proof for \eqref{eq:algorithm-goes-into-O} as argued in  \cite{DCGR20}: when $C_T\cap \mathsf Q_{\mathsf P^{\mathrm o}} \neq \emptyset$, there must exist a sequence of disjoint and not very good blocks $Q_{v_1},\cdots, Q_{v_s}$ (used by the former algorithm at times $t_1,\cdots, t_s$) such that the two blocks which are neighboring in the sequence has   $\ell_\infty$-distance at most $3l$. Therefore, \eqref{eq:algorithm-goes-into-O} follows from a union bound thanks to \eqref{eq-domination} and \cite[Proposition 1.5]{DCGR20}. This gives the weak mixing property as well as the property of exponentially
bounded controlling regions. By \cite[Theorem 3.3]{Alexander98}, we have for $c_4 = c_4(l, T)>0$ and for every $j$,
    $$|\fk+(\omega|_{\mathsf P_{j}^{\mathrm o}}=\omega^*|_{\mathsf P_{j}^{\mathrm o}}\mid \xi_j)-\fk+(\omega|_{\mathsf P_{j}^{\mathrm o}}=\omega^*|_{\mathsf P_{j}^{\mathrm o}} \mid \tilde \xi_j)|\leq e^{-c_4 \mathsf q} \fk+(\omega|_{\mathsf P_{j}^{\mathrm o}}=\omega^*|_{\mathsf P_{j}^{\mathrm o}}\mid \xi_j)\,.$$
    Combined with  \eqref{eq:conditional-prob-two-parts} and \eqref{eq:boundary-edges-estimate} then gives for $c_5 = c_5(l, T)>0$: 
    \begin{equation}
 \frac{\fk+(\omega|_{\mathsf{B\cup B'}}=\omega^*|_{\mathsf{B\cup B'}}\Big{|} \omega_\mathtt E=\omega^\mathtt E)}{\fk+(\omega|_{\mathsf{B\cup B'}}= \chi (\omega^*)|_{\mathsf{B\cup B'}}\Big{|} \omega_\mathtt E=\tilde\omega^\mathtt E)}\leq \exp(\frac{c_5\ln \mathsf q}{\mathsf q} L + c_5 L e^{-c_4 \mathsf q})\,.
    \end{equation}
    Taking $\mathsf q$ large enough then yields \eqref{eq-spatial-mixing-to-show} as required.

	\section{Localizing influence from disorder}\label{sec:localization-disorder}
	
	The main goal of this section is to prove Proposition~\ref{prop:inside-M-event-h-and-no-h} and Lemma~\ref{lem:flip-external-field-estimation}, for which the underlying intuition is that the influence from disorder mainly comes from disorder in a thin shell around $\mathsf B\cup \mathsf B'$. While this may be intuitive, the proof is not obvious. The major conceptual challenge is to show that the influence from disorder far away from $\mathsf B \cup \mathsf B'$ will not propagate via the interaction of FK configurations. Our definition of good boxes as well as the choice of $(\mathsf B, \mathsf B')$ is exactly for this purpose. We first prove Proposition~\ref{prop:inside-M-event-h-and-no-h}  in Section~\ref{sec:proof-prop-inside-M-event-h-and-no-h}, and then in Section~\ref{sec:proof-lem-flip-external-field-estimation} we prove Lemma~\ref{lem:flip-external-field-estimation} by a similar argument.

\subsection{Proof of Proposition~\ref{prop:inside-M-event-h-and-no-h}}\label{sec:proof-prop-inside-M-event-h-and-no-h}
Recall that $\mathtt E$ denotes the edges within
$\mathsf Q_{\Lambda_{\mathsf N}\setminus (\mathsf B \cup \mathsf B' \cup \mathsf S_1)}$ and recall \eqref{eq-def-Omega-B-B-prime-E}, \eqref{eq-def-Omega-omega-E} and \eqref{eq-def-Omega-Aux} for the definition of $\Omega_{\mathsf B, \mathsf B'}^{ \mathtt E}$, $\Omega(\omega^{\mathtt E})$ and $\Omega_{\mathsf {Aux}}(\omega^\mathtt E)$. We have the following expressions:
	\begin{align}
		&\fkf+{\tau_\omega(h)}(\Omega_{\mathrm{Aux}}(\omega^{\mathtt E})\mid\omega|_{\mathtt E}= \omega^\mathtt E)
		=
		\frac{\sum_{\omega\in \Omega_{\mathrm{Aux}}(\omega^{\mathtt E})}\fkf+{\tau_{\omega}(h)} {(\omega)}}
		{\sum_{\omega\in \Omega(\omega^{\mathtt E})}\fkf+{\tau_{\omega}(h)} (\omega)}\,,\label{eq:fraction-with-h}
	\end{align}
	which also holds for the version of $\fkf+{h}$ as well as for the special case of $\epsilon = 0$.
	Recall \eqref{eq-def-random-cluster} and \eqref{eq-def-FK-external-field} and note that the partition function (i.e., the normalizing factors in \eqref{eq-def-random-cluster} and \eqref{eq-def-FK-external-field}) get cancelled in the fraction in \eqref{eq:fraction-with-h}. 
    For $\omega^{\mathtt E} \in \Omega^{\mathtt E}_{\mathsf B, \mathsf B'}$, let $\mathfrak C^{\mathtt E} = \mathfrak C^{\mathtt E}(\omega^{\mathtt E})$ be the collection of $\omega^{\mathtt E}$-clusters on $\mathtt E$, and let $\mathcal C_*^{\mathtt E} \in \mathfrak C^{\mathtt E}$ be the cluster that is connected to $\pare \Lambda_N$, and recall $C^\mathtt E_\diamond$ (note that $C^{\mathtt E}_\diamond$ may or may not equal to $\mathcal C_*^{\mathtt E}$). Define
$$\mathrm{Ratio}(\omega^{\mathtt E}, h) = \exp(\epsilon(\tau_{\omega^\mathtt E}(h))_{\mathcal C_*^{\mathtt E}}/T)\prod_{\mathcal C^\mathtt E\in \mathfrak C^{\mathtt E}\setminus \{\mathcal C_*^{\mathtt E}\}} \cosh (\epsilon(\tau_{\omega^\mathtt E}(h))_{\mathcal C^\mathtt E}/T) \,,$$
which is well-defined since $\tau_{\omega}$ is a function of $\omega^\mathtt E$ (thus we write the heavier notation of $\tau_{\omega^\mathtt E}$ in this section to emphasize this point). 
For $\omega\in \Omega_{\mathrm{Aux}}(\omega^{\mathtt E})$, let $\mathfrak C = \mathfrak C(\omega)$ be the collection of $\omega$-clusters. In addition, let $\mathcal C_*$ be the $\omega$-cluster of $\pare \Lambda_N$, and let $\mcc C_\diamond \supset C^{\mathtt E}_\diamond$ be the $\omega$-cluster of $C^{\mathtt E}_\diamond$. Define
	$$\mathrm{Ratio}(\omega, h) = \frac{\parf {\tau_{\omega^\mathtt E}(h)} \fkf+{\tau_{\omega^\mathtt E}(h)} {(\omega)}}{\part \fk+{(\omega)}} = \exp(\epsilon(\tau_{\omega^\mathtt E}(h))_{\mathcal C_*}/T) \prod_{\mathcal C\in \mathfrak C \setminus \{\mathcal C_*\}} \cosh(\epsilon(\tau_{\omega^\mathtt E}(h))_{\mathcal C}/T)\,.$$
	The key challenge in comparing the above two ratios lies in controlling the cosine hyperbolic terms that cannot be cancelled. In order to address this, recall Definition~\ref{def-B-B-prime-U} and \eqref{eq-def-C-from-U}, and write $\mathtt C = \cup_{i=1}^n C_i$.
 Let $\Psi: \mathfrak C^{\mathtt E} \mapsto \mathfrak C$ so that $\mathcal C^\mathtt E \subset \Psi(\mathcal C^\mathtt E)$ for all $\mathcal C^\mathtt E\in \mathfrak C^{\mathtt E}$. For $\mathcal C\in \mathfrak C$, let $\cup \Psi^{-1}(\mathcal C) = \cup_{\mathcal C^\mathtt E \in \Psi^{-1}(\mathcal C)} \mathcal C^{\mathtt E}$ and let $\mathrm{Diff}(\mathcal C) = \mathcal C \setminus \cup\Psi^{-1}(\mathcal C)$ be the difference between $\mathcal C$ and $\cup\Psi^{-1}(\mathcal C)$.

For notation convenience, write $\mathrm{Shell} = \mathsf Q_{\mathrm{Ball}(\mathsf B \cup \mathsf B', 4\mathsf k)}$. 
The following are some basic facts to be used in our proof.

First, for $\omega^\mathtt E\in \Omega^{\mathtt E}_{\mathsf B, \mathsf B'}$, by \eqref{eq-red-surounding} and \eqref{eq-R-to-Lambda-N} (as well as the definition of good box) we see that for all $\omega \in \Omega_{\mathrm{Aux}}(\omega^\mathtt E)$
\begin{equation}\label{eq-structure-C-*}
\mathrm{Diff}(\mathcal C_\diamond)\setminus \mathtt C \subset \mathrm{Shell}\,.
\end{equation}
In the case that $\mcc C^{\mathtt E}_*\cap  C^{\mathtt E}_\diamond=\emptyset$, recalling the fact that $\partial_{\mathsf k}\mathsf U_*$ is a connected set with all points good, we see that for all $\omega\in \Omega(\omega^{\mathtt E})$,
\begin{equation}\label{eq:structure-C-*-in-U-*}
    \mcc C_*\subset  \psi(\partial_{\mathsf k}\mathsf U_*)^c\ ,\quad  \mcc C_*=\mcc C_*^{\mathtt E} \quad \mbox{ and  }\quad \mcc C_*\cap \mcc C_\diamond=\emptyset\,.
\end{equation}
This is because otherwise $\mcc C_*$ passes through $\mathsf Q_{\partial_\mathsf k \mathsf U_*}$ and thus is connected to $C^\mathtt E_\diamond$. In the other case that $\mcc C^{\mathtt E}_*=  C^{\mathtt E}_\diamond$ (that is, $C^\mathtt E_\diamond$ is connected to $\pare \lamn$), by \eqref{eq-structure-C-*} we have $\mathrm{Diff}(\mathcal C_*)\setminus \mathtt C \subset \mathrm{Shell}$.

Next, we claim that for $\mathcal C\in \mathfrak C$ with $\mathcal C\neq \mathcal C_\diamond$,
\begin{equation}\label{eq:structure-of-C}
    \mathrm{Diff}(\mathcal C)\subset \mathrm{Shell}.
\end{equation}
For the purpose of verifying \eqref{eq:structure-of-C} and for purpose of analysis later, we divide $\mathcal C\in \mathfrak C$ with $\mathcal C\neq \mathcal C_\diamond$ into the following three cases:
\begin{enumerate}[(i)]
    \item $\mcc C\subset U_i$ for some $i=1,\cdots, n,*$ or $\mcc C\subset (\cup U_i\cup U_*)^c$;
    \item $\mcc C\cap \mathtt C\neq \emptyset$ and $\mcc C$ is not in Case (i);
    \item $\mcc C \cap \mathtt C =  \emptyset$ and $\mcc C$ is not in Case (i).
\end{enumerate}
We now verify \eqref{eq:structure-of-C} in Case (i). If the former occurs we have $\mathrm{Diff}(\mathcal C)=\emptyset$; if the latter occurs we have $\mathrm{Diff}(\mathcal C)=\mathcal C\subset \mathsf{Shell}$. The verification of \eqref{eq:structure-of-C} in Cases (ii) and (iii) follow from the same argument (we separate these two cases for later analysis):  By the assumption that this is not in Case (i), we see that $\mathcal C\setminus \mathtt C$ can be decomposed into components each of which intersects with $\mathsf Q_{\partial_{\mathsf k}\mathsf U_j}$ for some $j=1, \ldots, n, *$. By \eqref{eq-def-C-from-U}, all these components have diameters at most $\mathsf q$, yielding \eqref{eq:structure-of-C}.

The following equality is obvious:
\begin{equation}\label{eq:sum-of-inverse-image}
    \sum_{\mcc C\in \mathfrak C}|\Psi^{-1}(\mcc C)\cap \{C_1, \ldots, C_n\}|=n.
\end{equation}

For  $\omega^{\mathtt E}$ with $C^{\mathtt E}_\diamond \leftrightarrow \pare \lamn$, we have $\mathcal C^{\mathtt E}_*= C_\diamond^{\mathtt E}$ and $\mcc C_*=\mcc C_\diamond$. By \eqref{eq-def-flipping}, we have $(\tau_{\omega^\mathtt E}(h))_{C_i} \geq 0$ for $i=1, \ldots, n$. Combined with \eqref{eq-structure-C-*} and \eqref{eq:sum-of-inverse-image}, it yields  that (in what follows we write $I_* = \{1\leq i\leq n: C_i \subset \mathcal C_*\}$ and thus $|I_*| = |\Psi^{-1}(\mcc C_*) \cap \{C_1, \ldots, C_n\}|$)
    \begin{align}
    &\big|\frac{\epsilon(\tau_{\omega^\mathtt E}(h))_{\mathcal C_*}}{T} - \frac{\epsilon(\tau_{\omega^\mathtt E}(h))_{\mathcal C^{\mathtt E}_*}}{T} - \sum_{\mathcal C^{\mathtt E} \in \Phi^{-1}(\mathcal C_*)\setminus \{\mathcal C_*^{\mathtt E}\}} \log  \cosh \frac{\epsilon(\tau_{\omega^\mathtt E}(h)_{\mathcal C^\mathtt E}}{T}\big| \nonumber\\
    \leq& \sum_{i\in I_*}|\frac{\epsilon(\tau_{\omega^\mathtt E}(h))_{C_i}}{T} - \log  \cosh \frac{\epsilon(\tau_{\omega^\mathtt E}(h)_{C_i}}{T}| + c_6\epsilon\sum_{v\in \mathrm{Shell} \cap \mathcal C_*} |h_v| \nonumber\\
    \leq & c_7  |I_*|+ c_6\epsilon\sum_{v\in \mathrm{Shell} \cap \mathcal C_*} |h_v|\,, \label{eq-control-C-*-Diff}
    \end{align}
    where $c_6 =c_6(T)>0$ and $c_7 = c_7(T)>0$ are constants. We now explain the first inequality in \eqref{eq-control-C-*-Diff}. Set $\hat h$ such that $\hat h_v = 0$ for all $v\in \mathrm{Shell \cap \mathcal C_*}$ and $\hat h_v = h_v$ otherwise. Then we can replace $h$ by $\hat h$ in the target difference for an approximation, and by \eqref{eq-structure-C-*} this approximation has an error upper-bounded by $c_6\epsilon\sum_{v\in \mathrm{Shell} \cap \mathcal C_*} |h_v|$. In addition, the target difference with $h$ replaced by $\hat h$ is upper-bounded by $ \sum_{i\in I_*}|\frac{\eps(\tau_{\omega^\mathtt E}(h))_{C_i}}{T} - \log  \cosh \frac{\epsilon(\tau_{\omega^\mathtt E}(h)_{C_i}}{T}| \leq c_7 |I_*|$. By the triangle inequality, this verifies the first inequality in \eqref{eq-control-C-*-Diff}.

For  $\omega^{\mathtt E}$ with $C^{\mathtt E}_\diamond \nleftrightarrow \pare \lamn$, we have $\mathcal C^{\mathtt E}_*\cap \mcc C_\diamond^{\mathtt E}=\emptyset=\mcc C_* \cap \mcc C_\diamond$. Since in this case $\mathcal C_* = \mathcal C^\mathtt E_*$, we have
    \begin{equation}\label{eq:control-c*-diff-2}
        \big|\frac{\epsilon(\tau_{\omega^\mathtt E}(h))_{\mathcal C_*}}{T} - \frac{\epsilon(\tau_{\omega^\mathtt E}(h))_{\mathcal C^{\mathtt E}_*}}{T}\big|=0\,.
    \end{equation}
In addition,  by \eqref{eq-def-flipping} we have for $\omega^{\mathtt E}$ with $C^{\mathtt E}_\diamond \nleftrightarrow \pare \lamn$
\begin{equation}\label{eq-signs-disorder-C-i}
\mbox{$(\tau_{\omega^{\mathtt E}}(h))_{C_i}$'s have the same sign with $h_{C^\mathtt E_\diamond}$ for all $i = 1, 
\ldots, n$.}
\end{equation}
Therefore, by a similar derivation to \eqref{eq-control-C-*-Diff} we have (below we may increase the value of $c_6, c_7$ if necessary)
\begin{align}\label{eq:control-c-e-diff}
    &\big|\log  \cosh \frac{\epsilon(\tau_{\omega^\mathtt E}(h))_{\mathcal C_\diamond}}{T} -\sum_{\mathcal C^{\mathtt E} \in \Phi^{-1}(\mathcal C_\diamond)} \log \cosh \frac{\epsilon(\tau_{\omega^\mathtt E}(h)_{\mathcal C^\mathtt E}}{T}\big|  \nonumber \\
    &\leq c_7|\{C_1, \ldots, C_n\} \cap \Psi^{-1}(\mathcal C_\diamond)|+c_6\epsilon \sum_{v\in \mathrm{Shell} \cap \mathcal C_\diamond} |h_v|.
    \end{align}
 Furthermore, for $\mathcal C\in \mathfrak C \setminus \{\mathcal C_\diamond, \mathcal C_*\}$, we claim that
    \begin{align}\label{eq-control-C-Diff}
   & |\log \cosh(\epsilon(\tau_{\omega^\mathtt E}(h)_{\mathcal C}/T) - \sum_{\mathcal C^{\mathtt E}\in \Phi^{-1}(\mathcal C)}\log \cosh (\epsilon(\tau_{\omega^\mathtt E}(h)_{\mathcal C^\mathtt E}/T) | \nonumber\\ 
    & \leq c_7 |\Psi^{-1}(\mathcal C)\cap \{C_1, \ldots, C_n\}| + c_6 \epsilon \sum_{v\in \mathrm{Shell} \cap \mathcal C} |h_v|\,.
    \end{align} 
 In order to see \eqref{eq-control-C-Diff}, we recall the three cases in verifying \eqref{eq:structure-of-C}. In Case (i), the above difference is 0.
    In Case (ii), we use a derivation as for \eqref{eq-control-C-*-Diff}. We have
    \begin{align*}
    &\big |\log \cosh(\epsilon(\tau_{\omega^\mathtt E}(h))_{\mathcal C}/T) - |\epsilon(\tau_{\omega^\mathtt E}(h))_{\mathcal C}/T| \big|\\
    & + \sum_{\mathcal C^{\mathtt E}\in \Phi^{-1}(\mathcal C)} \big|\log\cosh(\epsilon(\tau_{\omega^\mathtt E}(h))_{\mathcal C^\mathtt E}/T) - |\epsilon(\tau_{\omega^\mathtt E}(h))_{\mathcal C^\mathtt E})/T| \big|\end{align*} is upper-bounded by $\frac{c_6}{2} \epsilon \sum_{v\in \mathrm{Shell} \cap \mathcal C} |h_v|+ \frac{c_7}{2}|\Phi^{-1}(\mathcal C)\cap \{C_1, \ldots, C_n\}|$. In addition, by \eqref{eq-signs-disorder-C-i} we have $$\big||\epsilon(\tau_{\omega^\mathtt E}(h))_{\mathcal C}/T| - \sum_{\mathcal C^{\mathtt E}\in \Phi^{-1}(\mathcal C)}|\epsilon(\tau_{\omega^\mathtt E}(h))_{\mathcal C^\mathtt E})/T|\big| \leq\frac{c_6}{2} \epsilon \sum_{v\in \mathrm{Shell} \cap \mathcal C} |h_v|\,.$$ Altogether, gives \eqref{eq-control-C-Diff}.
   In Case (iii), the inequality holds since $\mathcal C \subset \mathrm{Shell}$ by \eqref{eq:structure-of-C}. This completes the verification of \eqref{eq-control-C-Diff}.

    Combining \eqref{eq-control-C-*-Diff}, \eqref{eq:control-c*-diff-2}, \eqref{eq:control-c-e-diff}, \eqref{eq:sum-of-inverse-image}and \eqref{eq-control-C-Diff}, we see that
    $$|\log \mathrm{Ratio}(\omega^{\mathtt E}, h) - \log \mathrm{Ratio}(\omega, h)| \leq c_7 n + c_6\epsilon \sum_{v\in \mathrm{Shell}} |h_v|\,.$$
    Combined with Lemma~\ref{lem:comparez}, \eqref{eq:number-of-U} and \eqref{eq:fraction-with-h}, this completes the proof of Proposition~\ref{prop:inside-M-event-h-and-no-h}.
	
\subsection{Proof of Lemma~\ref{lem:flip-external-field-estimation}}\label{sec:proof-lem-flip-external-field-estimation}

We continue to use notations from the previous subsection. We have that for $\omega^{\mathtt E} \in \Omega^{\mathtt E}_{\mathsf B, \mathsf B'}$ and $\omega\in \Omega_{\mathrm{Aux}}(\omega^{\mathtt E})$
\begin{align}
&\log (\vfkf+{h}(\omega)\parz) -    \log(\vfkf+{\tau_{\omega}(h)}(\omega) \parf {\tau_{\omega(h)}}) \nonumber\\
& = \epsilon \frac{h_{\mathcal C_*} - (\tau_{\omega^\mathtt E}(h))_{\mathcal C_*}}{T} + \sum_{\mathcal C\in \mathfrak C\setminus \{\mathcal C_*\}} (\log \cosh (\epsilon h_{\mathcal C}/T) - \log \cosh (\epsilon \tau_{\omega^\mathtt E}(h)_{\mathcal C}/T)\,. \label{eq-log-difference-flip-disorder}
\end{align}

By \eqref{eq-def-flipping}, when $\mcc C_*\neq \mcc C_\diamond$ (so $\mcc C^\mathtt E_* \neq C^\mathtt E_\diamond$), 
\begin{equation}\label{eq-contro-C-*-Diff-lemma-0}
    h_{\mathcal C_*} - (\tau_{\omega^\mathtt E}(h))_{\mathcal C_*}=0\,.
\end{equation}
In addition, when $\mcc C_*=\mcc C_\diamond$, by \eqref{eq-def-flipping} we have  $h_{\mcc C_*\cap C_i}-((\tau_{\omega^\mathtt E}(h))_{\mathcal C_*\cap C_i}\leq 0$, for each $i=1,\cdots, n$. Therefore, by \eqref{eq-structure-C-*} we have
\begin{equation}\label{eq-control-C-*-Diff-lemma}
    h_{\mathcal C_*} - (\tau_{\omega^\mathtt E}(h))_{\mathcal C_*}\leq \sum_{v\in \mathrm{Shell} \cap \mathcal C_*} |h_v|\,.
\end{equation}
For $\mathcal C \neq \mathcal C_*$, we claim that
\begin{align}\label{eq-control-C-Diff-lemma}
\log \cosh (\eps h_{\mathcal C}/T) - \log \cosh (\eps\tau_{\omega^\mathtt E}(h)_{\mathcal C}/T)
 \leq c_8\epsilon \sum_{v\in \mathrm{Shell} \cap \mathcal C} |h_v|\,,
\end{align}
where $c_8 = c_8(T) >0$ is a constant. We now verify \eqref{eq-control-C-Diff-lemma} and we recall the three cases in verifying \eqref{eq:structure-of-C}.  In Case (i) the above difference is 0. In Case (ii),  by \eqref{eq-signs-disorder-C-i} we can deduce \eqref{eq-control-C-Diff-lemma} from \eqref{eq-structure-C-*} and \eqref{eq:structure-of-C}.
In Case (iii), we deduce \eqref{eq-control-C-Diff-lemma} from \eqref{eq-structure-C-*} and \eqref{eq:structure-of-C}. This completes the verification of \eqref{eq-control-C-Diff-lemma}. Plugging \eqref{eq-control-C-*-Diff-lemma} and \eqref{eq-control-C-Diff-lemma} into \eqref{eq-log-difference-flip-disorder}, we complete the proof of the lemma.

\medskip

\noindent {\bf Acknowledgement.} We warmly thank Subhajit Goswami, Jianping Jiang, Jian Song, Rongfeng Sun and Zijie Zhuang for helpful discussions.

\end{document}